\numberwithin{equation}{section}
\numberwithin{figure}{section}
\newtheorem{theorem}{Theorem}[section]
\newtheorem{lemma}[theorem]{Lemma}
\newtheorem{proposition}[theorem]{Proposition}
\newtheorem{corollary}[theorem]{Corollary}
\newtheorem{remark}[theorem]{Remark}
\newtheorem{example}[theorem]{Example}
\newtheorem{question}[theorem]{Question}
\theoremstyle{definition}
\newtheorem{definition}[theorem]{Definition}
\newcommand{\C}{{\mathbb{C}}}
\newcommand{\Z}{{\mathbb{Z}}}
\newcommand{\R}{{\mathbb{R}}}
\newcommand{\N}{{\mathbb{N}}}
\renewcommand{\P}{{\mathbb{P}}}
\newcommand{\F}{{\mathbb{F}}}
\renewcommand{\t}{\mathfrak{t}}
\newcommand{\g}{\mathfrak{g}}
\renewcommand{\b}{\mathfrak{b}}
\newcommand{\into}{\hookrightarrow}
\newcommand\lie[1]{{\mathfrak #1}}
\DeclareMathOperator{\Lie}{Lie}
\DeclareMathOperator{\image}{image}
\DeclareMathOperator{\Sym}{Sym}
\DeclareMathOperator{\pt}{pt}
\DeclareMathOperator{\supp}{supp}
\newcommand{\hsm}{{\hspace{1mm}}}
\newcommand{\Flags}{\mathcal{F}\ell ags}
\begin{document}

\title{Poset pinball, GKM-compatible
  subspaces, 
  and Hessenberg varieties}

\author{Megumi Harada}
\address{Department of Mathematics and
Statistics\\ McMaster University\\ 1280 Main Street West\\ Hamilton, Ontario L8S4K1\\ Canada}
\email{Megumi.Harada@math.mcmaster.ca}
\urladdr{\url{http://www.math.mcmaster.ca/Megumi.Harada/}}
\thanks{The first author is partially supported by an NSERC Discovery Grant,
an NSERC University Faculty Award, and an Ontario Ministry of Research
and Innovation Early Researcher Award.  The second author is partially supported by 
 NSF grant DMS-0801554, a Sloan Research Fellowship, and an Old Gold Research Fellowship.}

 \author{Julianna Tymoczko}
 \address{Department of Mathematics, University of Iowa, 14 MacLean
 Hall, Iowa City, Iowa 52242-1419, U.S.A.}
 \email{tymoczko@uiowa.edu}
\urladdr{\url{http://www.math.uiowa.edu/~tymoczko/}}

\keywords{equivariant cohomology and localization, Goresky-Kottwitz-MacPherson theory,
  graded partially ordered sets, nilpotent Hessenberg varieties,
  Springer theory} 
\subjclass[2000]{Primary: 55N91; Secondary: 22E46, 14L30}

\date{\today}


\begin{abstract}

This paper has three main goals. First, we set up a general framework
to address the problem of constructing module bases for the equivariant cohomology of
certain subspaces of GKM spaces. To this end we introduce the notion
of a \textbf{GKM-compatible} subspace of an ambient GKM space. We 
also discuss \textbf{poset-upper-triangularity}, a key combinatorial
notion in both GKM theory and more generally in localization theory in
equivariant cohomology. With
a view toward other applications, we present
parts of our setup in a general algebraic and combinatorial
framework. Second, motivated by our central problem of building module
bases, we introduce a combinatorial game which we dub
\textbf{poset pinball} and illustrate with several examples. Finally,
as first applications, 
we apply the perspective of GKM-compatible subspaces and poset pinball
to construct explicit and computationally convenient module bases for
the $S^1$-equivariant cohomology of all Peterson varieties of
classical Lie type, and subregular Springer varieties of Lie type
$A$. In addition, in the Springer case we use our module basis to lift the classical Springer representation on the
ordinary cohomology of subregular Springer varieties to $S^1$-equivariant cohomology in Lie type $A$.

\end{abstract}

\maketitle

\setcounter{tocdepth}{1}
\tableofcontents

\section{Introduction}\label{sec:intro}

This manuscript has three main goals. First, we develop a general
framework and perspective to construct
computationally convenient module bases for the equivariant cohomology
of certain spaces equipped with group actions. In particular, we
introduce the notion of a \textbf{GKM-compatible subspace} of an
ambient GKM space. While not themselves GKM spaces, GKM-compatible subspaces allow
us to exploit the combinatorial advantages
of GKM theory applied to the ambient GKM space.   We primarily use  Borel-equivariant
cohomology with field coefficients, but expect future applications in
other generalized
equivariant cohomology theories.  For this reason, we present part of this framework
in an abstract algebraic 
setting, formalizing algebraic properties
of the equivariant cohomology of GKM spaces in the language of
submodules of product modules indexed by a graded partially ordered
set. We also discuss the crucial notion of
\textbf{poset-upper-triangular} subsets of a module, and give one
possible answer to a question of Billey's by providing examples of
topological spaces with no combinatorially-natural
poset-upper-triangular basis. 

Second, we
introduce a combinatorial game we call \textbf{poset
  pinball}. The game is designed to address some of the difficulties
which arise in the analysis of the equivariant cohomology of
GKM-compatible subspaces, but the game itself is purely combinatorial
and does not depend on the motivating geometry.

Third, as applications, we use the above theory
to describe some \textbf{nilpotent Hessenberg varieties}, which are
a rich class of algebraic varieties arising in geometric
representation theory.  GKM theory does not directly
apply to nilpotent Hessenberg varieties but our
methods do.  We first
prove in Theorem~\ref{theorem:Hessies are GKM-compatible} that the \textbf{nilpotent
    Hessenberg varieties $\mathcal{H}(N,H)$} in Lie type $A$, and in
  general Lie type given an extra condition on the parameter $N$, are
 GKM-compatible subspaces of the flag
  variety. 
Then in Theorem~\ref{theorem: classical basis for
    Peterson} we use poset pinball to construct explicit
  module bases for the $S^1$-equivariant
cohomology rings of \textbf{Peterson varieties in all classical Lie
  types}; this generalizes earlier work in the Lie type $A$ case
\cite{HarTym09}. 
Similarly, we use poset pinball in Theorem~\ref{theorem: pinball for subregular} to construct module bases for the
  $S^1$-equivariant cohomology of \textbf{subregular Springer varieties in
Lie type $A$} (also studied by Slodowy \cite{Slo80}). We then 
use this `poset pinball basis' and Kostant-Kumar's 
  $S_n$-action on $H^*_T(\Flags(\C^n);\C)$ to explicitly construct in
Corollary~\ref{corollary:equivariant Springer rep} a new geometric
representation of $S_n$ on $H^*_{S^1}(\mathcal{S}_N;\C)$ and also 
prove that it lifts the well-known Springer representation on
the ordinary cohomology of subregular Springer varieties to $S^1$-equivariant cohomology.

Our work develops out of GKM theory, named for the
influential manuscript of Goresky-Kottwitz-MacPherson \cite{GKM}. If
$X$ is a suitable $G$-space, GKM theory gives a combinatorial
description of the generalized equivariant cohomology ring $E^*_G(X)$
via restriction to $E^*_G(X^G)$.  If the $G$-space $X$ has isolated
fixed points, there often exists a computationally convenient module
basis for $E^*_G(X)$; a classical example is the set of
\emph{(equivariant) Schubert classes} $\{\sigma_w\}_{w \in W}$ which
form a basis for $H^*_T(G/B)$ where $G$ is a reductive complex
algebraic group and $B$ is a Borel subgroup. Current research in
equivariant topology (especially Schubert calculus) frequently
exploits combinatorial properties of these module bases to obtain
topological information, e.g. the product structure of equivariant
cohomology rings (see for instance \cite{KnuTao03, Kre05, LakRagSan06,
  Wil06, IkeNar09}). However the so-called ``GKM conditions'' which
guarantee that GKM theory applies to a $G$-space $X$ are
stringent 
(see Section~\ref{subsec:background-GKM}), so the theory is restricted
in scope. 
On the other hand, many topological spaces are subspaces of a $G$-space $X$ for which the GKM package holds, since for instance we may take $X= \mathbb{P}^n$ with the standard torus action.  
Definition~\ref{def:GKM compatible} introduces the notion of
a \textbf{GKM-compatible subspace} $Y$ of the $G$-space $X$, equipped with the action of a subgroup $H \subseteq G$.  We show that we can use GKM theory
on the ambient space $X$ in order to draw conclusions about the
$H$-equivariant topology of $Y$.  For example, when $G=T$ is a torus and $H = S$ is a subtorus, we
present two concrete constructions of combinatorial bases for
$H^*_{S}(Y)$ given a suitable basis for $H^*_T(X)$:
Proposition~\ref{prop:pinball-betti} gives a {\em poset pinball basis}
and Theorem~\ref{theorem:matching} gives a {\em matching basis}.

One of the goals of this manuscript is to formalize some
of the features of GKM theory 
into purely algebraic and combinatorial
terms. We believe that the separation of the algebra and combinatorics
from the specifics of the geometry serves to clarify some of the issues involved. 
We start with a poset $\mathcal{I}$ satisfying
conditions arising naturally in
geometric applications. 
We then place the GKM description of equivariant cohomology rings in
the more general algebraic setting of a submodule $M$ of a product
module $\prod_{i \in \mathcal{I}} M_i$ whose factors are indexed by
the poset $\mathcal{I}$.  We also discuss one of the core notions of
this manuscript, namely \textbf{poset-upper-triangularity}, defined
precisely in Definition~\ref{def:poset-upper-triangular}.  Roughly, a
subset $\{x_{\alpha}\}_{\alpha \in A} \subset \prod_{i \in
  \mathcal{I}} M_i$ is poset-upper-triangular if for each $\alpha$
there exist distinct $i_\alpha \in \mathcal{I}$ such that
\(x_\alpha(j) = 0\) for all \(j \not \geq i_\alpha\) in the poset. In
many contexts, geometric classes in equivariant cohomology give rise to
poset-upper-triangular subsets, like the equivariant Schubert classes
for flag varieties, or more generally cohomology classes obtained from
Morse flows with respect to moment maps on a symplectic manifold. We
present in Theorem~\ref{theorem:poset-upper-triangular conditions} more general circumstances under which
poset-upper-triangular module bases exist for Borel-equivariant
cohomology.  In this algebraic formalism, the analogue of a
GKM-compatible subspace $Y$ of $X$ is a subset $\mathcal{J} \subseteq
\mathcal{I}$ of an ambient poset $\mathcal{I}$
and a homomorphism $\prod_{i \in \mathcal{I}} M_i \to \prod_{j \in
  \mathcal{J}} M'_j$ which is zero on the
factors $i \in \mathcal{I}$ with $i \not \in \mathcal{J}$. 
Our central problem, recorded in an algebraic context in
Question~\ref{question:main-algebraic} and in a geometric context in Question~\ref{question:main}, is that a poset-upper-triangular subset of
$\prod_{i \in \mathcal{I}} M_i$ may not be poset-upper-triangular when
restricted to the components indexed by $\mathcal{J}$.  This is
precisely the issue which our \textbf{poset
  pinball} and its variations are designed to address.
While geometric in inspiration, we emphasize that poset pinball only 
requires the combinatorial data of a poset $(\mathcal{I}, <)$ and a choice
of initial subset $\mathcal{J} \subseteq \mathcal{I}$.

As a consequence of the examples of poset pinball games computed in
this manuscript, we also propose a 
perspective on poset-upper-triangularity which somewhat differs from
that which may be most natural from the point of view of
combinatorics (see Remark~\ref{remark:billey}). 
Combinatorists view poset-upper-triangularity as a key computational
property; indeed, Billey suggests that it is one of the essential features of
the Schubert basis and asks for constructions of such poset-upper-triangular bases in
equivariant cohomology rings of other $G$-spaces. 
On the other hand, in our poset pinball examples it can happen that we
obtain subsets of modules that are \emph{not} poset-upper-triangular with
respect to the original partial order $<$ but are nevertheless 
poset-upper-triangular with respect to a total order $\prec$
compatible with the original partial order.
Poset-upper-triangularity with respect to a total order often suffices to guarantee that a subset
 is linearly independent and hence a module basis,
so in some geometric contexts, it may be more natural to
require only
that module bases be upper-triangular with respect to some
choice of total order compatible with the original partial order.

We now present a concrete 
example of poset pinball in order to convey the flavor of the
game. 
Let $\mathcal{I} = S_4$ denote the permutation group $S_4$. Elements
of $\mathcal{I} = S_4$ are the vertices in
Figure~\ref{fig:SpringerN=4,YoungX=2,2} and are labelled by a choice
of reduced-word decomposition.  (We omit some of the elements of $S_4$
in the figure
because they are not relevant in this example.)  The set $\mathcal{I}$
is partially ordered by Bruhat order, so we draw an edge between
vertices \(w, w' \in \mathcal{I}\) if and only if \(w < w'\) and there
is no \(w'' \in \mathcal{I}\) with \(w < w'' < w'.\) The vertices are
drawn so that the poset's minimal element $e$ is at the bottom, and
horizontal levels indicate Bruhat length.  Let $\mathcal{J}$ be the
subset of $\mathcal{I}$ indicated by the circled vertices in
Figure~\ref{fig:SpringerN=4,YoungX=2,2}, so there are $6$ elements in
the subset $\mathcal{J}$.  To play poset pinball, we successively
release a circled vertex, starting from the lowest vertex in
$\mathcal{J}$ and then moving up; we imagine each circled vertex
rolling down along the edges of the poset until it comes to rest in
a lowest-possible unoccupied vertex, at which point the circle turns into a
square.  (See Section~\ref{subsec:pinball} for precise statements.) This results in a
choice of $6$ vertices of $\mathcal{I}$ corresponding to the original
elements of $\mathcal{J}$ and indicated by the squared vertices in the
figure below. Table~\eqref{eq:SpringerN=4,YoungX=2,2-pinball}  records the exact correspondence
between the initial vertices \(w \in \mathcal{J}\) and the squared
vertices \(v \in \mathcal{I}\).

\begin{figure}[h]
\begin{picture}(150,122)(0,0)
\put(75,0){\circle*{5}}
\multiput(45,30)(30,0){3}{\circle*{5}}
\multiput(15,60)(30,0){5}{\circle*{5}}
\multiput(45,90)(60,0){2}{\circle*{5}}
\put(75,120){\circle*{5}}

\put(75,0){\line(-1,1){30}}
\put(75,0){\line(0,1){30}}
\put(75,0){\line(1,1){30}}

\put(45,30){\line(-1,1){30}}
\put(45,30){\line(0,1){30}}
\put(45,30){\line(1,1){30}}
\put(75,30){\line(-1,1){30}}
\put(75,30){\line(-2,1){60}}
\put(75,30){\line(1,1){30}}
\put(75,30){\line(2,1){60}}
\put(105,30){\line(-1,1){30}}
\put(105,30){\line(0,1){30}}
\put(105,30){\line(1,1){30}}

\put(45,90){\line(0,-1){30}}
\put(45,90){\line(1,-1){30}}
\put(45,90){\line(2,-1){60}}
\put(105,90){\line(1,-1){30}}
\put(105,90){\line(-1,-1){30}}
\put(105,90){\line(-3,-1){90}}
\put(75,120){\line(-1,-1){30}}
\put(75,120){\line(1,-1){30}}

\put(75,0){\circle{10}}
\put(75,30){\circle{10}}
\put(45,60){\circle{10}}
\put(105,60){\circle{10}}
\put(45,90){\circle{10}}
\put(75,120){\circle{10}}

\put(69,-6){\framebox(12,12)}
\multiput(39,24)(30,0){3}{\framebox(12,12)}
\multiput(9,54)(60,0){2}{\framebox(12,12)}

\put(84,-3){$e$}
\put(113,26){$s_3$}
\put(60,20){$s_2$}
\put(28,26){$s_1$}
\put(-11,56){$s_1s_2$}
\put(144,56){$s_3s_2$}
\put(26,72){\small $s_2s_1$}
\put(102,67){$s_2s_3$}
\put(68,48){\small $s_1s_3$}
\put(112,89){$s_1s_3s_2$}
\put(10,89){$s_2s_1s_3$}
\put(30,116){$s_2s_1s_3s_2$}

\end{picture}
\caption{An instance of poset pinball, for the Springer variety in
  $\Flags(\C^4)$ specified by a nilpotent operator $N$ corresponding
  to the partition $(2,2)$.}\label{fig:SpringerN=4,YoungX=2,2}
\end{figure}
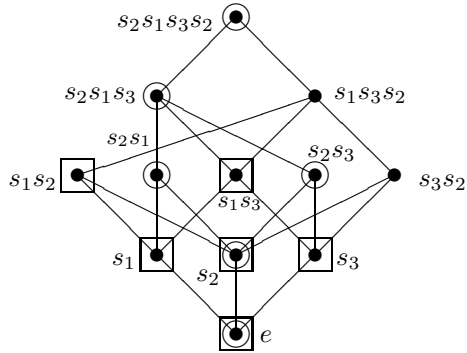

\renewcommand{\arraystretch}{1.3}
\begin{equation}\label{eq:SpringerN=4,YoungX=2,2-pinball}
\begin{array}{c||c|c|}
\mbox{pinball step} & w_k & v_k  \\ \hline \hline
1 & w_1 = e  & v_1 = e \\ \hline
2 & w_2 = s_2 & v_2 = s_2  \\ \hline
3 & w_3 = s_2 s_3 & v_3 = s_3  \\ \hline
4 & w_4 = s_2 s_1 & v_4 = s_1  \\ \hline
5 & w_5 = s_2 s_1 s_3 & v_5 = s_1 s_3 \\ \hline
6 & w_6 = s_2 s_1 s_3 s_2 & v_6 = s_1 s_2 \\ \hline 
\end{array}
\end{equation}

In fact, let $\mathcal{S}_N \subseteq \Flags(\C^4) \cong GL(4,\C)/B$
denote the Springer variety of type $A$ corresponding to a nilpotent
endomorphism $N$ with two Jordan blocks of size $2$. Let 
the maximal torus $T$ act in the standard way on $\Flags(\C^4)$. An $S^1$
subtorus of $T$ preserves $\mathcal{S}_N$ and it turns out that the
$S^1$-fixed points $\mathcal{S}_N^{S^1} = \mathcal{J}$ are exactly the
subset of $\mathcal{I} = S_4 \cong \Flags(\C^4)^T$ indicated in Figure
\ref{fig:SpringerN=4,YoungX=2,2}.  (See Section~\ref{sec:springer}.) 
Moreover in this case the choices
$\{v_i\}_{i=1}^6$ obtained via poset pinball give rise to an
$H^*_{S^1}(\pt;\F)$-module basis for
$H^*_{S^1}(\mathcal{S}_N;\F)$. 
Specifically, consider the ring homomorphism
\begin{equation}\label{eq:intro}
H^*_T(\Flags(\C^4);\F) \to H^*_{S^1}(\mathcal{S}_N;\F)
\end{equation}
obtained by composing the map
\(H^*_T(\Flags(\C^4);\F) \to H^*_{S^1}(\Flags(\C^4);\F)\) with the map
\(H^*_{S^1}(\Flags(\C^4);\F) \to H^*_{S^1}(\mathcal{S}_N;\F)\)
induced by inclusion of groups \(S^1 \into T\) and spaces
\(\mathcal{S}_N \into \Flags(\C^4)\) respectively. Denote the image of
an equivariant Schubert class $\sigma_w \in H^*_T(\Flags(\C^4);\F)$
under the map~\eqref{eq:intro} by $p_w$. Then the $6$ classes $p_{v_1},
\ldots, p_{v_6}$ which correspond to the outcome of the previous poset pinball
game form an
$H^*_{S^1}(\pt;\F)$-module basis for
$H^*_{S^1}(\mathcal{S}_N;\F)$. Moreover, the set of images of the $p_{v_i}$
under the natural restriction map 
\begin{equation}\label{eq:intro-local}
\iota^*: H^*_{S^1}(\mathcal{S}_N;\F) \into H^*_{S^1}(\mathcal{S}_N^{S^1};\F) \cong
\bigoplus_{i=1}^6 H^*_{S^1}(\pt;\F) \cong \bigoplus_{i=1}^6 \F[t]
\end{equation}
is poset-upper-triangular with respect to the partial
order on the fixed points $\mathcal{S}_N^{S^1}$ induced from
Bruhat order. In other words, for each \(p \in
H^*_{S^1}(\mathcal{S}_N;\F)\) let $p(w_i)$ denote the component of
$\iota^*p$ in the $w_i$-th summand of the right side
of Equation~\eqref{eq:intro-local}. Then for each \(i = 1,2,\ldots, 6\) we
have 
\begin{equation}\label{eq:vanishing}
p_{v_i}(w_j) = 0 \mbox{ for } w_j \not \geq w_i 
\end{equation}
where the inequality indicates the partial order on
$\mathcal{S}_N^{S^1} \subseteq S_4$ induced from Bruhat order on
$S_4$. These vanishing properties allow us to do explicit computations
in $H^*_{S^1}(\mathcal{S}_N;\F)$ (see Section~\ref{sec:springer} for
applications).

We now briefly outline the contents of the paper. In
Section~\ref{sec:preliminaries} we present the combinatorial and algebraic
preliminaries for the pinball game. Poset pinball itself
is described in detail in Section~\ref{sec:pinball}. We give two
concrete examples of poset pinball in
Section~\ref{subsec:pinball-examples} and make initial observations
concerning the role played by principal order ideals in pinball theory
in Section~\ref{subsec:ideals}. We then explain the geometric
motivation and context
in Section~\ref{sec:GKM-geometry}.  We begin with a brief review of
relevant GKM theory in Section~\ref{subsec:background-GKM}, and then
in Section~\ref{subsec:subspace-setup} define GKM-compatible subspaces
of GKM spaces. 
With a view toward future
work, we keep the discussion in Sections~\ref{sec:preliminaries}, \ref{subsec:background-GKM},
and~\ref{subsec:subspace-setup} as general as possible. 
Section~\ref{subsec:borel} discusses the case of Borel-equivariant cohomology, which is the main focus of this manuscript. 
The construction of poset
pinball bases for the $S^1$-equivariant cohomology of Peterson
varieties in classical Lie type occupies
Section~\ref{section:petersons}. In Section~\ref{sec:springer}, we
construct a pinball basis for the equivariant cohomology
$H^*_{S^1}(\mathcal{S}_N;\C)$ of the subregular Springer variety of
type $A$, and lift the usual Springer action on
$H^*(\mathcal{S}_N;\C)$ to $H^*_{S^1}(\mathcal{S}_N;\C)$.

Open questions and avenues for future work are mentioned throughout
the manuscript.

\medskip
\noindent \textbf{Acknowledgements.}  We thank 
David Anderson, Darius Bayegan, Barry Dewitt, Rebecca Goldin, and Bridget
Tenner for helpful conversations.  Both authors were supported in part by the
NSF-funded Midwest Topology Network's travel research grant. Moreover,
some of this work was conducted while the first author was a Research
Member at the Mathematical Sciences Research Institute as part of the
Symplectic and Contact Geometry and Topology program in spring
2010. Both authors also benefited from the American Institute of
Mathematics workshop ``Localization techniques in equivariant
cohomology'' held in March 2010. We gratefully acknowledge the support
and generosity of the Midwest Topology Network, MSRI, and AIM.

\section{Combinatorial and algebraic preliminaries}\label{sec:preliminaries}

The main goal of this section is to present, in purely algebraic and
combinatorial terms, the problem that is the primary focus of this
manuscript.  The setup is as follows.  Let $R$ be a commutative ring
and let $(\mathcal{I}, <)$ be a partially ordered set.  Let $M$ be an
$R$-module that can be realized as a submodule of a product $\prod_{i
  \in \mathcal{I}} M_i$ of $R$-modules $M_i$ for each $i \in
\mathcal{I}$.  Our motivating geometric examples arise from
equivariant topology, where the ring \(R = E^*_G(\pt)\) is the
equivariant cohomology ring of a point, the module \(M = E^*_G(X)\) is
that of a $G$-space $X$, and $M$ injects as a submodule into the
product \(\prod_{i \in {\mathcal I}} M_i = \prod_{i \in {\mathcal I}}
E^*_G(\pt)\).  (See Section~\ref{subsec:background-GKM} for geometric
details.) However, in order to highlight the algebraic issues, we have
chosen to keep the geometry out of the discussion until
Section~\ref{sec:GKM-geometry}.

With this in mind, our main problem can be stated as follows: 
given two
modules $M \subseteq \prod_{i \in \mathcal{I}} M_i$ and $M'
\subseteq \prod_{j \in \mathcal{J}} M'_j$ for \(\mathcal{J} \subseteq \mathcal{I}\), together with 
 a homomorphism
$M \longrightarrow M'$ and
a {\em poset-upper-triangular basis} as defined below,
we wish to 
construct computationally convenient
module bases for $M'$. The next two sections provide the necessary background for this
discussion. Section~\ref{subsec:combinatorics} establishes notation and
terminology for posets and product modules indexed by posets.
In particular, we give a precise definition of a
\emph{poset-upper-triangular} subset of such a module.  We make
general observations related to the construction of poset-upper-triangular
module bases in Section~\ref{subsec:algebra}.  
Section~\ref{subsection:images of modules} describes our main problem precisely.

\subsection{Combinatorial preliminaries}\label{subsec:combinatorics}

Let $({\mathcal I}, <)$ be a partially ordered set. 
Recall that for \(i, j \in \mathcal{I}\), we say that $i$
\textbf{covers} $j$ if $j<i$ and, in addition, there is no $i' \in
\mathcal{I}$ with $j<i'<i$.
A \textbf{rank function} $\rho: {\mathcal I}
\rightarrow \mathbb{N}$ is an $\N$-valued function on the poset such
that 
if $i$ covers $j$ then \(\rho(i) = \rho(j)+1.\)  In the case where $\mathcal{I}$ is infinite, we also 
require $\rho(i)>\rho(j)$.  
For \(i \in
\mathcal{I}\), we call $\rho(i)$ the \textbf{rank} of $i$.   A poset $(\mathcal{I},
<, \rho)$ equipped with a
rank function is called a \textbf{graded poset}.

The partially ordered sets $\mathcal{I}$ in this manuscript will always be assumed
to satisfy the following conditions: 
\begin{itemize} 
\item ${\mathcal I}$ is countable, 
  \item ${\mathcal I}$ is graded, and
   \item for any $d \in {\mathbb N}$ the set $\{ i \in {\mathcal I}: \rho(i) \leq d\}$ is finite.
\end{itemize}

We also recall the following \cite[Chapter 3]{Sta97}. 

\begin{definition}
  Given a poset $(\mathcal{I}, <)$ and an element \(i \in \mathcal{I}\), the \textbf{principal order
    ideal $\mathcal{L}_{\mathcal{I}}(i)$ of $i$} is the subset of elements $i' \in \mathcal{I}$ 
 less than or equal to $i$ with respect to $<$.  In other words 
\[
\mathcal{L}_{\mathcal{I}}(i) := \{ i' \in \mathcal{I} \hsm \vert \hsm i' \leq i \}.
\]
Similarly, the \textbf{principal order filter $\mathcal{U}_{\mathcal{I}}(i)$ of $i$}
is the subset of elements $i' \in \mathcal{I}$ greater than or equal
to $i$ with respect to $<$. In other words  
\[
\mathcal{U}_{\mathcal{I}}(i) := \{i' \in \mathcal{I} \hsm \vert \hsm i \leq i' \}.
\]
\end{definition}

The posets appearing in this manuscript arise as indexing sets for
products of modules, so we introduce some terminology for this
situation. 
Let $R$ be a commutative ring. Suppose $\mathcal{I}$ is a poset as
above and $M_i$ an $R$-module for each \(i \in \mathcal{I}.\) 
Suppose $M$ is a submodule of the product module $\prod_{i \in
  \mathcal{I}} M_i$. 
For \(x \in M \subseteq \prod_{i \in {\mathcal I}} M_i\)
we denote by \(x(i) \in M_i\) the component of $x$ in the $i$-th
factor of the direct product. 
For \(x \in M\), let 
\[   \supp(x) := \{i \in {\mathcal  I}: x(i) \neq 0 \in M_i\}   \]
denote the \textbf{support of $x$}, i.e. the
components $i \in \mathcal{I}$ on which $x$ does not vanish.

\begin{definition}\label{def:poset-flowup}
Suppose $M \subseteq \prod_{i \in \mathcal{I}} M_i$ is a module as above.
An element \(x \in M\) is a \textbf{poset-flow-up (with respect to $<$)} if the support of
  $x$ contains a minimal element $i$, the principal order filter of
  which 
  contains $\supp(x)$; in other words $i \in \supp(x) \subseteq
  \mathcal{U}_{\mathcal{I}}(i)$. We denote the (unique) element 
$i$ by $min(x)$ and call it the \textbf{minimum nonzero coordinate of          
(the poset-flow-up) $x$}. 
 \end{definition}

Note that if $x   \in M$ is a poset-flow-up, then
\begin{equation}\label{eq:poset-flowup-condition}
x(j) = 0 \textup{ for all } j \not \geq min(x). 
\end{equation}
Throughout the manuscript, we consider collections of elements in a
module \(M \subseteq \prod_{i \in \mathcal{I}} M_i\) with vanishing
properties similar to those in~\eqref{eq:poset-flowup-condition}. We have the
following.

\begin{definition}\label{def:poset-upper-triangular}
Suppose $M \subseteq \prod_{i \in \mathcal{I}} M_i$ is a module as above. Suppose \({\mathcal B} = \{x_\alpha\} \subseteq M.\) 
The set ${\mathcal B}$ is
  \textbf{poset-upper-triangular} if 
\begin{itemize} 
\item each
  $x_\alpha$ is a poset-flow-up, and 
\item for \(\alpha \neq \beta\), we have \(min(x_\alpha) \neq
  min(x_\beta).\) 
\end{itemize} 
\end{definition}

\subsection{Upper-triangularity for module generators and
  bases}\label{subsec:algebra}

In this section, we construct 
poset-upper-triangular module generators and bases of $R$-modules \(M
\subseteq \prod_{i \in \mathcal{I}} M_i\) from a purely algebraic
viewpoint. An $R$-module basis must
both generate the module and be $R$-linearly independent; we address
the two conditions separately.  We keep the assumptions of
Section~\ref{subsec:combinatorics}.  Throughout, we think of $M$ as a
topological $R$-module, considered as the (inverse) limit of the
submodules \(M \cap \prod_{j\leq i} M_j\).  In particular, the terms
`generator' and `basis' are understood in the topological sense. (If
the poset $\mathcal{I}$ is finite, then this agrees with the usual
notions.)

Suppose $\prec$ is a total ordering on $\mathcal{I}$ 
compatible with the given partial order $<$ on $\mathcal{I}$.
We begin by inductively constructing a set of
generators of \(M \subseteq \prod_{i \in \mathcal{I}} M_i\) that consists
of poset-flow-ups 
with respect to $\prec$. Poset-upper-triangularity with
respect to the total order $\prec$ is a weaker condition than that
with respect to the original partial order $<$ but we will see later
in the manuscript that
upper-triangularity with respect to $\prec$ suffices for many
computational purposes.

\begin{proposition}\label{proposition:generate}
  Let $({\mathcal I},<)$ be a countable graded partially ordered set
  with a finite number of elements of each rank.  Let $R$ be a
  commutative ring, $M_i$ an $R$-module for each \(i \in
  \mathcal{I}\), and $M$ an $R$-submodule \(M \subseteq \prod_{i \in
    {\mathcal I}} M_i.\) Suppose $\prec$ is a total ordering compatible with
   the partial order $<$ on ${\mathcal I}$.
For each \(i \in {\mathcal I}\), define 
\begin{equation}\label{eq:ideal}
{\mathcal V}^i := \{ x \in M \hsm \vert \hsm x(j) = 0 \textup{ for all }j \in {\mathcal I} \textup{ with } j
\prec i  \}
\end{equation}
and denote by $\mathcal{V}^i(i)$ the image of
    ${\mathcal V}^i$ in $M_i$ under the natural projection $M
    \to M_i$. 
Then for each \(i \in {\mathcal I}\) there exist sets ${\mathcal K}_i$
and nonzero elements \(\{x_{i,
    k}\}_{k \in {\mathcal K}_i} \subseteq M\) such that 
  \begin{enumerate}
  \item[(i)] \(x_{i,k}(j) = 0\) for all \( k \in {\mathcal K}_i\)  and $j \in \mathcal{I}$ with \(j \prec i\),
  \item[(ii)] the set \(\{x_{i,k}(i) \}_{k
      \in {\mathcal K}_i} \subseteq M_i\) generates $\mathcal{V}^i(i)$ as an $R$-module.
\end{enumerate} 
Moreover, the union \({\mathcal B} := \bigcup_{i \in {\mathcal I}}
    \{x_{i,k}\}_{k \in {\mathcal K}_i}\) is a set of 
    $R$-module generators of $M$. 
\end{proposition}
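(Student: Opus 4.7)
The plan is to build the $x_{i,k}$ directly as lifts of $R$-module generators of the submodules $\mathcal{V}^i(i) \subseteq M_i$, and then to exhibit any $x \in M$ as an $R$-linear combination of these by iteratively zeroing out the coordinates of $x$ one at a time in $\prec$-order.

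For the construction, I would first check that $\mathcal{V}^i$ is an $R$-submodule of $M$ (immediate, since the vanishing conditions $x(j) = 0$ for $j \prec i$ are preserved under addition and $R$-scaling), and hence that its projection $\mathcal{V}^i(i)$ is an $R$-submodule of $M_i$. For each $i \in \mathcal{I}$, I would then choose any $R$-module generating set $\{y_{i,k}\}_{k \in \mathcal{K}_i}$ of $\mathcal{V}^i(i)$ (discarding any $y_{i,k} = 0$), and for each $k$ select a preimage $x_{i,k} \in \mathcal{V}^i$ with $x_{i,k}(i) = y_{i,k}$. Condition (i) then holds tautologically from $x_{i,k} \in \mathcal{V}^i$, and condition (ii) holds by construction.

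For the claim that $\mathcal{B}$ generates $M$, given $x \in M$ I proceed by iterated subtraction. Set $y_0 := x$; at step $n \geq 1$, let $i_n$ be the $\prec$-smallest element of $\supp(y_{n-1})$. The key observation is that by induction $y_{n-1}$ vanishes on every $j \prec i_n$, so $y_{n-1} \in \mathcal{V}^{i_n}$ and hence $y_{n-1}(i_n) \in \mathcal{V}^{i_n}(i_n)$. Condition (ii) then allows me to write $y_{n-1}(i_n) = \sum_k r_{n,k}\, x_{i_n,k}(i_n)$ as a finite $R$-linear combination, and I define $y_n := y_{n-1} - \sum_k r_{n,k}\, x_{i_n,k}$, which still lies in $\mathcal{V}^{i_n}$ and additionally satisfies $y_n(i_n) = 0$. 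Thus $x = \sum_{n,k} r_{n,k}\, x_{i_n,k}$; when $\mathcal{I}$ is finite the iteration terminates, giving $x$ as an honest finite $R$-linear combination of elements of $\mathcal{B}$.

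The main obstacle I expect is the infinite case. The iterative procedure then produces a sum that need not terminate, and the statement of the proposition must be read in the paper's stipulated topological sense, i.e.\ as convergence in the inverse-limit topology coming from the rank-truncations $M \cap \prod_{j \leq i} M_j$. Here I would use that $\mathcal{I}$ is graded with finitely many elements of each rank and that $\prec$ extends $<$ to argue that the partial sums eventually stabilize on each rank-truncation $\mathcal{I}_{\leq d}$, so the sum converges to $x$. A subsidiary delicacy is that $\prec$-minima of infinite subsets of $\mathcal{I}$ need not exist in general; this is handled by organizing the iteration rank-by-rank, using the finite rank-filtration to reduce at each stage to a finite poset-segment on which the $\prec$-minimum of $\supp(y_{n-1})$ does exist and only finitely many of the $x_{i_n,k}$ can affect the next truncation.
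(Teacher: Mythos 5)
Your proposal is correct and follows essentially the same route as the paper's own proof: the same construction of the $x_{i,k}$ as lifts of chosen generators of $\mathcal{V}^i(i)$, the same inductive subtraction through $\mathcal{I}$ in $\prec$-order using the observation that the remainder at each stage lies in $\mathcal{V}^{i}$, and the same appeal to $M = \underleftarrow{\lim}\, M \cap \prod_{j \leq i} M_j$ to make sense of generation in the infinite case. Your closing remark about $\prec$-minima of infinite subsets is a reasonable precaution, and is resolved exactly as you suggest by the finiteness of each rank, which the paper invokes to guarantee that $\{j : j \prec i\}$ is finite at each inductive step.
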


\begin{proof}
  Note first that ${\mathcal V}^i$ and $\mathcal{V}^i(i)$ are
  $R$-submodules of $M$ and $M_i$ respectively.  For each \(i \in
  {\mathcal I}\) choose a set of nonzero generators
  $\{y_{i,k}\}_{k \in \mathcal{K}_i}$ of ${\mathcal
    V}^i(i)$ indexed by a set $\mathcal{K}_i$.  For each \(i \in
  {\mathcal I}\) and \(k \in {\mathcal K}_i\) choose an element
  \(x_{i,k} \in {\mathcal V}^i\) that projects to
  \(y_{i,k}\) under the natural map, namely  \(x_{i,k}(i) = y_{i,k}.\)
  By construction, the sets $\mathcal{K}_i$ and elements $x_{i,k}$ for
  \(i \in \mathcal{I}, k \in \mathcal{K}_i\) satisfy conditions (i)
  and (ii) of the proposition.

  It remains to show that the union $\mathcal{B} := \bigcup_{i \in
    \mathcal{I}} \{x_{i,k}\}_{k \in \mathcal{K}_i}$ forms a set of
  $R$-module generators for $M$.  We first claim that for
  any \(i \in {\mathcal I}\), the module $M$ is generated as an
  $R$-module by the elements \({\mathcal B}_i := \displaystyle \bigcup_{j \prec i} \bigcup_{k \in {\mathcal K}_j} \{x_{j,k}\}\),
together with the submodule ${\mathcal V}^i$. 
We proceed by induction.  For the base case, let \(i_0 \in {\mathcal I}\) be the (unique)
minimal element in ${\mathcal I}$ with respect to the total order 
$\prec$.  In this case $\{j \in {\mathcal I}: j \prec i\}$ is empty,
so ${\mathcal V}^{i_0} = M$ and the claim holds.  Now let \(i
\in {\mathcal I}\) and suppose by induction that the claim holds for
all \(j \prec i.\) By our assumptions on $\mathcal{I}$
in Section~\ref{subsec:combinatorics}, the set $\{j \in \mathcal{I}:
j \prec i\}$ is
finite for any total ordering $\prec$ compatible with the partial
order $<$. Hence there exists \(i' \in {\mathcal I}\) which is 
maximal with respect to $\prec$ in \(\{j \in {\mathcal I}: j
\prec i\}\). Now let \(x \in M.\) By the
inductive hypothesis there exist  \(x'' \in {\mathcal V}^{i'}\) and \(a_{j,k} \in R\) for \(j
\prec i', k \in {\mathcal K}_j\)  such that
\begin{equation}\label{eq:x-linearcomb}
x = x'' + \sum_{j \prec i'} \sum_{k \in {\mathcal K}_j} a_{j,k} x_{j,k}.
\end{equation}
Since $\{x_{i',k}(i')\}_{k \in {\mathcal K}_{i'}}$ generate ${\mathcal V}^{i'}(i')$ there exist 
\(a_{i',k} \in R\) for \(k \in {\mathcal K}_{i'}\)  such that 
\begin{equation}\label{eq:def-asubiprimek}
x''(i') = \sum_{k \in {\mathcal K}_{i'}} a_{i',k} y_{i',k}.
\end{equation}
Now define 
\begin{equation}\label{eq:def-xprime}
x' := x'' - \sum_{k \in {\mathcal K}_{i'}} a_{i',k} x_{i', k}.
\end{equation}
Then  by construction
\begin{equation}\label{eq:sol}
x = x' + \sum_{j \preceq i'} \sum_{k \in {\mathcal K}_j} a_{j,k} x_{j,k}
= x'' + \sum_{j \prec i} \sum_{k \in {\mathcal K}_j} a_{j,k} x_{j,k}.
\end{equation}
To prove the claim, we show that $x' \in \mathcal{V}^i$.  Suppose \(j \prec i.\) Then either \(j \prec i'\)
or \(j=i'\).  First suppose \(j \prec i'.\) Since \(x'' \in {\mathcal
  V}^{i'}\) we have \(x''(j) = 0.\) Similarly, each $x_{i', k} \in
{\mathcal V}^{i'}$ so \(x_{i',k}(j) = 0\).  Projection to $M_j$ is
$R$-linear, so 
\[
x'(j) = x''(j) - \sum_{k \in {\mathcal K}_{i'}} a_{i',k} x_{i',k}(j) =
0
\]
as desired.  Now suppose \(j = i'.\) Then by definition of $x_{i',
  k}$ and by Equation~\eqref{eq:def-asubiprimek} we see
\begin{equation}
x'(i')  = x''(i') - \sum_{k \in {\mathcal K}_{i'}} a_{i', k} x_{i',
  k}(i')  = x''(i') - \sum_{k \in {\mathcal K}_{i'}} a_{i', k}
 y_{i', k} = 0.
\end{equation}
Together these mean $x' \in \mathcal{V}^i$. Hence $M$ is generated by the
elements in $\mathcal{B}_i$ together with $\mathcal{V}^i$. 
The result follows from 
\(M = \underleftarrow{\lim} \hsm M \cap
\left(\prod_{j\leq i} M_j \right).\)
\end{proof}

\begin{remark}\label{remark:triangular}
\begin{enumerate} 
\item It is sometimes impossible to find a set of generators of a
  module $M$ which is poset-upper-triangular with respect to
  the original partial order $<$. For example, suppose $M$ is the
  image of the standard diagonal embedding $R \into \prod_{i \in
    \mathcal{I}} R$ with $1 < |\mathcal{I}| < \infty$, and take the
  trivial partial order on $\mathcal{I}$ in which all pairs of
  elements are incomparable and the trivial rank function $\rho \equiv
  0$. 
  Hence the choice of a total order $\prec$ is a crucial step in Proposition~\ref{proposition:generate}.
\item On the other hand, 
  Proposition~\ref{proposition:generate} extends straightforwardly to partially ordered sets
  {\em without} a choice of total order if we assume that the
  vanishing submodules ${\mathcal V}^i$ together generate all of $M$.
\end{enumerate}
\end{remark}

The previous proposition constructed
module generators. 
Our next task is to deal with $R$-linear independence. For the
remainder of the manuscript, we assume that 
\begin{itemize} 
\item $R$ is a domain, and 
\item for all \(i \in \mathcal{I}\) the module $M_i$ is $R$-torsion-free.
\end{itemize}
These assumptions imply that the submodule $M \subseteq
\prod_{i \in \mathcal{I}} M_i$ is also $R$-torsion-free.  (In fact, in
our geometric applications, it is usually the case that $M_i \cong R$
for all $i$ and that $M$ is a free $R$-module.)  We begin by 
giving conditions under which the
construction in Proposition~\ref{proposition:generate} in fact yields
an $R$-module basis.  

\begin{proposition}\label{proposition:independent} 
  Let \(R, {\mathcal I}, M_i, M\) satisfy the conditions of
  Proposition~\ref{proposition:generate}. Assume that $R$ is a domain
  and that the $R$-module $M_i$ is $R$-torsion-free for all \(i \in
  \mathcal{I}\). 
\begin{enumerate}
\item If $\{x_\alpha\}$
  is poset-upper-triangular, then $\{x_\alpha\}$ is $R$-linearly
  independent.
\item Suppose that for all \(i \in \mathcal{I}\), the sets
  $\{x_{i,k}\}_{k \in \mathcal{K}_i}$ constructed in
  Proposition~\ref{proposition:generate} may be chosen to be
  $R$-linearly independent. Then the union $\mathcal{B} = \bigcup_{i
    \in \mathcal{I}} \{x_{i,k}\}_{k \in \mathcal{K}_i}$ is an
  $R$-module basis of $M$. 
\item Suppose that for all \(i \in \mathcal{I}\), the
  index sets $\mathcal{K}_i$ constructed in
  Proposition~\ref{proposition:generate} may be chosen such that
  either $|\mathcal{K}_i| = 0$ or $|\mathcal{K}_i| = 1$. Then the union $\mathcal{B} = \bigcup_{i
    \in \mathcal{I}} \{x_{i,k}\}_{k \in \mathcal{K}_i}$ is an
  $R$-module basis of $M$. 
\item Suppose that $R$ is a principal ideal domain and each $M_i$ is a
  free $R$-module of rank $1$. Then the
  index sets $\mathcal{K}_i$ constructed in
  Proposition~\ref{proposition:generate} may be chosen such that
  either $|\mathcal{K}_i| = 0$ or $|\mathcal{K}_i| = 1$ and the union $\mathcal{B} = \bigcup_{i
    \in \mathcal{I}} \{x_{i,k}\}_{k \in \mathcal{K}_i}$ is an
  $R$-module basis of $M$. 
\end{enumerate} 
\end{proposition}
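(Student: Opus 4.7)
The plan is to reduce all four statements to a single ``leading term'' argument in the spirit of Gaussian elimination, suitably adapted to the poset-product setting $\prod_{i \in \mathcal{I}} M_i$. Given any finite vanishing $R$-linear combination, the idea is to fix a linear extension $\prec$ of $<$, locate the element whose minimum nonzero coordinate is $\prec$-smallest, and then project to that coordinate to isolate its coefficient, using torsion-freeness of $M_{i_0}$ over the domain $R$ to kill it. This one technique drives each of (1)--(4).

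First, for (1), I would fix a linear extension $\prec$ of $<$ and suppose for contradiction that $\sum_\alpha c_\alpha x_\alpha = 0$ is a nontrivial $R$-linear relation (necessarily finite). Among the $\alpha$ with $c_\alpha \neq 0$, I pick $\alpha_0$ so that $i_0 := min(x_{\alpha_0})$ is $\prec$-minimal. Poset-upper-triangularity guarantees the $min(x_\beta)$ are distinct, so for every other $\beta$ in the sum one has $i_0 \prec min(x_\beta)$; since $\prec$ refines $<$, the inequality $i_0 \geq min(x_\beta)$ is impossible in the partial order, and the poset-flow-up property of $x_\beta$ yields $x_\beta(i_0) = 0$. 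Projecting to $M_{i_0}$ then reduces the relation to $c_{\alpha_0}\, x_{\alpha_0}(i_0) = 0$, and torsion-freeness of $M_{i_0}$ combined with $x_{\alpha_0}(i_0) \neq 0$ forces $c_{\alpha_0} = 0$, a contradiction.

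Next, for (2), Proposition~\ref{proposition:generate} already gives that $\mathcal{B}$ generates $M$, so the remaining task is linear independence. I would read the hypothesis that the $\{x_{i,k}\}_{k \in \mathcal{K}_i}$ may be chosen $R$-linearly independent as saying that the generators $y_{i,k} = x_{i,k}(i)$ of $\mathcal{V}^i(i)$ may be chosen $R$-linearly independent in $M_i$ (in which case any lifts are automatically linearly independent in $M$). The same leading-term argument as in (1), now run directly with respect to $\prec$, applies: from a finite relation $\sum_{i,k} c_{i,k} x_{i,k} = 0$ I extract the $\prec$-smallest $i_0$ with some $c_{i_0,k} \neq 0$, and projecting to $M_{i_0}$, the $i \prec i_0$ terms vanish by minimality while the $i \succ i_0$ terms vanish since $x_{i,k} \in \mathcal{V}^i$ implies $x_{i,k}(i_0) = 0$. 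The surviving relation $\sum_k c_{i_0, k} y_{i_0, k} = 0$ is then killed by the independence of the $y_{i_0, k}$.

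Finally, for (3) the hypothesis $|\mathcal{K}_i| \leq 1$ makes the per-$i$ independence in (2) vacuous, and the one-term version of the projection argument (using torsion-freeness to kill the single surviving coefficient) finishes the proof. For (4) the plan is structural: when $R$ is a PID and each $M_i \cong R$, the submodule $\mathcal{V}^i(i) \subseteq M_i \cong R$ is an ideal of a PID, hence principal, so I may take $\mathcal{K}_i$ of size at most one and reduce to (3). The main obstacle I anticipate is the careful bookkeeping in (2) between the lifts $x_{i,k} \in M$ and their projections $y_{i,k} \in M_i$; once this distinction is sorted out, all four parts come down to the same projection-and-leading-term argument.
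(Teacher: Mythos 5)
Your proof is correct and follows essentially the same strategy as the paper: the paper's terse ``follows by induction'' for Part~(1) is exactly your leading-term argument (project to the $\prec$-smallest minimum nonzero coordinate, use torsion-freeness to kill the coefficient), and Parts~(3)--(4) are handled identically. Your treatment of Part~(2) is actually more careful than the paper's ``by definition'': you correctly observe that what the projection argument needs is independence of the projected generators $y_{i,k}=x_{i,k}(i)$ in $M_i$ (which then forces independence of the lifts $x_{i,k}$), rather than independence of the lifts as literally written; this is the right reading, since independence of the $x_{i,k}$ alone does not guarantee independence of the union $\mathcal{B}$ across different $i$.
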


\begin{proof}
  If $\{x_{\alpha}\}$ is poset-upper-triangular then for each
  $i \in \mathcal{I}$ there is at most one $\alpha$ with
  $min(x_{\alpha})= i$.  Using this, Part (1) follows by 
  induction.  Part (2) is by definition. Part (3) follows
  from Part (2). Part (4) is a special case of Part (3) since, by
  definition, an ideal in 
  a PID is generated by a single element. 
\end{proof}

In our geometric applications, we are frequently in the situation of
Part (4) of Proposition~\ref{proposition:independent}.  More
generally, whenever Part (3) of
Proposition~\ref{proposition:independent} holds, the module basis
elements correspond to elements of the poset, so we may think of the
bases $\mathcal{B}$ as being indexed by the poset ${\mathcal I}$ (or
possibly a subset of $\mathcal{I}$). In this way, the combinatorics of
the poset ${\mathcal I}$ interacts directly with the algebra of the
module $M$ via the basis ${\mathcal B}$. This links the combinatorial
strategies of Section~\ref{sec:pinball} to the algebraic problem of
constructing module bases.  We close the section with a partial
converse to Proposition~\ref{proposition:generate}.

\begin{lemma}\label{lemma: generate}
  Let \(R, {\mathcal I}, M_i, M\) satisfy the conditions of
  Proposition~\ref{proposition:generate}.  Assume that $R$ is a domain
  and $M_i \cong R$ for each $i \in \mathcal{I}$. Suppose that $\{x_i\}_{i
    \in {\mathcal I}}$ is poset-upper-triangular with respect to the partial order on $\mathcal{I}$, that
  \(\min(x_i) = i\) for each $i$, and that $\{x_i\}_{i \in \mathcal{I}}$ generates $M$ as an
  $R$-module.  Then the choice of singleton set $\{x_i\}$ for each
  $i\in \mathcal{I}$ satisfies Conditions (i) and (ii) of Proposition
  \ref{proposition:generate}.
\end{lemma}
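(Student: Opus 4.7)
The plan is to verify directly that for each $i \in \mathcal{I}$, the singleton set $\{x_i\}$ (with $\mathcal{K}_i = \{*\}$ and $x_{i,*} = x_i$) satisfies conditions (i) and (ii) of Proposition~\ref{proposition:generate} for some total order $\prec$ compatible with $<$.

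Condition (i) is almost immediate. Poset-upper-triangularity with respect to $<$ together with $\min(x_i) = i$ gives $x_i(j) = 0$ for every $j \not\geq i$ in the partial order. If $j \prec i$, then $j \geq i$ in $<$ would force $j \succeq i$ in $\prec$ by compatibility (contradicting $j \prec i$), so $j \not\geq i$ in $<$ and hence $x_i(j) = 0$.

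Condition (ii) is the substantive part: I need to show that $x_i(i)$ generates $\mathcal{V}^i(i)$ as an $R$-module. Given $y \in \mathcal{V}^i$, I use the hypothesis that $\{x_j\}_{j \in \mathcal{I}}$ generates $M$ to write $y = \sum_j a_j x_j$ (topologically, noting that $x_k$ contributes to any fixed coordinate $j$ only when $k \leq j$ in $<$, which is a finite set). The strategy is to show $a_j = 0$ for every $j \prec i$; once this holds, evaluating the $i$-th coordinate collapses the sum to the single term $a_i x_i(i)$, because any $k < i$ in $<$ has $k \prec i$ in $\prec$ (hence $a_k = 0$) while any $k$ with $x_k(i) \neq 0$ forces $k \leq i$ in $<$ by the flow-up property. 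This yields $y(i) = a_i x_i(i) \in R \cdot x_i(i)$.

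The main obstacle is eliminating the nonzero $a_j$'s with $j \prec i$. I would argue by contradiction: suppose $S := \{j \prec i : a_j \neq 0\}$ is nonempty. Because $\{j : j \prec i\}$ is finite under the standing hypotheses on $\mathcal{I}$, the set $S$ is finite and has a $\prec$-minimum $j^\star$. Evaluating $y(j^\star)$: for any $k$ with $x_k(j^\star) \neq 0$, the flow-up property of $x_k$ forces $j^\star \geq k$ in $<$, hence $j^\star \succeq k$ in $\prec$. For such a $k$ with $a_k \neq 0$, either $k \prec i$, in which case $k \in S$ so $k \succeq j^\star$ by minimality and therefore $k = j^\star$; or $k \succeq i$, which gives $k \succ j^\star$ and contradicts $j^\star \succeq k$. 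Thus $y(j^\star) = a_{j^\star}\, x_{j^\star}(j^\star)$, which is nonzero because $M_{j^\star} \cong R$ is torsion-free, $a_{j^\star} \neq 0$, and $x_{j^\star}(j^\star) \neq 0$ (the minimum coordinate lies in the support). This contradicts $y \in \mathcal{V}^i$, which requires $y(j^\star) = 0$ since $j^\star \prec i$. Hence $S$ is empty, and condition (ii) follows.
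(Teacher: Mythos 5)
Your proof is correct and rests on the same core mechanism as the paper's: write an element of $\mathcal{V}^i$ as an $R$-linear combination of the $x_j$, use the flow-up property to restrict which $x_k$ contribute at a given coordinate, and then evaluate to kill the offending coefficients. The paper organizes this as a forward induction over the (finite) initial segment $\{j : j \preceq i\}$, showing at each step that $\mathcal{V}^j$ is generated by $\{x_k : k \succeq j\}$ by peeling off one coefficient $c_{i'}$ at a time; you instead run a minimal-counterexample argument on $S = \{j \prec i : a_j \neq 0\}$, which is the same induction in contrapositive form. Both evaluate at the critical coordinate (there $i'$, here $j^\star$) and use that $M_{j^\star}\cong R$ is a domain together with $x_{j^\star}(j^\star)\neq 0$ to force the coefficient to vanish. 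One small phrasing slip: you say the conditions hold ``for some total order $\prec$ compatible with $<$,'' but Proposition~\ref{proposition:generate} fixes $\prec$ in advance and the lemma's claim is for that given $\prec$; your argument in fact works for any compatible $\prec$, so nothing is lost, but the quantifier should read ``for any'' rather than ``for some.''
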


\begin{proof}
  The set $\{x_i\}_{i \in \mathcal{I}}$ is poset-upper-triangular with respect to $<$ so
  it is also poset-upper-triangular with respect to any total ordering
  $\prec$ compatible with $<$.  This is Condition (i) of Proposition
  \ref{proposition:generate}.  

  To prove Condition (ii), we show that $x_j(j)$ generates
  $\mathcal{V}^j(j)$ as an $R$-module for each $j$.  We first claim
  that each $\mathcal{V}^j$ is generated by $\{x_i\}_{j \prec i
    \textup{ or } j=i}$. We proceed by induction. For the base case,
  let $j_0$ be the minimal element of $\mathcal{I}$ with respect to
  $\prec$. Then the set $\{i: j_0 \prec i \textup{ or } j_0=i\}$ is
  all of $\mathcal{I}$ and the claim trivially holds. Now suppose that
  the claim holds for all $i$ with $i \prec j$ and that $i'$ is
  maximal in $\{i \prec j\}$. (As in the proof of
  Proposition~\ref{proposition:generate}, a maximal $i'$ exists
  because $\{i: i \prec j\}$ is finite by the hypotheses on
  $\mathcal{I}$.) Let \(x \in \mathcal{V}^j \subseteq
  \mathcal{V}^{i'}.\) By the inductive hypothesis,
\[
x = c_{i'} x_{i'} + \sum_{i: i' \prec i} c_i x_i =
c_{i'} x_{i'} + \sum_{i: i=j \textup{ or } j \prec i} c_i x_i.
\]
The set is poset-upper-triangular so  $x_i(i')=0$ for all $i$ with $i' \prec i$.  Evaluating $x$ at $i'$ yields \(x(i') = c_{i'}
x_{i'}(i')\) which must equal $0$ since $x \in \mathcal{V}^j$. By assumption \(x_{i'}(i') \neq 0\) so $c_{i'}=0$.  This means $\mathcal{V}^j$ is generated by
$\{x_i\}_{j \prec i \textup{ or } j=i}$ as desired.

Finally, evaluation at $j$ yields
\(x(j)=c_j x_j(j).\) The element $x\in \mathcal{V}^j$ was chosen arbitrarily, so we conclude
$\mathcal{V}^j(j)$ is generated by the single element
$x_j(j)$ as claimed.  
\end{proof}

\subsection{Bases for submodules of products}\label{subsection:images of modules}

In this section we present the central problem of the manuscript,
stated in purely algebraic and combinatorial language. Its geometric
manifestation is reserved until
Section~\ref{sec:GKM-geometry}. We also explain the core difficulty
in addressing the problem, which motivates the poset pinball
game introduced in Section~\ref{sec:pinball}.

Let $\mathcal{I}$ be a countable
graded poset with each rank finite.  Let $\mathcal{J}$ be a subset
of $\mathcal{I}$ with the partial order induced from
$\mathcal{I}$. Let $R$ and $R'$ be integral domains, $M_i$ a
torsion-free $R$-module for each \(i \in \mathcal{I}\), and $M'_j$ a
torsion-free $R'$-module for each \(j \in \mathcal{J}.\) 
Let 
\(M \subseteq \prod_{i \in \mathcal{I}}M_i\) and \(M' \subseteq
\prod_{i \in \mathcal{J}} M'_i\) be $R$- and $R'$-submodules,
respectively, of the given products. Suppose \(\gamma: R \to R'\) is a
ring homomorphism and that \(\phi_i: M_i \to M'_i\) are surjective additive homomorphisms  for each \(i \in
\mathcal{J}\)
satisfying
  \begin{equation}
    \label{eq:module-twistedring}
    \phi(rm) = \gamma(r) \phi(m) \quad \textup{ for all } r \in R, i \in \mathcal{I}, \textup{ and } m \in M. 
  \end{equation}
We also assume the homomorphism $\prod_{i \in \mathcal{I}} \phi_i:
\prod_{i \in \mathcal{I}} M_i \rightarrow \prod_{i \in
  \mathcal{J}}M_i'$ restricts to a surjection $\phi: M \to M'$ so 
that the diagram 
\begin{equation}\label{eq:M-Mprime-diagram}
\xymatrix{
M \ar[d]_{\phi} \ar[r] & \prod_{i \in \mathcal{I}} M_i
\ar[d]^{\prod_{i \in \mathcal{J}} \phi_j} \\
M' \ar[r]& \prod_{i \in \mathcal{J}} M'_i
}
\end{equation}
commutes, where the right vertical map is understood to be $0$ on the components $M_i$ for \(i \not \in \mathcal{J}.\) 

Now suppose $\mathcal{B} = \{x_\alpha\}$ is a
poset-upper-triangular basis of $M$.  (In many examples,
such bases exist because of extra geometric structure; see
Section~\ref{sec:GKM-geometry}.) Poset-upper-triangularity
implies that the elements $x_\alpha$ have convenient vanishing
properties when viewed in the product $\prod_{i \in \mathcal{I}}
M_i$. The homomorphism $\phi$ is surjective, so there exists a subset of the image $\phi(\mathcal{B})
\subseteq M'$ which generates $M'$. 
We may then ask the following question (see
Question~\ref{question:main} for the geometric version). 

\begin{question}\label{question:main-algebraic}
Is it 
possible to obtain 
a poset-upper-triangular basis for $M' \subseteq \prod_{i \in
  \mathcal{J}} M'_i$ with respect to the
induced partial order on $\mathcal{J} \subseteq \mathcal{I}$ from a
subset of 
$\phi(\mathcal{B})$?
\end{question}

This turns out to be a difficult problem. The fundamental
obstacle is that the maps in~\eqref{eq:M-Mprime-diagram} do not necessarily
behave well with respect to poset-flow-ups. More precisely, the
intersection $\mathcal{L}_{\mathcal{I}}(i) \cap \mathcal{J}$ of a
principal order filter $\mathcal{L}_{\mathcal{I}}(i)$ with the subset
$\mathcal{J}$ is \emph{not} necessarily a
principal order filter of $\mathcal{J}$ when $i \not \in \mathcal{J}$. As a consequence, the images
$\phi(\mathcal{B}) = \{ \phi(x_\alpha)\}$ need not even be poset-flow-up elements in $M'$ in the sense of
Definition~\ref{def:poset-flowup}.
This means that poset-upper-triangularity with respect to
$(\mathcal{I}, <)$ does not immediately translate via $\phi$ to
poset-upper-triangularity with respect to $(\mathcal{J},
<)$.  In the next section, we introduce the combinatorial game of 
\textbf{poset pinball}, which was 
created to address these difficulties.

\section{Poset pinball: a combinatorial game on directed
  graphs}\label{sec:pinball}

\subsection{The poset pinball game}\label{subsec:pinball}

We now introduce a non-deterministic game which we call \textbf{poset
  pinball}, by analogy with pinball arcade games, which involve
dropping balls on a tilted board.  The game can be understood
and played independently of the considerations in the previous
section; however, the game was designed to address the algebraic
difficulties outlined in Section~\ref{subsection:images of modules}
(and discussed from a geometric perspective in 
Section~\ref{sec:GKM-geometry}).  We present several variants of poset
pinball; which flavor one plays depends on the geometric, algebraic,
or combinatorial context. Examples are in
Section~\ref{subsec:pinball-examples}.  In Section~\ref{subsec:ideals}, we discuss principal order ideals and the role they play in poset pinball.

We begin with the basic structure of the game, common to all
variants. Henceforth we assume that the poset $\mathcal{I}$ is
finite. 

\medskip
\noindent \textbf{Poset pinball rules and terminology:}

\begin{enumerate}
\item Let $(\mathcal{I}, <)$ be a finite partially ordered set.  We
  identify $\mathcal{I}$ with its Hasse diagram, so we think of
  $\mathcal{I}$ as a directed acyclic graph with vertices the elements
  of $\mathcal{I}$ and with a directed edge from $i$ to $i'$ precisely
  when $i$ covers $i'$ with respect to the partial order.  We denote
  an edge from $i$ to $i'$ by $i \mapsto i'$.
\item The graph $\mathcal{I}$ is the 
  \textbf{pinball board}, or simply the \textbf{board}. The vertices 
  are called \textbf{pinball slots}, or simply \textbf{slots}. At most one pinball
  can occupy a slot at any time. 
\item Let $\mathcal{J}$ be a fixed subset of $\mathcal{I}$. We call
  $\mathcal{J}$ an \textbf{initial
    subset}. Note that $\mathcal{J}$ inherits a partial order from
  $\mathcal{I}$. 
\item We place \textbf{pinballs} at the initial subset, i.e., for each
  element \(j \in \mathcal{J}\), we place a pinball at the slot
  corresponding to $j$. 
\item The directed edges of the graph $\mathcal{I}$ are called 
  \textbf{pinball slides}, or simply \textbf{slides}. When released, a pinball may
   \textbf{roll down} along a slide, in the direction determined by the
  directed edge. Specifically, if $i \mapsto i'$ is an edge, a pinball at
  slot $i$ may roll down to the slot $i'$. 
\item During the game, we occasionally place 
  \textbf{walls} across some slides. 
  A wall across a slide \textit{prevents} 
  a pinball from rolling down that edge (slide). The initial board has
  no walls. A wall is never
  removed once it has been placed.  
\item Fix a \textit{total order} $\prec$ on the initial subset
  $\mathcal{J}$ subordinate to the induced partial order on
  $\mathcal{J}$. We write \(\mathcal{J} = \{j_1 \prec j_2 \prec
  \cdots \prec j_{|\mathcal{J}|}\}\) with respect to this total
  order. 
\item We now define the procedure for allowing a pinball to \textbf{roll down
  (along slides)}. 
  Suppose a pinball is at slot $i \in
  \mathcal{I}$. Consider the set of downward-pointing edges with $i$
  as top vertex. The pinball at slot $i$ is allowed to roll
  down to $i'$  as long as 
there is no wall across the slide $i \mapsto i'$. 
Hence we consider 
\begin{equation}\label{eq:roll-step}
\left\{i' \in \mathcal{I}: \begin{array}{c}
\textup{ there exists an edge } i \mapsto i', \textup{ and } \\
\textup{there is no wall across } i \mapsto i'
\end{array}
\right\}.
\end{equation}
Choose an arbitrary element $i'$ in the
set in~\eqref{eq:roll-step} and move the pinball to slot $i'$. We
refer to this as \textbf{rolling along the slide $i \mapsto i'$.} Repeat the above process using the new slot $i'$ in the role of $i$
above, and continue in this manner. We say that the pinball can \textbf{roll no further} if at any
stage the set in~\eqref{eq:roll-step} is empty, namely there are no lower
available slots. When a pinball starting at slot $i$ has rolled down successive slides
until it can roll no further, the final
slot at which the pinball rests is called the \textbf{rolldown of $i$} and denoted $roll(i)$. 
We refer to this process of assocating to $i$ its rolldown
$roll(i)$ as \textbf{rolling (or dropping) the pinball}. 
This procedure  is \emph{not} deterministic because of the choices made when rolling along each slide (just like  real-life pinball!).
Note also that the rolldown $roll(j)$ of a pinball which was
originally at a slot $j \in \mathcal{J}$ might
\emph{not} be an element of $\mathcal{J}$. 

\item We drop pinballs successively according to the total order
  $\prec$ on $\mathcal{J}$. 
  Hence we first drop the pinball from slot $j_1$
  as described above. 
 For every 
  \(k = 1, 2, \ldots,  |\mathcal{J}|\), 
  after rolling the $k$-th pinball, we may place more walls along
  the slides of the board.  Each version of pinball has a separate set
  of rules for placing walls; details 
 for each variant are given in the description below. 
Once the first
  $k-1$ pinballs are dropped, we drop the pinball at $j_k$ and continue until all $|\mathcal{J}|$ pinballs are dropped. 

\item Fix the board $\mathcal{I}$, the 
  initial subset $\mathcal{J}$, the choice of total order $\prec$, and a particular outcome of poset pinball, which we write as $\{(j, roll(j)): j \in \mathcal{J}\}$.  Then we denote by $\mathcal{R}(\mathcal{I},
  \mathcal{J}, \prec)$ the set of slots in ${\mathcal{I}}$ occupied by the
  rolldown elements, i.e. 
\[
\mathcal{R}(\mathcal{I}, \mathcal{J}, \prec) := \{ roll(j): j \in
\mathcal{J}\} \subseteq \mathcal{I}.
\]
We call $\mathcal{R}(\mathcal{I},
\mathcal{J}, \prec)$ the \textbf{rolldown set} for the
given outcome of pinball. We also denote by $\mathcal{R}_k(\mathcal{I}, \mathcal{J},
\prec)$ the set of rolldown elements for the first $k$
pinballs, i.e. 
\[
\mathcal{R}_k(\mathcal{I}, \mathcal{J}, \prec) := \{ roll(j_\ell):
j_\ell \in \mathcal{J}, 1 \leq \ell \leq k \} = \{roll(j_1),
roll(j_2), \ldots, roll(j_k) \} \subseteq \mathcal{I}.
\]
We refer to $\mathcal{R}_k(\mathcal{I}, \mathcal{J}, \prec)$ as the
\textbf{rolldown set up to step $k$}. 
We emphasize that since pinball is \emph{not} deterministic, the
sets $\mathcal{R}(\mathcal{I}, \mathcal{J}, \prec)$ and
$\mathcal{R}_k(\mathcal{I}, \mathcal{J}, \prec)$ might \emph{not}
be uniquely determined by $\mathcal{I}, \mathcal{J}$ and
$\prec$.
\end{enumerate}

The different versions of poset
pinball are distinguished by how the walls are placed after each
pinball in the initial subset rolls down. 
We now describe these variants of poset pinball.

\medskip

\noindent \textbf{Basic pinball.} In this version, the walls are placed as
  follows. Let \(k =1, 2, \ldots, |\mathcal{J}|.\) 
  Suppose the $k$-th pinball $j_k$ has been dropped. We then place a wall across every edge of the form $i \mapsto roll(j_k)$. 
Hence the walls in basic pinball simply 
enforce the rule that at most one pinball may occupy a given
slot at any time.

\medskip

\noindent \textbf{Upper-triangular pinball}. 
This version takes into account the partial order structure on 
$\mathcal{I}$. 
Let \(k =1, 2, \ldots, |\mathcal{J}|.\) Suppose the
    $k$-th pinball $j_k$ has been dropped. We then place a wall across: 
\begin{itemize} 
\item every edge of the form $i \mapsto roll(j_k)$ for \(i \in
  \mathcal{I}\), and 
\item every edge of the form $i \mapsto i'$ for \(i \in \mathcal{I}, i' \in
  \mathcal{L}_{j_k}.\) 
\end{itemize} 
The rules of upper-triangular pinball ensure that for each \(j \in
\mathcal{J}\), the element $j$ has a unique maximal rolldown in its
principal order ideal, and that maximal rolldown is $roll(j)$.

\medskip

\noindent \textbf{Betti pinball.}  This version of pinball is
motivated by the geometric applications discussed later, in which we
allow the Betti numbers of an underlying topological space to impose
additional constraints on the pinball game.  Here we assume that
$\mathcal{I}$ is a finite graded poset with rank function \(\rho:
\mathcal{I} \to \N.\) We also assume \(\mathbf{b} = (b_0, b_1, \ldots,
b_n)\) is a sequence of nonnegative integers. (In geometric
applications, these $b_j$ are in fact the Betti numbers of a topological
space, so we refer to $\mathbf{b}$ as the {\em target Betti numbers}.)
Let \(k =1, 2, \ldots, |\mathcal{J}|.\) Suppose the $k$-th pinball
$j_k$ has been dropped. Then the walls are placed as follows:
\begin{itemize} 
\item Place a wall across any edge of the form $i \mapsto roll(j_k)$ for \(i \in
  \mathcal{I}.\)  
\item Let $\mathcal{R}_k(\mathcal{I}, \mathcal{J}, \prec)$ denote the
  rolldown set up to step $k$. Let \(j \in \{0, 1, \ldots, n\}\) and
  suppose that 
\begin{equation}\label{eq:bettifull}
b_j = | \{ i \in \mathcal{R}_k(\mathcal{I}, \mathcal{J}, \prec):
\rho(i) = j \} |,
\end{equation}
i.e. there are exactly $b_j$ rolldown elements of rank
$j$ at step $k$. 
Then place walls across every edge
of the form 
$i \mapsto i'$ where $\deg(i') = j$. 
\end{itemize} 
These rules ensure that the number of
rolldown elements of rank $j$ do not exceed the given
target $b_j$. 

\medskip

\noindent We say that a game of Betti pinball is
\textbf{successful} if, after all the pinballs in the initial subset
$\mathcal{J}$ are dropped, there are precisely $b_j$
rolldowns of rank $j$ for each $j$.  In other words, after a successful game of Betti pinball, the ranks of
the elements of the rolldown set $\mathcal{R}(\mathcal{I},
\mathcal{J}, \prec)$ precisely reflect the target Betti numbers
$\mathbf{b}$.

\medskip

\noindent \textbf{Upper-triangular Betti
  pinball}. This version adds both the walls for
upper-triangular pinball and those for Betti pinball at each pinball step. We leave it to the
reader to write the rules. 
As in Betti pinball, we assume we are given a graded poset $\mathcal{I}$  and 
the data of {\em target Betti numbers} 
$\mathbf{b} = (b_0, b_1, b_2, \ldots, b_n)$. 
Also as in Betti pinball, we say that a game of upper-triangular Betti pinball is \textbf{successful} if the
ranks of the rolldowns precisely reflect the target Betti numbers.

\subsection{Playing pinball: examples}\label{subsec:pinball-examples}

Here we illustrate our poset pinball game with two concrete examples. 
In both cases, the ambient graded poset $\mathcal{I}$ is the
symmetric group $S_4$ equipped with the usual Bruhat order.  We take the rank of a permutation
$w$ to be the standard \textbf{Bruhat length} of $w$.
For simplicity, we do not draw the entire graph
of $\mathcal{I}$ in the figures below, but only those vertices and edges relevant
in the game.  We
label each vertex by its corresponding permutation, factored into simple
transpositions. 
The graph is drawn so that
all directed edges point towards the bottom of the page, so 
the minimal permutation $e$ is at the bottom of the figure.
The initial subset $\mathcal{J}$ is indicated by the circled vertices,
and the final rolldown set is indicated by the vertices with squares
around them.
Each example is accompanied by a table recording each
step of the poset pinball game as it was played.

\begin{example}
  In our first example, the rolldown set is in fact unique for basic,
  upper-triangular, or Betti pinball with $\mathbf{b} = (1,3)$. 
  In particular, the partial order induced on $\mathcal{J}$ by
  $\mathcal{I}$ is already a total order, so there is a unique total
  order with respect to which to play pinball. Notice that the rolldown set
  $\mathcal{R}(\mathcal{J}, \mathcal{I}, \prec)$ in this example is a
  union of principal order ideals; we explore this phenomenon further
  in Section~\ref{subsec:ideals}. 

The initial subset is \(\mathcal{J} = \{e, s_3, s_3s_2,
  s_3s_2s_1\}.\) 
The final drop-down set is
  \(\mathcal{R}(\mathcal{I}, \mathcal{J}) = \{e, s_3, s_2, s_1\}.\)  (The reader may wish to explore how the game changes if the Betti numbers are $\mathbf{b} =(1,2,1)$.)

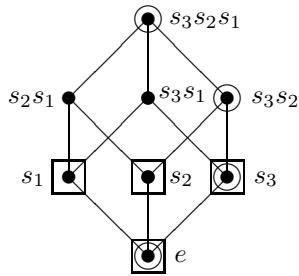
\begin{figure}[h]
\begin{picture}(90,90)(0,0)
\multiput(45,0)(0,30){4}{\circle*{5}}
\multiput(15,30)(30,0){3}{\circle*{5}}
\multiput(15,60)(30,0){3}{\circle*{5}}
\put(45,0){\line(-1,1){30}}
\put(45,0){\line(0,1){30}}
\put(45,0){\line(1,1){30}}

\put(45,90){\line(-1,-1){30}}
\put(45,90){\line(0,-1){30}}
\put(45,90){\line(1,-1){30}}

\put(15,30){\line(0,1){30}}
\put(15,30){\line(1,1){30}}

\put(45,30){\line(1,1){30}}
\put(45,30){\line(-1,1){30}}

\put(75,30){\line(0,1){30}}
\put(75,30){\line(-1,1){30}}

\put(45,90){\circle{10}}
\put(75,60){\circle{10}}
\put(75,30){\circle{10}}
\put(45,0){\circle{10}}

\put(39,-6){\framebox(12,12)}
\put(39,24){\framebox(12,12)}
\put(9,24){\framebox(12,12)}
\put(69,24){\framebox(12,12)}

\put(55,-2){$e$}
\put(85,28){$s_3$}
\put(53,28){$s_2$}
\put(-3,28){$s_1$}
\put(84,58){$s_3s_2$}
\put(49,60){$s_3s_1$}
\put(-8,58){$s_2s_1$}
\put(53,88){$s_3s_2s_1$}

\end{picture}
\caption{Example of basic pinball.}\label{fig:SpringerN=4,YoungX=3,1}
\end{figure}

\renewcommand{\arraystretch}{1.3}
\begin{equation}\label{eq:SpringerN=4,YoungX=3,1-pinball}
\begin{array}{c||c|c|}
\mbox{pinball step} & w_k & v_k  \\ \hline \hline
1 & w_1 = e  & v_1 = e \\ \hline
2 & w_2 = s_3 & v_2 = s_3  \\ \hline
3 & w_3 = s_3 s_2 & v_3 = s_2  \\ \hline
4 & w_4 = s_3 s_2 s_1 & v_4 = s_1  \\ \hline
\end{array}
\end{equation}

\end{example}

\begin{example}\label{example:RegNilpN=4,h=3344}

In this example, we play Betti pinball with target Betti numbers
$\mathbf{b} = (1,3,4,3,1)$.  
 The final drop-down set $\mathcal{R}(\mathcal{I}, \mathcal{J})$
is indicated by the squared vertices. 
Dotted lines indicate a path in the partial order, with some intermediate vertices
  omitted for visual simplicity.  Also for visual simplicity, not all edges in the poset are drawn above rank $2$.
In this example, the drop-down set $\mathcal{R}(\mathcal{J},
\mathcal{I}, \prec)$ is \emph{not} a union of principal order ideals
because of the constraints imposed by the target Betti
numbers. 

The rolldown set in Figure~\ref{fig:RegNilpN=4,h=3344-pinball} could
also be the outcome of a game of upper-triangular Betti pinball.
However, if we instead chose the rolldown $roll(s_1s_2s_3s_1s_2s_1) =
s_1s_2s_3s_2$ then we would obtain a successful outcome of Betti
pinball that is not a successful outcome of upper-triangular Betti
pinball.

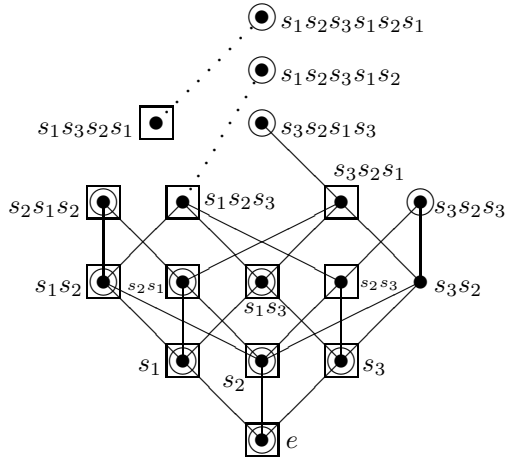
\begin{figure}[h]
\begin{picture}(150,160)(0,0)
\put(75,0){\circle*{5}}
\multiput(45,30)(30,0){3}{\circle*{5}}
\multiput(15,60)(30,0){5}{\circle*{5}}
\multiput(15,90)(30,0){2}{\circle*{5}}
\multiput(105,90)(30,0){2}{\circle*{5}}
\multiput(75,120)(0,20){3}{\circle*{5}}
\put(35,120){\circle*{5}}

\put(75,0){\line(-1,1){30}}
\put(75,0){\line(0,1){30}}
\put(75,0){\line(1,1){30}}

\put(45,30){\line(-1,1){30}}
\put(45,30){\line(0,1){30}}
\put(45,30){\line(1,1){30}}
\put(75,30){\line(-1,1){30}}
\put(75,30){\line(-2,1){60}}
\put(75,30){\line(1,1){30}}
\put(75,30){\line(2,1){60}}
\put(105,30){\line(-1,1){30}}
\put(105,30){\line(0,1){30}}
\put(105,30){\line(1,1){30}}

\put(15,90){\line(0,-1){30}}
\put(15,90){\line(1,-1){30}}
\put(135,90){\line(0,-1){30}}
\put(135,90){\line(-1,-1){30}}
\put(45,90){\line(-1,-1){30}}
\put(45,90){\line(1,-1){30}}
\put(45,90){\line(2,-1){60}}
\put(105,90){\line(1,-1){30}}
\put(105,90){\line(-1,-1){30}}
\put(105,90){\line(-2,-1){60}}

\multiput(75,160)(-4,-4){10}{\circle*{1}}
\multiput(75,140)(-3,-5){10}{\circle*{1}}
\put(75,120){\line(1,-1){30}}

\put(75,0){\circle{10}}
\put(45,30){\circle{10}}
\put(75,30){\circle{10}}
\put(105,30){\circle{10}}
\put(15,60){\circle{10}}
\put(45,60){\circle{10}}
\put(75,60){\circle{10}}
\put(15,90){\circle{10}}
\put(135,90){\circle{10}}
\multiput(75,120)(0,20){3}{\circle{10}}

\put(69,-6){\framebox(12,12)}
\multiput(39,24)(30,0){3}{\framebox(12,12)}
\multiput(9,54)(30,0){4}{\framebox(12,12)}
\put(9,84){\framebox(12,12)}
\put(39,84){\framebox(12,12)}
\put(99,84){\framebox(12,12)}
\put(29,114){\framebox(12,12)}

\put(84,-3){$e$}
\put(113,26){$s_3$}
\put(60,20){$s_2$}
\put(28,26){$s_1$}
\put(-11,56){$s_1s_2$}
\put(140,56){$s_3s_2$}
\put(24,57){\tiny $s_2s_1$}
\put(112,58){\tiny $s_2s_3$}
\put(68,48){\small $s_1s_3$}
\put(-21,86){$s_2s_1s_2$}
\put(140,86){$s_3s_2s_3$}
\put(53,89){$s_1s_2s_3$}
\put(102,100){$s_3s_2s_1$}
\put(-10,116){$s_1s_3s_2s_1$}
\put(82,116){$s_3s_2s_1s_3$}
\put(82,136){$s_1s_2s_3s_1s_2$}
\put(82,156){$s_1s_2s_3s_1s_2s_1$}

\end{picture}
\caption{An example of Betti pinball.}\label{fig:RegNilpN=4,h=3344-pinball}
\end{figure}

\renewcommand{\arraystretch}{1.3}
\begin{equation}\label{eq:RegNilpN=4,h=3344-pinball}
\begin{array}{c||c|c|}
\mbox{pinball step} & w_k & v_k  \\ \hline \hline
1 & w_1 = e = [1,2,3,4] & v_1 = e =[1,2,3,4]\\ \hline
2 & w_2 = s_3 =[1,2,4,3] & v_2 = s_3 = [1,2,4,3] \\ \hline
3 & w_3 = s_2 = [1, 3, 2, 4] &  v_3 = s_2 = [1, 3, 2, 4] \\ \hline
4 & w_4 = s_1 = [2, 1, 3, 4] &  v_4 = s_1 = [2, 1, 3, 4] \\ \hline 
5 & w_5 = s_1 s_3 = s_3 s_1 = [2, 1, 4, 3] &  v_5 = s_1 s_3 = s_3 s_1
= [2, 1, 4, 3] \\ \hline 
6 & w_6 = s_1 s_2 = [2, 3, 1, 4] & v_6 = s_1 s_2 = [2, 3, 1, 4] \\ \hline 
7 & w_7 = s_2 s_1 = [3, 1, 2, 4] &  v_7 = s_2 s_1 = [3, 1, 2, 4] \\ \hline 
8 & w_8 = s_3 s_2 s_3 = [1, 4, 3, 2] & v_8 = s_2 s_3 = [1, 3, 4, 2] \\ \hline
9 & w_9 = s_2 s_1 s_2 = [3, 2, 1, 4] & v_9 = s_2 s_1 s_2 = [3, 2, 1, 4] \\ \hline 
10 & w_{10} = s_3 s_2 s_1 s_3 = [4, 1, 3, 2] & v_{10} = s_3 s_2 s_1 = [4, 1, 2, 3] \\ \hline 
11 & w_{11} = s_1 s_2 s_3 s_1 s_2 = [3, 4, 2, 1] &  v_{11} = s_1 s_2
s_3 = [2, 3, 4, 1] \\ \hline 
12 & w_{12} = s_1 s_2 s_3 s_1 s_2 s_1 = [4, 3, 2, 1] & v_{12} = s_1
s_3 s_2 s_1 = [4, 2, 1, 3] \\ \hline 
\end{array}
\end{equation}
\end{example}

\subsection{Principal order ideals and poset pinball}\label{subsec:ideals}

In this section, we briefly explore the role played by  
principal order ideals in poset pinball. We are motivated
by our geometric applications, in which principal order ideals can correspond
naturally to subvarieties in an ambient variety
(cf. Remark~\ref{remark:schubert}). 

We begin with a simple statement about basic pinball.

\begin{proposition}
 Let $(\mathcal{I}, <)$ be a finite poset and let 
 $\mathcal{J} \subseteq \mathcal{I}$ be a subset. Let $\prec$ be
 a total ordering on $\mathcal{J}$ compatible with the partial order $<$ induced from $\mathcal{I}$. 
  Suppose \(\mathcal{R}(\mathcal{I}, \mathcal{J}, \prec) \subseteq
  \mathcal{I}\) is the rolldown set from a game of basic pinball
  played with board $\mathcal{I}$, initial set $\mathcal{J}$, and total order $\prec$. 
   Then $\mathcal{R}(\mathcal{I},\mathcal{J}, \prec)$ is
  a union of principal order ideals of $\mathcal{I}$. 
\end{proposition}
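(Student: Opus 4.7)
The plan is to observe that a subset $S \subseteq \mathcal{I}$ is a union of principal order ideals if and only if $S$ is downward-closed: indeed, $S = \bigcup_{v \in S} \mathcal{L}_{\mathcal{I}}(v)$ is automatic when $S$ is downward-closed, and conversely any union of principal order ideals is downward-closed. So it suffices to prove that $\mathcal{R}(\mathcal{I}, \mathcal{J}, \prec)$ is downward-closed in $(\mathcal{I}, <)$.

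I would prove by induction on the pinball step $k = 1, 2, \ldots, |\mathcal{J}|$ that the intermediate rolldown set $\mathcal{R}_k(\mathcal{I}, \mathcal{J}, \prec)$ is downward-closed. The base case $\mathcal{R}_0 = \emptyset$ is immediate. For the inductive step, assume $\mathcal{R}_{k-1}$ is downward-closed, and let $v := roll(j_k)$, so that $\mathcal{R}_k = \mathcal{R}_{k-1} \cup \{v\}$. By the inductive hypothesis it is enough to check that every $u < v$ in $\mathcal{I}$ already lies in $\mathcal{R}_{k-1}$; once we know this, $\mathcal{R}_k$ inherits downward-closure from $\mathcal{R}_{k-1}$.

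The heart of the argument is the following observation about basic pinball. When the $k$-th pinball rests at $v$, the rules force the set displayed in~\eqref{eq:roll-step} (with $i = v$) to be empty, so every cover $u'$ of $v$ in $\mathcal{I}$ must have a wall across the edge $v \mapsto u'$. In basic pinball the only walls ever placed are those across edges of the form $i \mapsto roll(j_\ell)$ for some $\ell < k$. Consequently each cover $u'$ of $v$ equals $roll(j_\ell)$ for some $\ell \leq k-1$, so $u' \in \mathcal{R}_{k-1}$. Iterating this along any saturated descending chain $v = v_0 \gtrdot v_1 \gtrdot \cdots \gtrdot v_m = u$, and using that $\mathcal{R}_{k-1}$ is already downward-closed by induction, one deduces $u \in \mathcal{R}_{k-1}$. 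This completes the inductive step and gives the result when $k = |\mathcal{J}|$.

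The proof is essentially bookkeeping once the correct reformulation (downward-closed $\Leftrightarrow$ union of principal ideals) is identified; the only point requiring care is to use the specific wall-placement rule of \emph{basic} pinball (walls only across edges into occupied slots), since this is exactly what fails for the other pinball variants — in upper-triangular or Betti pinball extra walls may appear that block descent along edges whose lower endpoint is not a previous rolldown, and then the conclusion need not hold (as illustrated by Example~\ref{example:RegNilpN=4,h=3344}).
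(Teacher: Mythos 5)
Your proof is correct and follows the same route as the paper's: induction on the pinball step, with the key observation that in basic pinball the walls are precisely the edges into already-occupied slots, so a ball that comes to rest has every cover already in the rolldown set, whence (by the inductive downward-closure of $\mathcal{R}_{k-1}$) everything below the new rolldown is too. The only difference is cosmetic: the paper compresses the crucial step into the single phrase ``if $i < roll(j_k)$ then $i$ must be in $\mathcal{R}_{k-1}$ by the rules of basic pinball,'' while you unpack it via the covers of $v$ and the emptiness of the set in~\eqref{eq:roll-step}; your version makes explicit what the paper leaves implicit.
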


\begin{proof}
  A subset \(\mathcal{K} \subseteq \mathcal{I}\) is a union of
  principal order ideals exactly if for all \(i \in \mathcal{K}\), the
  principal order ideal $\mathcal{L}_i$ is entirely contained in
  $\mathcal{K}$.  Let \(\mathcal{J} = \{j_1, j_2, \ldots,
  j_{|\mathcal{J}|}\}\) be the totally-ordered initial subset.  We
  will induct on $k = 1, 2, \ldots, |\mathcal{J}|$ to show that each
  $\mathcal{R}_k(\mathcal{I}, \mathcal{J}, \prec)$ is a union of
  principal order ideals.  In the base case, the rolldown $roll(j_1)$
  associated to $j_1$ must be minimal in $\mathcal{I}$, by definition
  of basic pinball.  A minimal element $\{roll(j_1)\}$ is a principal
  order ideal, so the claim holds for $k=1$. Now assume that
  $\mathcal{R}_{k-1}(\mathcal{I}, \mathcal{J}, \prec)$ is a union of
  principal order ideals. Let \(roll(j_k) \leq j_k\) be a rolldown of
  $j_k$. If $i < roll(j_k)$ then $i$ must be in the rolldown set
  $\mathcal{R}_{k-1}(\mathcal{I}, \mathcal{J}, \prec)$ by the rules of
  basic pinball.  Hence $\mathcal{L}_i$ is contained in
 \[\mathcal{R}_k(\mathcal{I}, \mathcal{J}, \prec) = \mathcal{R}_{k-1}(\mathcal{I}, \mathcal{J}, \prec) \cup \{roll(j_k)\}.\]
The rolldown set $\mathcal{R}_{k-1}(\mathcal{I}, \mathcal{J}, \prec)$ is itself a union of principal order ideals by the inductive hypothesis.  Hence  $\mathcal{R}_k(\mathcal{I}, \mathcal{J}, \prec)$ is also a union of
 principal order ideals. The case \(k = |\mathcal{J}|\) proves the
 proposition. 
\end{proof}

The previous proposition only applies to basic pinball.  In
upper-triangular or Betti pinball, the additional walls placed during
the game imply that the resulting rolldown set
$\mathcal{R}(\mathcal{I}, \mathcal{J}, \prec)$ may not be a union of
principal order ideals.  Indeed, Example~\ref{example:RegNilpN=4,h=3344} is an
instance of Betti pinball in which the associated rolldown set
$\mathcal{R}(\mathcal{I}, \mathcal{J}, \prec)$ is not a union of
principal order ideals. On the other hand, in many examples (like the
example in the Introduction or Example~\ref{example:SprN=4,X=211}),
Betti pinball does produce rolldown sets which are unions of principal
order ideals. This leads us to ask the following.

\begin{question}\label{question:ideals}
  What are combinatorial conditions on $\mathcal{I}, \mathcal{J},
  \prec$, and (in the case of Betti pinball) the target Betti numbers
  $(b_0, \ldots, b_n)$ which guarantee that outcomes of
  upper-triangular or Betti pinball are unions of principal order
  ideals?
\end{question} 

A concrete answer to this question would yield new perspectives
on the geometric problems, such as computing cohomology rings or
Betti numbers, that motivate our pinball game.

\begin{remark}\label{remark:schubert}
One situation in which an answer to Question \ref{question:ideals} is
straightforward is when the
  initial subset $\mathcal{J}$ is itself the principal order ideal
  $\mathcal{L}_i$ of some element \(i \in \mathcal{I}.\)  In this case,
  a successful outcome of {\em any} version of pinball is 
  the rolldown set $\mathcal{R}(\mathcal{I},
  \mathcal{J}, \prec)$ that equals the original $\mathcal{J} =
  \mathcal{L}_i$. This situation 
  can arise naturally in geometric contexts. We give two examples.
\begin{enumerate}
\item The poset $\mathcal{I}$ is the Weyl group $W$
of a complex reductive algebraic group $G$, identified with the
$T$-fixed points of its flag variety $G/B$.  In this case, a principal
order ideal $\mathcal{L}_w$ of \(w \in W\) corresponds naturally to
the $T$-fixed points in a
Schubert subvariety of $G/B$.  
\item Let $X$ be a complex projective algebraic variety (possibly
  singular) equipped with a $T$-action that has isolated $T$-fixed
  points $X^T$.  Choose a one-parameter subgroup $S: \mathbb{G}_m
  \rightarrow T$ with $X^S = X^T$.  Then $X$ is partitioned into
  locally closed subsets $X_p$ defined by
  \[X_p := \{x \in X: \lim_{z \mapsto 0} S(z) \cdot x = p\}.\] The
  disjoint union $\bigsqcup X_p$ is called a {\em Bialynicki-Birula
    decomposition} of $X$ \cite{BiaBir76}.  Moreover Knutson states that $X^T$ can be
  given a poset structure by taking the transitive closure of the rule that
  $p \leq q$ when $p \in \overline{X^q}$ \cite{Knu08} 
The principal order ideal
  $\mathcal{L}_p$ of $p \in X^T$ then corresponds naturally to the
  $T$-fixed points in 
  $\overline{X_p}$. 
\end{enumerate}
\end{remark}

\section{Poset pinball for GKM-compatible subspaces}\label{sec:GKM-geometry}

\subsection{Background: GKM theory in equivariant cohomology}\label{subsec:background-GKM}

The algebraic questions discussed in the previous sections arise
naturally in equivariant algebraic topology. Suppose $G$ is a
topological group and $X$ is a topological space with a continuous
$G$-action. Let $\pt$ denote the topological space consisting of one
point, equipped with the trivial $G$-action, and let $E^*_G$ denote a
generalized equivariant cohomology theory with a commutative cup
product. (Examples include Borel-equivariant cohomology $H^*_G(-;
\lie{r})$ for various coefficient rings $\lie{r}$, topological
equivariant $K$-theory in the sense of Atiyah and Segal, and
equivariant cobordism; cf. \cite[Chapter XIII]{May96}.)  Then
$E^*_G(\pt)$ is a commutative ring, and $E^*_G(X)$ is naturally an
$E^*_G(\pt)$-module for any $G$-space $X$ via the map induced on
cohomology by the $G$-equivariant map $X \rightarrow \pt$.

We work in a situation in which $E^*_G(X)$ has a well-studied
combinatorial description, often called ``GKM theory'' due to an
influential manuscript of Goresky-Kottwitz-MacPherson \cite{GKM}. We
present one of many variations and generalizations of GKM theory in
the literature (see 
\cite{HHH05} and references therein).

For the purposes of this manuscript only, we say that \textbf{the
  GKM package} holds for a $G$-space $X$ when the 
following statements hold. 
Let $G$ and $X$ be as above. 

\begin{itemize}
\item We assume that the $G$-fixed
set $X^G$ consists of countably many isolated points, i.e. \(X^G \cong \bigcup_{i \in
  \mathcal{I}} F_i\) with $F_i \cong \pt$ for all \(i \in
\mathcal{I}.\) 
\item We assume that the indexing set $\mathcal{I}$ for the fixed
  points $\bigcup_{i \in \mathcal{I}} F_i$ may
be equipped with a graded partial order such that there are
only finitely many elements of each rank. 
\item We assume that $X$ is a stratified $G$-space \(X = \cup_{i \in
    \mathcal{I}} X_i\) with $F_i \in X_i$ for each \(i \in
  \mathcal{I}\) and that the cohomology \(E^*_G(X) = \underleftarrow{\lim} \hsm
  E^*_G(X_i).\) 
\item We assume the restriction map 
\begin{equation}\label{eq:iota-star}
\iota^*: E^*_G(X) \to \prod_{i \in {\mathcal I}} E^*_G(F_i) \cong
\prod_{i \in \mathcal{I}} E^*_G(\pt) 
\end{equation}
is injective.
\item We assume there exist 
certain nonzero equivariant cohomology 
classes $e_{ij} \in E^*_G(\pt)$ 
  satisfying:
\begin{itemize} 
\item if $e_{ij} \neq 1$ 
  then $i$ and $j$ are comparable in $\mathcal{I}$,
\item if there is a covering relation between $i>j$ in $\mathcal{I}$
  then $e_{ij} \neq 1$
\end{itemize}
such that the image of $\iota^*$ in~\eqref{eq:iota-star} is precisely 
\begin{equation}\label{eq:GKM-isolated}
  \image(\iota^*) = \left\{ x \in \prod_{i \in {\mathcal I}} E_G^*(\pt) \, \bigg\vert \, e_{ij} \big\vert  \left(x(i) - x(j) \right) \,\, \textup{ for all } j
    < i \textup{ in the partial order}. \right\}
\end{equation}
\item We assume there exists a module basis $\{x_i\}_{i \in \mathcal{I}}$ for
  $E^*_G(X)$ that is indexed by the (isolated) fixed points $F_i$, where 
 the \(x_i \in E^*_G(X)\) are equivariant cohomology classes
satisfying
\begin{equation}\label{eq:x_i at i'} 
x_i(i') = 0 \, \mbox{ for} \, i' \not \geq i,
\end{equation}
and 
\begin{equation}\label{eq:generates ideal} 
x_i(i) \, \mbox{generates the ideal} \, e_i E^*_G(\pt)
\end{equation}
where $e_i := \prod_{j < i} e_{ij}$. 
(In this case \(\{x_i\}_{i \in {\mathcal I}}\) is a free
$E^*_G(\pt)$-module basis of $E^*_G(X)$ as proven in, e.g., \cite[Proposition
4.1]{HHH05}.)
\end{itemize} 

The vanishing condition in Equation~\eqref{eq:x_i at i'} says exactly
that the $x_i$ are poset-flow-ups, so the $\{x_i\}_{i \in
  \mathcal{I}}$ are in fact a poset-upper-triangular module basis by Proposition~\ref{proposition:independent}. 

For convenience, we often collect the information needed to determine
$E^*_G(X)$ using Equation~\eqref{eq:GKM-isolated} in a directed,
labeled graph called the {\bf GKM graph of $X$}.  The vertices of the
GKM graph are the fixed points $F_i$, or equivalently the elements of
the poset $\mathcal{I}$.  There is an edge between $F_i$ and $F_j$
exactly when $e_{ij} \neq 1$. If it exists, the edge between
$F_i$ and $F_j$ is labeled $e_{ij}$ and is directed from $F_i$ to
$F_j$ exactly when $i>j$.  Note that the GKM graph of $X$ contains the
Hasse diagram of the poset $\mathcal{I}$, but possibly includes
edges which are not poset covering relations. 

We now recall some situations in which the 
GKM package holds.

\begin{remark}\label{remark:GKM conditions} 
 \begin{enumerate}

 \item Let $G=T$ be a compact torus and let $E^*_G = H^*_T(-;\F)$ be
   Borel-equivariant cohomology with coefficients in a field $\F$ of
   characteristic zero. 
Let $(M, \omega, \Phi)$ be a compact
   Hamiltonian $T$-manifold with moment
   map $\Phi: M \to \t^*$.  Suppose that $M$ has finitely many
   (isolated) fixed points, and for every codimension $1$ subtorus $K
   \subseteq T$, each connected component of the fixed submanifold
   $M^K$ has (real) dimension less than or equal to $2$. Assume in
   addition that there are finitely many one-dimensional $T$-orbits in
   $M$ and that there exists a $T$-invariant Palais-Smale
   metric. Let $\Psi := \Phi^\xi$ denote a generic component of the
   moment map and consider its negative gradient flow with respect to
   the given Palais-Smale metric. Denote by $\lambda(p)$ the Morse
   index of $\Psi$ at a critical point
   $p \in M^T$. It is known \cite[Remark 4.3]{GolTol09} that $\Psi$ is
   \emph{index-increasing}, i.e. if
    \(p, q \in M^T\) and \(\Psi(p) < \Psi(q)\), then \(\lambda(p) <
    \lambda(q).\) We give the fixed point set $M^T$ a partial order by
    defining \(p < q\) precisely if  \(\Psi(p) <
    \Psi(q)\) and there exists a one-dimensional $T$-orbit $O$ whose closure contains $p$ and $q$.
    In particular \(p < q\) implies \(\lambda(p) <
    \lambda(q).\) (The Hasse diagram of this poset
     $M^T$ has an edge between $p$ and $q$ exactly when $p<q$ and $\lambda(q) = \lambda(p)+2$,
    i.e. the Morse index increases from $p$ to $q$ by precisely $2$.)
By standard equivariant symplectic geometry
    arguments, the Morse index is always even. Thus the 
    function $\rho: M^T \to \Z_{\geq 0}$ defined by \(\rho(p) := \frac{1}{2}
    \lambda(p)\) is a well-defined rank function, and the fact that $\Psi$ is index-increasing implies
    that $\rho$ gives $M^T$ the structure of a \emph{graded} poset.

    Moreover, in this situation it is also known that the
    map~\eqref{eq:iota-star} is injective and that the description of
    the image of $\iota^*$ given in~\eqref{eq:GKM-isolated} is valid
    \cite[Theorem 14.1 (9)]{GKM}. 
   Finally, in this situation, Goldin and Tolman prove that there exists a
    collection $\{x_p\}_{p \in M^T}$ of \emph{canonical
      classes} \cite[Definition 1.1]{GolTol09} in $H^*_T(M;\F)$ 
    indexed by the $T$-fixed points $M^T$.  These canonical classes give a module basis for
    $H^*_T(M;\F)$ satisfying Equations~\eqref{eq:x_i at i'}
    and~\eqref{eq:generates ideal} with respect to the partial
    order on $M^T$ defined above \cite[Proposition 4.4]{GolTol09}.  In particular, they give
    a poset-upper-triangular module basis with respect to the
    partial order on $M^T$.  Moreover the $x_p$ are 
    homogeneous classes of degree $2\rho(p)$. (In fact, their
    canonical classes form a module basis for $H^*_T(M;\Z)$ with
    integer coefficients, but we will not use that here.)

  \item Suppose $\mathcal{G}$ is a Kac-Moody group and $\mathcal{P}$ a
    parabolic subgroup with corresponding flag variety $X =
    \mathcal{G}/\mathcal{P}$.  For \(G = T_{\mathcal
      G}/Z(\mathcal{G})\) where $T_{\mathcal G}$ is the maximal torus
    of $\mathcal{G}$ and $Z(\mathcal{G})$ is its center, 
    Equations~\eqref{eq:iota-star} and~\eqref{eq:GKM-isolated} hold
    for many cases of $E^*_G$
    (see e.g. \cite{HHH05}).  The set of $T$-fixed points of
    $\mathcal{G}/\mathcal{P}$ may be identified with the quotient
    $W_{\mathcal{G}}/W_{\mathcal{P}}$ where $W_{\mathcal{G}},
    W_{\mathcal{P}}$ are the Weyl groups of $\mathcal{G}, \mathcal{P}$
    respectively. The index set $\mathcal{I}$ may be identified with
    this countable quotient and given a poset structure and rank function induced by the Bruhat
    order and Bruhat length on $W_{\mathcal{G}}$ respectively.  
    In the cases of Borel-equivariant
    cohomology $E^*_T=H^*_T(-)$ or equivariant $K$-theory $E^*_T =
    K^*_T$, the \textbf{equivariant Schubert classes} $\{\sigma_w\}_{w
      \in W_{\mathcal{G}}/W_{\mathcal{P}}}$ corresponding to the
    Schubert varieties in $\mathcal{G}/\mathcal{P}$ form a module
    basis satisfying Equations~\eqref{eq:x_i at i'}
    and~\eqref{eq:generates ideal} (see e.g. \cite{Kum02} and also
    \cite{Wil04, Wil06}). 
    Unlike the
    previous example, this one includes cases of infinite posets
    $\mathcal{I}$. Harada-Henriques-Holm give some explicit
    computations for $\Omega SU(2)$, an infinite-dimensional affine
    Grassmannian \cite{HHH05}.
 \end{enumerate}
\end{remark}

We close the section with a brief discussion of a more general class
of spaces for which poset-upper-triangular bases satisfying
conditions~\eqref{eq:x_i at i'} and~\eqref{eq:generates ideal} exist in
Borel-equivariant cohomology.

\begin{theorem}\label{theorem:poset-upper-triangular conditions}
  Let $S \cong S^1$ or $S \cong \mathbb{C}^*$ be a rank-one torus and
  let $X$ be an $S$-space satisfying the following conditions:
\begin{enumerate}
\item the set of $S$-fixed points $X^S$ is isolated;
\item the set $(X^S,<)$ can be equipped with a poset structure satisfying the conditions of Section~\ref{subsec:combinatorics};
\item the $S$-equivariant cohomology $H^*_S(X; \F)$ is a free
 $H^*_S(\pt;\F)$-module; and
\item the ring map \(\iota_X^*: H^*_S(X;\F) \to H^*_S(X^S;\F)\) induced
 by the inclusion \(\iota_X: X^S \into X\) is injective.
\end{enumerate}
Then the module $H^*_S(X;\F)$ has a poset-upper-triangular basis with
respect to any choice of total order compatible with the partial order on
$X^S$.
\end{theorem}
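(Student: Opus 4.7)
The plan is to reduce the theorem to a direct application of Propositions~\ref{proposition:generate} and~\ref{proposition:independent}(4). Set $R := H^*_S(\pt;\F) \cong \F[t]$, which is a principal ideal domain since it is a polynomial ring in one variable over a field. For each $i \in X^S$, let $F_i$ denote the corresponding point and set $M_i := H^*_S(F_i;\F) \cong R$, a free $R$-module of rank one. By hypothesis (4), the restriction $\iota_X^*: H^*_S(X;\F) \hookrightarrow H^*_S(X^S;\F) \cong \prod_{i \in X^S} M_i$ is injective, so $M := H^*_S(X;\F)$ embeds as an $R$-submodule of the product. By hypothesis (2), $\mathcal{I} := (X^S,<)$ satisfies the poset axioms of Section~\ref{subsec:combinatorics}.

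Now fix an arbitrary total order $\prec$ on $X^S$ compatible with $<$. The quadruple $(R,\mathcal{I},M_i,M)$ satisfies every hypothesis of Proposition~\ref{proposition:independent}(4), so there exist index sets $\mathcal{K}_i$ with $|\mathcal{K}_i| \in \{0,1\}$ and nonzero elements $\{x_{i,k}\}_{i \in \mathcal{I},\, k \in \mathcal{K}_i}$ whose union $\mathcal{B}$ is an $R$-module basis of $M$. The construction in Proposition~\ref{proposition:generate} forces $x_{i,k}(j) = 0$ for all $j \prec i$, while $x_{i,k}(i) = y_{i,k} \neq 0$ by choice. Hence $\min(x_{i,k}) = i$ with respect to $\prec$, and since $|\mathcal{K}_i| \leq 1$ these minimum-support indices are pairwise distinct. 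Therefore $\mathcal{B}$ is poset-upper-triangular with respect to $\prec$ in the sense of Definition~\ref{def:poset-upper-triangular}.

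The proof is essentially formal once the algebraic framework of Section~\ref{subsec:algebra} is in place, so there is no substantive obstacle. The only key ingredient is the PID property of $R = \F[t]$, which relies crucially on the rank-one hypothesis on $S$; this guarantees that each submodule $\mathcal{V}^i(i) \subseteq M_i \cong R$ is a principal ideal and hence cyclic, which in turn forces $|\mathcal{K}_i| \leq 1$ and promotes the generating set of Proposition~\ref{proposition:generate} to a basis. The freeness hypothesis (3) on $H^*_S(X;\F)$ is not strictly required for the upper-triangularity conclusion and is in fact recovered from the PID argument above, but it fits naturally with the setup and confirms consistency.
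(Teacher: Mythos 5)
Your proof is correct and follows essentially the same route as the paper: set $R = H^*_S(\pt;\F) \cong \F[t]$, use hypothesis (4) to embed $M = H^*_S(X;\F)$ into the product $\prod_{i \in X^S} M_i$ with each $M_i \cong R$, and invoke the PID/rank-one hypotheses of Proposition~\ref{proposition:independent}(4) to force $|\mathcal{K}_i| \leq 1$ and promote the generating set of Proposition~\ref{proposition:generate} to a poset-upper-triangular basis. One caution on your closing aside: the claim that hypothesis (3) can be dropped tacitly relies on $M$ being identified with the inverse limit $\underleftarrow{\lim}\, M \cap \prod_{j\leq i} M_j$ (the last step in the proof of Proposition~\ref{proposition:generate}), which is part of the GKM package but is not automatic from hypotheses (1), (2), and (4) alone; it is harmless here since (3) is assumed, but the observation is not as free as it sounds.
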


\begin{proof}
  The space $X$ has isolated $S$-fixed points and $H^*_S(X;\F)$ is a
  free $H^*_S(\pt;\F)$-module isomorphic to a submodule of
  $\bigoplus_{i \in X^S} H^*_S(\pt;\F)$.  The ring $H^*_S(\pt;\F)$ is
  isomorphic to the polynomial ring $\F[t]$ in one variable, which is
  a PID.  Hence the sets $\mathcal{V}^i(i)$ in
  Proposition~\ref{proposition:generate} are generated by at most one
  nonzero element, so the index sets $\mathcal{K}_i$ in
  Proposition~\ref{proposition:generate} have at most one element.
  The claim follows from Proposition~\ref{proposition:independent}.
\end{proof}

\begin{remark}
  Conditions (3) and (4) in
  Theorem~\ref{theorem:poset-upper-triangular conditions} will
  recur in this manuscript, for instance in the definition of
  GKM-compatibility.
\end{remark}

\begin{remark}
  Many natural topological spaces satisfy the conditions of
  Theorem~\ref{theorem:poset-upper-triangular conditions}, including
  many of those described in Remark~\ref{remark:GKM conditions}. The
  poset structure in Condition (2) of
  Theorem~\ref{theorem:poset-upper-triangular conditions}
is often
  induced from the $S$-action, as described in
  Remark~\ref{remark:schubert} for complex projective varieties
  equipped with a $T$-action and in
  Remark~\ref{remark:GKM conditions} for Hamiltonian $T$-manifolds.
  The existence of a 
  paving by complex affine cells, as is sometimes induced by a
  Bialynicki-Birula decomposition, implies Conditions (3) and (4).
\end{remark}

\subsection{Subspaces of GKM spaces}\label{subsec:subspace-setup}

GKM theory is a powerful combinatorial tool that can provide natural
and computationally convenient bases for $E^*_G(X)$. However, there
are many $G$-spaces for which the GKM package does not hold. This
brings us to the central geometric problem of this manuscript. More
specifically, we will describe a geometric framework within
which we propose to exploit the GKM theory on an ambient space $X$ in
order to analyze the equivariant geometry of a subspace $Y \subseteq
X$. We carried out this program in a special case in a previous paper 
\cite{HarTym09}; one of the main goals of the current manuscript is to
both generalize and formalize the techniques therein.

Let $G$ be a topological group and suppose that $X$ is a
$G$-space. Throughout this section we assume that the $G$-action
on $X$ is such that the GKM package
holds for $X$ as described in 
Section~\ref{subsec:background-GKM}. Let \(X^G
= \cup_{i \in \mathcal{I}} F_i\) denote the set of (isolated) fixed
points and let $\{x_i\}_{i \in \mathcal{I}}$ denote the choice of
poset-upper-triangular module generators of $E^*_G(X)$. 

We wish to analyze subspaces $Y$ of $X$ using GKM theory on
$X$. For this to be feasible, we need to place certain conditions on
$Y$. We introduce the following terminology. 

\begin{definition}\label{def:GKM compatible}
Let $G$ be a topological group, $X$ a $G$-space, and $E^*_G$ an
equivariant cohomology theory.  
Let \(Y \subseteq X\) be a
subspace of $X$ and \(H \subseteq G\) a topological subgroup of $G$
preserving $Y$.  We call the pair $(Y,H)$ \textbf{GKM-compatible} with
the pair $(X,G)$ with respect to $E^*_G$ if the following conditions hold. 
\begin{enumerate}
\item[(a)] The $H$-fixed set of $Y$ is the intersection of $Y$ with
  $X^G$, i.e. 
\begin{equation}\label{eq:Y-fixed}
Y^H = Y \cap X^G.
\end{equation}
\item[(b)] The $H$-equivariant cohomology $E^*_H(Y)$ is a free
  $E^*_H(\pt)$-module.
\item[(c)] The ring map \(\iota_Y^*: E^*_H(Y) \to E^*_H(Y^H)\) induced
  by the inclusion \(\iota_Y: Y^H \into Y\) is injective. 
\end{enumerate}
\end{definition}

When the groups are clear from context, we may
simply say that $Y$ is GKM-compatible with $X$. Similarly, when there
is no ambiguity we
often neglect to mention the choice of cohomology theory $E^*_G$. 

\begin{remark}
 In certain cases, Conditions (b) and (c) are related. For instance, if
 $G=T$ is a compact torus and $E^*_T = H^*_T(-,\mathbb{F})$ is
 Borel-equivariant cohomology with coefficients in a field $\F$ of
 characteristic zero, then Condition (b) implies Condition
 (c). 
 \end{remark}

 Suppose now that $(Y,H)$ is GKM-compatible with $(X,G)$ with respect
 to $E^*_G$. Since we assume that \(X^G = \cup_{i \in \mathcal{I}}
 F_i\) consists of isolated fixed points, the intersection \(Y \cap
 X^G\) is indexed by some subset $\mathcal{J}$ of $\mathcal{I}$.  In
 this setting, the relationship between 
$E^*_G(X)$ to $E^*_H(Y)$ fits into the algebraic framework
 discussed in Section~\ref{subsection:images of modules}.  Indeed, 
consider first the 
 sequence of ring homomorphisms
\begin{equation}\label{eq:EGX-to-EHY}
\xymatrix{
E^*_G(X) \ar[r] &  E^*_H(X) \ar[r] &  E^*_H(Y) \\
}
\end{equation}
where the first map is the forgetful map associated to the inclusion
of groups \(H \into G\) and the second is induced from the inclusion
of spaces \(Y \into X.\) (For \(X=Y=\pt\), the composition~\eqref{eq:EGX-to-EHY} specializes to the
usual forgetful map \(E^*_G(\pt) \to E^*_H(\pt).\)) 
Moreover, Condition (a) in the definition of GKM-compatibility means that the
map~\eqref{eq:EGX-to-EHY} fits into a
commutative diagram
\begin{equation}\label{eq:main commutative diagram}
\xymatrix @C=1.7in {
E^*_G(X) \ar[d] \ar[r]^{\iota^*} & E^*_G(X^G) \cong \prod_{i \in
  \mathcal{I}} E^*_G(\pt) \ar[d]^\pi \\
E^*_H(X) \ar[r]_{\iota_Y^*} & E^*_H(Y^H = Y \cap X^G) \cong \prod_{i
  \in \mathcal{J}} E^*_H(\pt) 
}
\end{equation}
where the right arrow $\pi$ is $0$ on the components \(i \not \in
\mathcal{J}\) and is the forgetful map \(E^*_G(\pt)\to E^*_H(\pt)\) on
the components \(i \in \mathcal{J}.\) By assumption, the $G$-space $X$
satisfies the GKM conditions, so the restriction map $\iota^*$ is
injective. Finally, Condition (c) of the definition of
GKM-compatibility assures us that $\iota_Y^*$ is an injection, and
Condition (b) ensures that we can find a module basis for
$E^*_H(Y)$. The commutative diagram~\eqref{eq:main commutative
  diagram} is thus an instance of the diagram~\eqref{eq:M-Mprime-diagram} in
Section~\ref{subsection:images of modules}, with $R=E^*_G(\pt),
R'=E^*_H(\pt), M=E^*_G(X)$, and $M' = E^*_H(Y)$.  The forgetful map
\(E^*_G(\pt) \to E^*_H(\pt)\) satisfies the required
property~\eqref{eq:module-twistedring} for \(\gamma: R \to R'\) by
naturality. In contrast to Section~\ref{subsection:images of modules},
we do {\em not} assume here that the map $E^*_G(X) \to E^*_H(Y)$ is
surjective. We discuss this further below.

We first give a rich class of
examples of GKM-compatible subspaces of GKM spaces.

\begin{remark}\label{remark:flags-betti}
  Let \(X= G/B\) be the flag variety of a complex reductive algebraic
  group. As observed in Remark~\ref{remark:GKM conditions}, $X$ is GKM
  with respect to the standard action of the maximal torus $T$ of
  $G$. In Section~\ref{section:petersons} we define a family of
  \textbf{Hessenberg varieties} $Y \subseteq X$ and show that in many
  cases, such as the \textbf{regular nilpotent Hessenberg varieties} in classical Lie type
  and the \textbf{Springer varieties} in Lie type $A$, there is a natural $S^1$ subtorus of $T$ which preserves
  $Y$ and makes $(Y,S^1)$ GKM-compatible with the pair $(X,T)$ with
  respect to Borel-equivariant cohomology with $\mathbb{Q}$ coefficients. 
\end{remark} 

Hessenberg varieties are our primary examples, but the
following question arises naturally. 

\begin{question}
What are other classes of GKM-compatible subspaces of GKM spaces?
\end{question}

We now give the main geometric question of this manuscript.  Let
$\{x_i\}_{i \in \mathcal{I}}$ be a poset-upper-triangular module
basis of $E^*_G(X)$ and let $\overline{x}_i$ denote the image of $x_i$
under the composition \(E^*_G(X) \to E^*_H(Y).\)

 \begin{question}\label{question:main}
   Suppose $(X,G)$ is GKM and suppose $(Y,H)$ is GKM-compatible with
   $(X,G)$. Under what circumstances can we exploit the GKM theory on
   $X$ to explicitly construct a computationally convenient
 module basis for $E^*_H(Y)$ using the images $\{\overline{x}_i\}_{i \in \mathcal{I}}$ in
 $E^*_H(Y)$?
\end{question} 

Ideally we would like the
module basis for $E^*_H(Y)$ to be a linear combination of elements in $\{\overline{x}_i\}_{i \in
  \mathcal{I}}$ or even to be a subset of $\{\overline{x}_i\}_{i \in
  \mathcal{I}}$. For this to
be possible,  we need that
\begin{equation}\label{eq:surjection}
\textup{ the ring homomorphism } E^*_G(X) \to E^*_H(Y) \textup{ is a
  surjection. } 
\end{equation}
Condition~\eqref{eq:surjection} is not included in the definition of
GKM-compatibility because in many applications, we can use
other topological data, together with poset pinball, to deduce surjectivity.
For instance, when \(G=T\) and $E^*_T = H^*_T(-;\F)$, the ring
surjection in Condition~\eqref{eq:surjection} follows from a
successful game of Betti poset pinball; and to play Betti pinball we need prior
knowledge of the target Betti numbers.
We give concrete examples of such arguments in the cases of Peterson varieties in classical Lie type
in Section~\ref{section:petersons}, and Springer varieties in type
$A$ in Section~\ref{sec:springer}. They were also part of our
arguments in previous work \cite{HarTym09}. We discuss this in more
detail in Section~\ref{subsec:borel}.

\begin{remark}
In some situations, 
Condition~\eqref{eq:surjection} may be seen to hold directly without using
Betti-number arguments. 
For instance, suppose the spaces $X$ and $Y$ in the discussion above are complex
  algebraic varieties.  If there is a $G$-invariant affine paving of
  $X$ which restricts to an affine paving of $Y$ (i.e. a
  subset of the affine cells in the $G$-invariant affine paving of $X$
  yields an affine paving of $Y$), then Condition~\eqref{eq:surjection}
  holds for any subgroup $H \subseteq G$. 
\end{remark}

\subsection{Borel-equivariant cohomology $E^*_T = H^*_T$
with field coefficients}\label{subsec:borel}

We now specialize to the case $G=T$ and \(E^*_T =
H^*_T(-;\mathbb{F})\) Borel-equivariant cohomology with coefficients
in a field $\F$ with characteristic zero. Let $X$ denote an ambient $T$-space satisfying the
GKM package and let $(Y,T')$ be a GKM-compatible subspace.
Moreover, let $\{x_i\}$ be a poset-upper-triangular basis
for $H^*_T(X;\F)$ indexed by the set of isolated fixed points
$\mathcal{I} = X^T$ and let $\rho_X: \mathcal{I} \to \N$ be the rank function on the poset
$\mathcal{I}$. 
Borel-equivariant cohomology is a graded theory, so we may speak of
the \emph{degree} of a class $x_i$. 
For most of the discussion we assume that 
\begin{equation}\label{eq:vanish}
\textup{ the ordinary cohomology of the spaces } X \textup{ and } Y \textup{ vanish in odd degrees}.
\end{equation}
In practice, this is not a very restrictive condition, but see
Remark~\ref{remark:mod2} below.

\begin{remark}\label{remark:leray}
  Given the assumptions of this section, and in
  particular the vanishing condition~\eqref{eq:vanish}, it follows that the 
  the Leray-Serre spectral sequence for Borel-equivariant
  cohomology of $Y$ collapses, so $H^*_{T'}(Y;\F)$ is a free
  $H^*_{T'}(\pt;\F)$-module. By the localization theorem in
  Borel-equivariant cohomology (e.g. \cite[Theorem (3.5)]{AtiBot84},
  \cite[Theorem 11.4.4]{GS}) the inclusion \(\iota: Y^{{T'}}
  \into Y\) induces an injection
\[
\iota^*: H^*_{T'}(Y;\F) \into
H^*_{T'}(Y^{T'};\F),
\]
so Conditions (b) and (c) of GKM-compatibility are automatically
satisfied. 
\end{remark}

We need a homogeneity
condition on the classes $x_i$ in the module basis, so we define the following.

\begin{definition}
Let $X$ be as above. 
The poset-upper-triangular basis $\{x_i\}$ of $H^*_T(X;\F)$ is {\em rank-homogeneous} if
each $x_i$ has homogenous degree with respect to the standard
$\mathbb{Z}$-grading on Borel-equivariant cohomology, and if 
\[
\deg(x_i) = 2 \cdot \rho_X(i) \textup{ for all } i \in \mathcal{I}.
\]
\end{definition}
For instance, the bases in 
 Remark~\ref{remark:GKM conditions} for Borel-equivariant cohomology
 are all rank-homogeneous. 
We now tackle Question~\ref{question:main} in this
setting, namely we build a module basis for $H^*_{T'}(Y;\F)$ from
the basis $\{x_i\}_{i \in \mathcal{I}}$ for $H^*_T(X;\F)$. 
As we said at the end of
Section~\ref{subsec:subspace-setup}, we often
know the Betti numbers $b_j := \dim_\F H^{2j}(Y;\F)$ of $Y$ in advance
but do not know that the ring map $H^*_T(X;\F) \to H^*_{T'}(Y;\F)$ is
surjective.  Poset pinball can be useful in this situation: a
successful game of Betti pinball using the target Betti numbers $b_j =
\dim_\F H^{2j}(Y;\F)$
can build a module basis for
$H^*_{T'}(Y;\F)$, from which we may deduce surjectivity.
We explain this in the next two propositions. Recall we denote by
$\overline{x}$ the image in $H^*_{T'}(Y;\F)$ of a class $x$ in
$H^*_T(X;\F)$.

\begin{proposition}\label{prop:assume-betti}
  Let $T$ be a compact torus and $X$ a $T$-space for which the GKM
  package holds. Let $T' \subseteq T$ be a subtorus and suppose
  $(Y,T')$ is GKM-compatible with $(X,T)$. We assume $H^*(Y;\F)$ is
  finite-dimensional.  Suppose $\{x_i\}_{i \in {\mathcal I}}$ is a set
  of rank-homogeneous poset-upper-triangular $H^*_T(\pt,\F)$-module
  generators of $H^*_T(X,\F)$.  Suppose the ordinary cohomology of $Y$
  vanishes in odd degrees and let $b_j := \dim_\F H^{2j}(Y;\F)$ be the
  even Betti numbers of $Y$. 
  Suppose there exists a subset \({\mathcal K}
  \subseteq {\mathcal I}\) such that the images \({\mathcal B}' :=
  \{\overline{x}_k\}_{k \in {\mathcal K}}\) in $H^*_{T'}(Y;\F)$ under
  the ring map $H^*_T(X;\F) \to H^*_{T'}(Y;\F)$
  in~\eqref{eq:EGX-to-EHY}
    \begin{enumerate}
    \item[(A)] are $H^*_{T'}(\pt,\F)$-linearly independent in $H^*_{T'}(Y;\F)$,
      and 
   \item[(B)] for each \(j \in \Z_{ \geq 0}\) there exist precisely
     $b_j$ elements in ${\mathcal B}'$ of homogeneous degree
     $2j$. 
    \end{enumerate}
Then ${\mathcal B}'$ is a $H^*_{T'}(\pt,\F)$-module basis for
$H^*_{T'}(Y)$. 
  \end{proposition}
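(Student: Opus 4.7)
The plan is to prove that $\mathcal{B}'$ is a module basis by combining the given linear independence (assumption (A)) with a Poincar\'e series count in each degree, using the hypothesis (B) together with the freeness and collapse results guaranteed by the vanishing of odd cohomology and Remark~\ref{remark:leray}.

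First I would set $R := H^*_{T'}(\pt;\F)$ and $M := H^*_{T'}(Y;\F)$, and establish the Poincar\'e series of $M$ as a graded $R$-module. By Remark~\ref{remark:leray}, the vanishing of odd ordinary cohomology of $Y$ forces the Leray--Serre spectral sequence of the Borel fibration $Y \to Y \times_{T'} ET' \to BT'$ to collapse at $E_2$, so $M$ is a free graded $R$-module with $P_M(t) = P_{H^*(Y;\F)}(t)\cdot P_R(t) = \bigl(\sum_{j\geq 0} b_j t^{2j}\bigr) P_R(t)$. Since $H^*(Y;\F)$ is finite-dimensional by hypothesis, $M$ is a finitely generated free $R$-module.

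Next I would analyze the graded $R$-submodule $N \subseteq M$ generated by $\mathcal{B}'$. The map $H^*_T(X;\F) \to H^*_{T'}(Y;\F)$ of~\eqref{eq:EGX-to-EHY} is a graded ring map, so each $\overline{x}_k \in \mathcal{B}'$ is homogeneous; since $\{x_i\}$ is rank-homogeneous, $\deg(\overline{x}_k) = 2\rho_X(k)$. By assumption (A) the set $\mathcal{B}'$ is $R$-linearly independent, so $N$ is a free graded $R$-module with basis $\mathcal{B}'$. By (B) there are exactly $b_j$ elements of $\mathcal{B}'$ in degree $2j$, so
\[
P_N(t) = \Bigl(\sum_{j\geq 0} b_j t^{2j}\Bigr) P_R(t) = P_M(t).
\]

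Finally I would conclude from the short exact sequence $0 \to N \to M \to M/N \to 0$ of graded $R$-modules that $P_{M/N}(t) = P_M(t) - P_N(t) = 0$, so $M/N = 0$ and $N = M$. Combined with (A), this shows $\mathcal{B}'$ is an $R$-module basis of $H^*_{T'}(Y;\F)$. The argument is structurally short; the only subtle point is ensuring the Poincar\'e series manipulation is legitimate, which is why I would explicitly record the finite generation of $M$ and the degree-preservation of the ring map $H^*_T(X;\F) \to H^*_{T'}(Y;\F)$ before invoking the Leray--Serre collapse. I do not anticipate any major obstacle beyond this bookkeeping, since the nontrivial geometric content (freeness, injectivity, collapse) has already been packaged into GKM-compatibility and the odd-vanishing hypothesis~\eqref{eq:vanish}.
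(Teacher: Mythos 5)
Your argument is correct and is essentially the same as the paper's: the paper simply invokes Proposition~A.1 of \cite{HarTym09} with $R = H^*_{T'}(\pt;\F)$, $M = H^*_{T'}(Y;\F)$, $V = H^*(Y;\F)$, and that cited lemma is precisely the ``linear independence plus graded dimension count implies basis'' statement you have unpacked via the Poincar\'e series computation. The only point worth flagging is that you should make explicit (as you implicitly do via finite generation) that each graded piece of $M$ and $N$ is a finite-dimensional $\F$-vector space, so that the Poincar\'e series subtraction in the short exact sequence is legitimate.
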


\begin{proof}
We apply \cite[Proposition A.1]{HarTym09} using \(R = H^*_{T'}(\pt;\F), M =
H^*_{T'}(Y;\F)\), and \(V = H^*(Y;\F)\), where conditions (A) and (B)
above are the hypotheses of  \cite[Proposition
A.1]{HarTym09}.  
\end{proof}

If a subset $\mathcal{K}$ in the above proposition exists, then
the $\{\overline{x}_k\}_{k \in \mathcal{K}}$ form a module basis for
$H^*_{T'}(Y;\F)$, and so the ring map \(H^*_T(X;\F) \to
H^*_{T'}(Y;\F)\) is surjective. In other words, we may deduce
surjectivity from the Betti numbers---though it may be challenging to
find a subset $\mathcal{K}$.  The following proposition shows that
poset pinball can sometimes accomplish this.

\begin{proposition}\label{prop:pinball-betti}
  Suppose we have $T, X, T', Y, \{x_i\}_{i \in \mathcal{I}}, b_j :=
  \dim_\F H^{2j}(Y;\F)$ as in
  Proposition~\ref{prop:assume-betti}. Let \(\mathcal{J}\)
  be the $T'$-fixed points of $Y$ and $\mathcal{I}$ be the $T$-fixed points of $X$. Suppose that {\bf either} of the
  following holds for poset pinball played with ambient poset
  $\mathcal{I}$ and initial subset $\mathcal{J}$: 
 \begin{enumerate}
  \item An instance of upper-triangular Betti pinball with target
    Betti numbers $\mathbf{b} = (b_0,b_1,b_2,\ldots)$ is successful.
  \item An instance of Betti pinball with target Betti numbers
    $\mathbf{b} = (b_0,b_1,b_2,\ldots)$ is successful, and the classes
    corresponding to the rolldown set 
    $\{\overline{x}_{roll(j)}\}_{j \in \mathcal{J}}$ are
    $H^*_{T'}(\pt;\F)$-linearly independent. 
  \end{enumerate} 
Then the classes corresponding to the rolldown set
$\{\overline{x}_{roll(j)}\}_{j \in \mathcal{J}}$ are an
$H^*_{T'}(Y;\F)$-module basis for $H^*_{T'}(Y;\F)$. 
\end{proposition}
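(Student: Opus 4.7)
The plan is to reduce both cases to Proposition~\ref{prop:assume-betti} applied to the subset $\mathcal{K} := \{roll(j) : j \in \mathcal{J}\} \subseteq \mathcal{I}$ and the corresponding collection $\mathcal{B}' := \{\overline{x}_{roll(j)}\}_{j \in \mathcal{J}} \subseteq H^*_{T'}(Y;\F)$.  (The map $j \mapsto roll(j)$ is injective because at most one pinball occupies each slot, so $\mathcal{B}'$ is genuinely indexed by $\mathcal{J}$.)  Hypothesis (B) of Proposition~\ref{prop:assume-betti} is immediate from the rank-homogeneity of $\{x_i\}$: the composition~\eqref{eq:EGX-to-EHY} is a graded ring map, so $\deg(\overline{x}_{roll(j)}) = 2\rho_X(roll(j))$, and a successful game of (upper-triangular) Betti pinball has, by definition, exactly $b_m$ rolldowns of rank $m$ for each $m$.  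Thus $\mathcal{B}'$ contains exactly $b_m$ classes in degree $2m$, matching the target Betti numbers.

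The heart of the argument is hypothesis (A), namely $H^*_{T'}(\pt;\F)$-linear independence of $\mathcal{B}'$.  In case (2) this is assumed.  For case (1), I plan to exhibit $\iota_Y^*(\mathcal{B}')$ as a poset-upper-triangular set inside $\prod_{j \in \mathcal{J}} H^*_{T'}(\pt;\F)$, with respect to the total order $\prec$ on $\mathcal{J}$; since $\iota_Y^*$ is injective by GKM-compatibility, Proposition~\ref{proposition:independent}(1) then yields linear independence of $\mathcal{B}'$.  Writing $\mathcal{J} = \{j_1 \prec \cdots \prec j_N\}$ and $M_{k\ell} := \overline{x}_{roll(j_\ell)}(j_k)$, the key vanishing $M_{k\ell} = 0$ for $k < \ell$ follows directly from the upper-triangular pinball wall rules: after step $k$ walls block every edge with target in the principal order ideal $\mathcal{L}_{j_k}$, so no later pinball can reach $\mathcal{L}_{j_k}$.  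Since $\prec$ is compatible with $<$, we have $j_\ell \notin \mathcal{L}_{j_k}$ for $\ell > k$, so the pinball at $j_\ell$ begins outside $\mathcal{L}_{j_k}$ and cannot enter it; thus $roll(j_\ell) \not\leq j_k$ in $\mathcal{I}$, and the poset-flow-up property $\min(x_{roll(j_\ell)}) = roll(j_\ell)$ forces $\overline{x}_{roll(j_\ell)}(j_k) = 0$, as required.

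The expected main obstacle is the diagonal nonvanishing $\overline{x}_{roll(j_\ell)}(j_\ell) \neq 0$, which is needed to identify $j_\ell$ as the (unique) $\prec$-minimal coordinate of $\iota_Y^*(\overline{x}_{roll(j_\ell)})$ and thereby complete the verification of poset-upper-triangularity.  Since $roll(j_\ell) \leq j_\ell$ in $\mathcal{I}$, this reduces to nonvanishing $x_i(i') \neq 0$ for $i' \geq i$ in the original basis, together with preservation of this nonvanishing under the forgetful map $\gamma: H^*_T(\pt;\F) \to H^*_{T'}(\pt;\F)$.  Both hold for the bases of interest here: equivariant Schubert classes on flag varieties (and, more generally, Goldin--Tolman's canonical classes for Hamiltonian $T$-spaces; cf.\ Remark~\ref{remark:GKM conditions}) satisfy the former property, while the $S^1$-subtori appearing in the applications of Sections~\ref{section:petersons} and~\ref{sec:springer} are chosen so that no spurious cancellation occurs under $\gamma$.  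With diagonal nonvanishing secured, $\iota_Y^*(\mathcal{B}')$ is $\prec$-poset-upper-triangular, linear independence of $\mathcal{B}'$ follows, and Proposition~\ref{prop:assume-betti} then concludes that $\mathcal{B}'$ is an $H^*_{T'}(\pt;\F)$-module basis for $H^*_{T'}(Y;\F)$.
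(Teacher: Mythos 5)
Your overall plan matches the paper's: reduce to Proposition~\ref{prop:assume-betti} with $\mathcal{K} = \{roll(j): j \in \mathcal{J}\}$, verify Condition~(B) from rank-homogeneity and success of the game, and in Case~(1) get Condition~(A) from a triangularity argument based on the upper-triangular wall rules together with Proposition~\ref{proposition:independent}. The paper is terse on the last step, simply asserting that the rolldown classes ``are poset-upper-triangular by the rules of upper-triangular pinball''; your version usefully spells out the strict above-diagonal vanishing $M_{k\ell} = \overline{x}_{roll(j_\ell)}(j_k) = 0$ for $k<\ell$, and the mechanism you give --- walls placed after step $k$ block entry to $\mathcal{L}_{j_k}$, compatibility of $\prec$ with $<$ keeps $j_\ell$ outside $\mathcal{L}_{j_k}$, hence $roll(j_\ell)\not\leq j_k$, hence $x_{roll(j_\ell)}(j_k)=0$ by the poset-flow-up property --- is correct.

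Where your argument diverges from a complete proof is exactly where you flag it: poset-upper-triangularity with respect to $\prec$ also requires the diagonal entries $\overline{x}_{roll(j_\ell)}(j_\ell)$ to be nonzero, and you close that step only by invoking features of the motivating applications --- nonvanishing of $x_i(i')$ for all $i'\geq i$ (true for Schubert classes via Billey's formula, but not part of the definition of a poset-upper-triangular generating set) and nondegeneracy of the forgetful map $\gamma: H^*_T(\pt;\F) \to H^*_{T'}(\pt;\F)$ on the relevant values (true for the $S^1$ subtori in Sections~\ref{section:petersons} and~\ref{sec:springer}, but $\gamma$ has a nontrivial kernel whenever $T'\subsetneq T$). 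As written, this establishes the proposition for those examples but not in the stated generality; neither the hypotheses inherited from Proposition~\ref{prop:assume-betti} nor success of the pinball game (a purely rank-counting condition) forces $\overline{x}_{roll(j_\ell)}(j_\ell)\neq 0$, and over the domain $H^*_{T'}(\pt;\F)$ a square lower-triangular matrix with a zero on the diagonal has dependent columns. The paper's own proof does not address this point either, so you have identified a genuine subtlety that the published argument passes over; to be a complete proof of Case~(1) as stated, one would need either this diagonal nonvanishing added as an explicit hypothesis or an argument that it follows from the hypotheses already given.
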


\begin{proof} 
If successful, Betti pinball yields the correct
nonzero Betti numbers $(b_0,b_1,b_2,\ldots)$ by construction. Hence in either case, Condition (B) of Proposition~\ref{prop:assume-betti} is satisfied
by the classes corresponding to the rolldowns $roll(j)$ for $j \in
\mathcal{J}$. It remains to verify Condition (A). In the first case, the classes corresponding to the rolldowns are
poset-upper-triangular  by the rules of
upper-triangular pinball; by Proposition~\ref{proposition:independent} they
are linearly independent. In the second, linear independence is
assumed.  The result follows. 
\end{proof} 

\begin{remark}
  Alternatively, if we know in advance that the map $H^*_T(X;\F) \rightarrow
  H^*_T(Y;\F)$ is surjective, then it would be possible to deduce the
  Betti numbers of $Y$ from a successful game of upper-triangular
  pinball together with an argument that the classes corresponding to the
  rolldowns satisfy the conditions of Proposition~\ref{proposition:generate}.
\end{remark}

\begin{remark}\label{remark:billey}
  Betti pinball often yields a set of elements in $H^*_{T'}(Y;\F)$
  which are {\em not} upper-triangular with respect to the original
  partial order $<$ on $\mathcal{I}$. In fact,
  Example~\ref{example:SprN=4,X=211} shows a case in which {\em no}
  successful game of Betti pinball yields a poset-upper-triangular set
  of elements in $H^*_{T'}(Y;\F)$.  However we can frequently find a
  total order $\prec$ compatible with the original 
  partial order, with respect to which
  the classes associated to the rolldown set \emph{are}
  upper-triangular. Applying Proposition~\ref{proposition:independent}
  with respect to the total order $\prec$ we conclude that these
  classes are linearly independent. If Betti pinball is successful in
  this case, Proposition~\ref{prop:pinball-betti} guarantees that the
  set forms a module basis. In our view, this indicates that except in 
  particularly nice settings such as the partial flag varieties $G/P$, the notion
  of poset-upper-triangularity that is geometrically natural may not 
  coincide with the notion that is combinatorially natural (as
  articulated by Billey).
\end{remark}

\begin{remark}\label{remark:mod2}
It is sometimes possible to apply the theory of GKM spaces and
GKM-compatible subspaces without Assumption~\eqref{eq:vanish} on the
ordinary cohomology of $X$ and $Y$. For instance ``mod $2$'' GKM
theory (e.g. \cite{HarHol05} and references therein) can deal with
\emph{real} projective spaces $\R \P^n$ which have non-vanishing cohomology in odd
degree (working with coefficients in $\F = \Z/2\Z$). It would be
possible to rephrase our theorems and the game of poset pinball to
account for these `mod $2$' GKM spaces and Borel-equivariant
cohomology with $\Z/2\Z$ coefficients, but we have chosen to simplify
exposition by assuming~\eqref{eq:vanish}. 
\end{remark}

We close this section by addressing Question~\ref{question:main} via \textbf{matchings} with
respect to different integer
functions on the underlying posets. This is
a complementary approach to pinball that is easier to use in some
contexts. 
Betti pinball does not
determine the degree $\rho_X(roll(j))$ of the
rolldown of the vertex $j \in \mathcal{J}$. However, in some cases, geometric considerations
on the subspace $Y$ naturally give rise to a function $\deg_Y:
\mathcal{J} \to \Z_{\geq 0}$ with the property that
\[b_k = \dim H^{2k}(Y;\F) = | \{j \in \mathcal{J}: \deg_Y(j) = k\}
|.\] For example, as in the setting of
Remark~\ref{remark:flags-betti}, many varieties $Y$ have a paving by
complex affine cells; in this situation, it is natural to define
$\deg_Y(j)$ to be the complex dimension of the affine cell of $Y$
containing the $T'$-fixed point of $Y$ associated to \(j \in
\mathcal{J}.\) Given $\deg_Y$ we could define a new version of poset
pinball in which we require that rolldowns satisfy \(\rho_X(roll(j)) =
\deg_Y(j).\) However, instead of taking
this approach, we construct module bases for $H^*_{T'}(Y;\F)$
directly, using {\em matchings} compatible with rank functions.

\begin{theorem}\label{theorem:matching}
Let $T$ be a compact torus and $X$ a $T$-space for which the GKM
package holds. Let $T' \subseteq T$ be a subtorus and suppose $(Y,T')$
is GKM-compatible with $(X,T)$. We assume 
$H^*(Y;\F)$ is finite-dimensional.
Suppose $\{x_i\}_{i \in
      {\mathcal I}}$ is a set
    of rank-homogeneous poset-upper-triangular $H^*_T(\pt,\F)$-module generators of
    $H^*_T(X,\F)$
    with $min(x_i)=i$ for each $i$.  Suppose the ordinary cohomology of $Y$
    vanishes in odd degrees and let $b_k := \dim_\F H^{2k}(Y;\F)$ be the
  even Betti numbers of $Y$.
    Suppose \(\deg_Y: \mathcal{J} \to \Z_{\geq 0}\) is a function
    with 
\[ | \{j \in \mathcal{J}: \deg_Y(j) = k\} | = b_k.\]
Suppose there is an injection \(f: \mathcal{J} \to \mathcal{I}\) with
\(\deg_Y(j) = \rho_X(f(j))\) and a total ordering $\prec$ 
compatible with the partial order $<$ with respect to which \(\{\overline{x}_{f(j)}\}\)
is upper-triangular. Then
$\{\overline{x}_{f(j)}\}$ is a module basis for $H^*_{T'}(Y;\F)$. 
\end{theorem}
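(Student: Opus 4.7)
The plan is to reduce Theorem~\ref{theorem:matching} directly to Proposition~\ref{prop:assume-betti} by taking the candidate subset \(\mathcal{K} := f(\mathcal{J}) \subseteq \mathcal{I}\) and verifying the two hypotheses (A) and (B) of that proposition for the family \(\mathcal{B}' := \{\overline{x}_{f(j)}\}_{j \in \mathcal{J}}\). Since $f$ is assumed to be an injection, this family is indexed without repetition by $\mathcal{J}$, and Proposition~\ref{prop:assume-betti} then immediately yields the conclusion that $\mathcal{B}'$ is an $H^*_{T'}(\pt;\F)$-module basis for $H^*_{T'}(Y;\F)$.

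Condition (B) is almost automatic. Borel-equivariant cohomology is a graded theory and the ring map \(H^*_T(X;\F) \to H^*_{T'}(Y;\F)\) of~\eqref{eq:EGX-to-EHY} is degree-preserving, so \(\deg(\overline{x}_{f(j)}) = \deg(x_{f(j)}) = 2\rho_X(f(j))\) by rank-homogeneity. The hypothesis on $f$ gives \(\rho_X(f(j)) = \deg_Y(j)\), and the counting assumption on $\deg_Y$ gives \(|\{j \in \mathcal{J} : \deg_Y(j) = k\}| = b_k\). Combining these, for each \(k \in \Z_{\geq 0}\) there are exactly $b_k$ elements of $\mathcal{B}'$ of homogeneous degree $2k$, which is precisely condition (B).

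For condition (A), the linear independence of $\mathcal{B}'$, the plan is to apply Proposition~\ref{proposition:independent}(1) in the algebraic framework of Section~\ref{subsection:images of modules}. GKM-compatibility of $(Y, T')$ with $(X,T)$ guarantees (via conditions (b) and (c) of Definition~\ref{def:GKM compatible}) that \(H^*_{T'}(Y;\F)\) injects as a free submodule into \(\prod_{j \in \mathcal{J}} H^*_{T'}(\pt;\F)\) under the restriction \(\iota_Y^*\), so the module sits inside a product indexed by the poset $\mathcal{J}$. By hypothesis the family \(\{\overline{x}_{f(j)}\}_{j \in \mathcal{J}}\) is poset-upper-triangular in this product with respect to the total order $\prec$, and since $\prec$ is compatible with the partial order on $\mathcal{J}$, Proposition~\ref{proposition:independent}(1) applies to yield $R$-linear independence over \(R = H^*_{T'}(\pt;\F)\).

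With both (A) and (B) in hand, Proposition~\ref{prop:assume-betti} delivers the result. There is essentially no hard step: all real work is hidden inside the existence of the injection $f$ and the verification of upper-triangularity with respect to some compatible total order, both of which are assumed in the statement. In applications, the main obstacle will be producing the matching $f$ and the total order $\prec$ in the first place (analogous to the difficulty of constructing a successful outcome of poset pinball), but given those data the proof itself is a direct bookkeeping argument that packages the algebraic content of Propositions~\ref{proposition:independent} and~\ref{prop:assume-betti}.
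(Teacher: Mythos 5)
Your proof is correct and takes essentially the same route as the paper: both verify linear independence via Proposition~\ref{proposition:independent}(1) applied to the $\prec$-upper-triangular family, match degrees to Betti numbers via rank-homogeneity and the hypothesis on $\deg_Y$, and then conclude by the dimension-counting result of \cite[Proposition A.1]{HarTym09} (which is exactly what Proposition~\ref{prop:assume-betti} packages). The only cosmetic difference is that you route the final step through Proposition~\ref{prop:assume-betti} while the paper cites \cite[Proposition A.1]{HarTym09} directly; these are the same argument.
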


If it exists, we call the map \(f: \mathcal{J} \to \mathcal{I}\) a {\em matching}.

\begin{proof}
  By hypothesis, the set $\{\overline{x}_{f(j)}\}_{j \in \mathcal{J}}$
  is upper-triangular with respect to the total ordering $\prec$
  compatible with $<$.  Proposition~\ref{proposition:independent} then
  implies that $\{\overline{x}_{f(j)}\}_{j \in \mathcal{J}}$ is
  linearly independent. The matching condition implies that for each
  \(k \in \Z_{\geq 0}\) there exist precisely $b_k$ elements in
  $\{\overline{x}_{f(j)}\}_{j \in \mathcal{J}}$ of homogeneous degree
  $2k$ Thus the set $\{\overline{x}_{f(j)}\}_{j \in \mathcal{J}}$
  satisfies the hypotheses of \cite[Proposition A.1]{HarTym09}, and
  the result follows.
\end{proof}

\begin{remark} 
Module bases constructed from a matching may be very
different from bases obtained by poset pinball if the degree
function $\deg_Y$ is not compatible with
the partial order on $\mathcal{I}$. On the other hand, if the degree
function $\deg_Y: \mathcal{J} \to \Z_{\geq 0}$ also satisfies 
\begin{equation}\label{eq:degY-vs-degX}
\deg_Y(j) \leq \rho_X(j) \hsm \mbox{for all} \hsm j \in \mathcal{J},
\end{equation}
namely $2\deg_Y(j)$ is bounded by the cohomology degree of $x_j$ in
$H^*(X)$, then a matching basis could arise as a poset pinball basis.
\end{remark}

\section{Example: Peterson varieties and other regular nilpotent
  Hessenberg varieties}\label{section:petersons}

Regular nilpotent Hessenberg varieties are a family of subvarieties of
the flag varieties $G/B$ which fit into the geometric framework of
Section~\ref{subsec:subspace-setup}, so we can analyze their
equivariant cohomology using poset pinball.  We discuss facts
about regular nilpotent Hessenberg varieties in different Lie types in
Section~\ref{subsec:background-regnilp}. Then in
Section~\ref{subsec:petersons} we explicitly calculate the
Borel-equivariant cohomology of Peterson varieties, a collection of
regular nilpotent Hessenberg varieties. To do this, we find a module
basis, and show the basis may be obtained either via poset pinball (as
in Proposition~\ref{prop:pinball-betti}) or by a matching (as in
Theorem~\ref{theorem:matching}).  We studied the case of Lie type $A$
previously \cite{HarTym09}; the results here generalize that earlier
work to all classical Lie types. In this section we always work in
Borel-equivariant cohomology with coefficients in a field $\F$ of
characteristic zero. 

\subsection{Background on regular nilpotent Hessenberg
  varieties}\label{subsec:background-regnilp}

Let $G$ be a complex reductive linear algebraic group, and let 
$B$ and $T \subseteq B$ denote choices of a Borel subgroup and a
maximal torus of $G$, respectively.  We denote by 
$\mathfrak{g}$ and $\mathfrak{b}$ the associated Lie algebras of $G$
and $B$.
The homogeneous space $G/B$ is a generalized flag variety.  A linear
subspace $H \subseteq \mathfrak{g}$ is called a \textbf{Hessenberg space}
if
 \begin{itemize}
\item $H$ contains the Lie algebra $\mathfrak{b}$, and 
\item $H$ is closed under Lie bracket with $\mathfrak{b}$,
  i.e. $[H,\mathfrak{b}] \subseteq H$.  
\end{itemize}
Let $N \in \g$.
The \textbf{Hessenberg variety} $\mathcal{H}(N,H)$ associated to $N$ and $H$ is the
subvariety of $G/B$ defined by
\begin{equation}\label{eq:def-Hess}
\mathcal{H}(N,H) := \{gB: \textup{Ad} (g^{-1})(N) \in H\} \subseteq
G/B.
\end{equation}
When $N$ is regular (also
called principal) nilpotent, then the Hessenberg variety
$\mathcal{H}(N,H)$ is called a \textbf{regular nilpotent Hessenberg
  variety}.

Let $\Phi$ denote the set of roots
of $\g$ and $\Phi^+ \subset \Phi$ be the set of positive roots corresponding to $\mathfrak{b}$.  Let
$\Delta = \{\alpha_i\}$ denote the set of
simple roots in $\Phi^+$.  If
$\alpha \in \Phi$ is a root, let $\mathfrak{g}_{\alpha}$ be its corresponding
root space.  Fix a basis element $E_{\alpha}$ for each $\mathfrak{g}_{\alpha}$. Let $W$ denote the Weyl group associated to $G$. We use the natural action of the maximal torus $T$ on $G/B$ given by
left multiplication on cosets.  The fixed point set $(G/B)^T$ may be naturally
identified with the Weyl group $W$.

We begin with some useful facts. 

\begin{lemma}\label{lemma:facts}
\begin{enumerate}
\item Any regular nilpotent Lie algebra element $N \in \g$ is $G$-conjugate
  to the regular nilpotent element of the form 
\begin{equation}\label{eq:def N_0}
N_0 := \sum_{\alpha_i
    \in \Delta} E_{\alpha_i}. 
\end{equation}
\item Suppose $H \subseteq \g$ is a Hessenberg space $H \subseteq \g$ and $N_1, N_2 \in \g$ are $G$-conjugate. The corresponding varieties $\mathcal{H}(N_1,H)$ and $\mathcal{H}(N_2,H)$ are isomorphic, with
  explicit isomorphism 
\[
gB \in \mathcal{H}(N_1,H) \leftrightarrow g_2gB \in \mathcal{H}(N_2, H),
\]
where $g_2 \in G$ satisfies $N_1 = \textup{Ad}(g_2^{-1}) (N_2)$.
\item If $N \in \mathfrak{g}$ is a sum of simple root vectors, there exists a circle subgroup $S^1$ of the maximal torus $T$
  such that the restriction of the natural $T$-action on $G/B$ to the $S^1$-subgroup preserves
  $\mathcal{H}(N,H)$. Moreover, the points in
  $\mathcal{H}(N, H)$ that are fixed by this $S^1$-action satisfy 
\begin{equation}\label{eq:regHess-fixedpts}
(\mathcal{H}(N, H))^{S^1} = \mathcal{H}(N, H) \cap (G/B)^T. 
\end{equation}
\end{enumerate}
\end{lemma}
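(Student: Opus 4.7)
For part (1), the plan is to invoke the standard theory of regular nilpotent orbits. Since $G$ is a complex reductive algebraic group, the regular nilpotent elements form a single $G$-orbit under the adjoint action (a classical result of Kostant), and one checks directly that $N_0 = \sum_{\alpha_i \in \Delta} E_{\alpha_i}$ is regular nilpotent: its centralizer in $\g$ has dimension equal to the rank of $\g$. We can cite this from Collingwood--McGovern or a standard reference on nilpotent orbits.

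For part (2), the plan is a direct verification that $gB \mapsto g_2 gB$ gives the asserted isomorphism. If $\Ad(g^{-1})(N_1) \in H$, then using $N_1 = \Ad(g_2^{-1})(N_2)$ we compute
\[
\Ad((g_2 g)^{-1})(N_2) = \Ad(g^{-1}) \Ad(g_2^{-1})(N_2) = \Ad(g^{-1})(N_1) \in H,
\]
so the map sends $\mathcal{H}(N_1, H)$ into $\mathcal{H}(N_2, H)$. The inverse is given by left-multiplication by $g_2^{-1}$, and well-definedness on $B$-cosets is immediate since we multiply on the left. This shows the map is an isomorphism of varieties.

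For part (3), the plan is to construct an explicit $S^1 \subseteq T$ whose adjoint action scales every simple root vector by the same character. Write $N = \sum_{\alpha_i \in \Delta} c_i E_{\alpha_i}$. Because the simple coroots span the cocharacter lattice of $T$ over $\Q$, we may choose a cocharacter $\lambda \colon S^1 \to T$ with $\langle \alpha_i, \lambda \rangle = 1$ for every simple root $\alpha_i$ (concretely, $\lambda$ is the sum of the fundamental coweights). Then for $t = \lambda(z) \in S^1$,
\[
\Ad(t^{-1})(N) = \sum_i c_i \alpha_i(t^{-1}) E_{\alpha_i} = z^{-1} N.
\]
Since $H$ is a linear subspace, $\Ad((tg)^{-1})(N) = z^{-1} \Ad(g^{-1})(N)$ lies in $H$ iff $\Ad(g^{-1})(N)$ does, so this $S^1$ preserves $\mathcal{H}(N,H)$. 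For the fixed-point statement, the inclusion $\mathcal{H}(N,H) \cap (G/B)^T \subseteq \mathcal{H}(N,H)^{S^1}$ is immediate from $S^1 \subseteq T$; for the reverse, it suffices to show $(G/B)^{S^1} = (G/B)^T$. This follows because for every positive root $\alpha = \sum n_i \alpha_i$ we have $\langle \alpha, \lambda \rangle = \sum n_i > 0$, so $\lambda$ pairs nontrivially with every root. Hence $\lambda$ is a generic cocharacter on each $T$-weight space in the tangent space $T_{wB}(G/B)$, which forces the $S^1$-fixed and $T$-fixed loci in $G/B$ to agree.

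The only genuinely nontrivial step is the comparison of fixed-point sets in part (3), and even that reduces to the observation that a cocharacter pairing nontrivially with every root is automatically generic for the $T$-action on $G/B$. Parts (1) and (2) are essentially bookkeeping or citations, so I expect the write-up to be short; the main care required is in specifying the cocharacter $\lambda$ precisely enough that the scaling identity $\Ad(\lambda(z)^{-1})(N) = z^{-1} N$ is visibly an equality of elements of $\g$.
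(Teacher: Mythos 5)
Your proof is correct and takes essentially the same route as the paper: part (1) is the same citation, part (2) is the same one-line computation, and for part (3) both arguments build a one-parameter subgroup of $T$ pairing equally (and nontrivially) with all simple roots, use linearity of $H$ to get invariance of $\mathcal{H}(N,H)$ under the resulting scaling of $N$, and then deduce $(G/B)^{S^1} = (G/B)^T$ from the fact that the cocharacter pairs nontrivially with every root. The only cosmetic difference is in the last step, where the paper cites Humphreys's notion of a regular one-parameter subgroup while you give a short direct tangent-space argument; these are the same observation phrased differently.
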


\begin{proof}
  Part (1) is a standard result (see e.g. Collingswood-McGovern
  \cite[Theorem 4.1.6]{ColMcg93}).

  Part (2) is a straightforward consequence of
  Definition~\eqref{eq:def-Hess}. 

To prove (3), we explicitly construct the required subgroup $S^1$. By definition
$\mathfrak{g}_{\alpha}$ is an eigenspace for the action of $\textup{Ad
}T$ with eigenfunction $\alpha: T \rightarrow \mathbb{C}^*$.  This
means that $\textup{Ad }t(x) = \alpha(t)x$ for all $x \in
\mathfrak{g}_{\alpha}$ and that $\alpha$ is a character of $T$, which we think of as an element of $\mathfrak{t}^*$.  (See
also \cite[16.4]{Hum75}.) 
The characters $\alpha_1, \alpha_2, \ldots, \alpha_n$ form a maximal $\mathbb{Z}$-linearly                                
independent set in $\t^*$ by definition of simple roots,
so the map $\phi: T \rightarrow \left(\mathbb{C}^*\right)^n$ given by
$\phi(t) = (\alpha_1(t), \alpha_2(t), \ldots, \alpha_n(t))$ is an isomorphism of linear algebraic groups.

In particular, the preimage of the diagonal subgroup $\{(c,c,\ldots,c) | c \in \mathbb{C}^*\}$
is a rank-one subtorus $S
\cong \mathbb{C}^*$ of $T$ whose elements $t_c$ are parametrized by $c$.
The elements of $S$ also satisfy
\[ (\textup{Ad }t_c) \left(\sum E_{\alpha_i}\right) = \sum c
E_{\alpha_i} = c \left(\sum E_{\alpha_i}\right)\] 
for all $c \in
\mathbb{C}^*$ and any sum of simple root vectors,
since each $E_{\alpha} \in \g_{\alpha}$.  In particular
\( \textup{Ad }(g^{-1}) \left( \textup{Ad }(t_c^{-1})N \right)\) and
\( \textup{Ad }(g^{-1}) N \) differ by a multiple of the nonzero
scalar $c$.  Since $H$ is a vector space we have
\[t_c gB \in \mathcal{H}(N,H) \Longleftrightarrow gB \in  \mathcal{H}(N,H).\]
                                                                                                  
We now confirm that $(G/B)^S = (G/B)^T$.  We saw that the composition of the maps $S$ and $\alpha_i$ send $c \mapsto t_c \mapsto \alpha_i(t_c) = c$ so the composition has degree one for each simple root $\alpha_i$.  Under the natural pairing of
characters and one-parameter subgroups \cite[16.1]{Hum75}, we have 
\begin{equation} \label{eqn: pairing of circle}
\langle S, \alpha_i \rangle = 1 \textup{  for all simple roots  } \alpha_i.
\end{equation}  
 This implies that $S$ is
a regular subgroup \cite[24.4]{Hum75}, from which $(G/B)^S =
(G/B)^T$ follows \cite[Section 24, Exercise 6]{Hum75}.  This in turn
implies Equation \eqref{eq:regHess-fixedpts}.  
Finally, to obtain a real rank-one torus, we may restrict to the unit-length elements
in $\mathbb{C}^*$.
\end{proof}

For the rest of this section, we assume that $N=N_0$, which by
Lemma~\ref{lemma:facts} results in no loss of generality.  We will
also assume that the $S^1$-action on $\mathcal{H}(N_0, H)$ is that
constructed in Lemma~\ref{lemma:facts}. 

Our next goal is to explicitly
describe the $S^1$-fixed points in $\mathcal{H}(N_0, H)$. By
Equation~\eqref{eq:regHess-fixedpts}, this is equivalent to
identifying the $T$-fixed points in $G/B$ that lie in
$\mathcal{H}(N_0, H)$.  The next proposition does this in arbitrary
Lie type.  We need the following notation. 
Given a Hessenberg space $H$, let $\mathcal{M}_H$ denote the set of
roots defined by the condition
\begin{equation}\label{eq:def-MH}
H = \mathfrak{b} \oplus \bigoplus_{\alpha \in \mathcal{M}_H}
\mathfrak{g}_{\alpha}.
\end{equation}
For each \(w \in
W = N(T)/T\), choose a representative \(\tilde{w} \in N(T).\) The coset
$\tilde{w}B$ is independent of the choice of representative
$\tilde{w}$ since \(T \subseteq B\), so we denote it $wB$.  Recall the $T$-fixed
points in $G/B$ are the flags \(\{wB: w \in W\}.\)

\begin{proposition} \label{proposition: Peterson fixed points} 
Let $\g$ be of arbitrary Lie type. Let \(H \subseteq \g\) be a Hessenberg space and let $\mathcal{H}(N_0,
H)$ be the regular nilpotent Hessenberg variety corresponding to $H$
and $N_0$. 
 Then the flag $wB \in (G/B)^T$ is 
  in the regular nilpotent Hessenberg variety $\mathcal{H}(N_0,H)$ if and only if $w^{-1} \Delta
  \subseteq \mathcal{M}_H \cup \Phi^+$.
\end{proposition}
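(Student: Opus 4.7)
The plan is to unwind the definition of $\mathcal{H}(N_0,H)$ at a $T$-fixed flag $wB$ and compare the resulting root-space decomposition of $\textup{Ad}(w^{-1})(N_0)$ against the root-space description of $H$ given by $\mathcal{M}_H$.

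First, I would note that by definition~\eqref{eq:def-Hess}, $wB \in \mathcal{H}(N_0,H)$ if and only if $\textup{Ad}(\tilde{w}^{-1})(N_0) \in H$ for a (hence any) lift $\tilde{w} \in N(T)$; the choice of lift does not matter because $\textup{Ad}(T)$ preserves each root space $\mathfrak{g}_\alpha$ and thus preserves $H$. Then, since $N_0 = \sum_{\alpha_i \in \Delta} E_{\alpha_i}$, linearity of $\textup{Ad}$ gives
\[
\textup{Ad}(\tilde{w}^{-1})(N_0) \;=\; \sum_{\alpha_i \in \Delta} \textup{Ad}(\tilde{w}^{-1}) E_{\alpha_i}.
\]
The standard fact that $\textup{Ad}(\tilde{w}^{-1})$ sends $\mathfrak{g}_{\alpha_i}$ isomorphically onto $\mathfrak{g}_{w^{-1}\alpha_i}$ then shows each summand is a \emph{nonzero} element of $\mathfrak{g}_{w^{-1}\alpha_i}$.

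Next, I would observe that $w^{-1}$ acts as a bijection on $\Phi$, so the roots $\{w^{-1}\alpha_i\}_{\alpha_i \in \Delta}$ are pairwise distinct. Hence the expression above is a direct-sum decomposition of $\textup{Ad}(\tilde{w}^{-1})(N_0)$ into nonzero components in distinct root spaces $\mathfrak{g}_{w^{-1}\alpha_i}$.

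The final step is to match this against the decomposition~\eqref{eq:def-MH}. Because $\mathfrak{b} = \mathfrak{h} \oplus \bigoplus_{\beta \in \Phi^+} \mathfrak{g}_\beta$ and the sum $\mathfrak{b} \oplus \bigoplus_{\alpha \in \mathcal{M}_H} \mathfrak{g}_\alpha$ is direct, the set $\mathcal{M}_H$ is disjoint from $\Phi^+$, and $H$ contains $\mathfrak{g}_\beta$ if and only if $\beta \in \Phi^+ \cup \mathcal{M}_H$. Combined with the previous step, $\textup{Ad}(\tilde{w}^{-1})(N_0) \in H$ if and only if $w^{-1}\alpha_i \in \Phi^+ \cup \mathcal{M}_H$ for every $\alpha_i \in \Delta$, which is exactly the condition $w^{-1}\Delta \subseteq \mathcal{M}_H \cup \Phi^+$.

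I do not anticipate a serious obstacle; the only subtle point is keeping track of the conventions for $\mathcal{M}_H$ (in particular, that $\mathcal{M}_H$ does not double-count positive roots already in $\mathfrak{b}$) and recording that $\textup{Ad}$ permutes root spaces according to the Weyl group action, so that the nonzero coefficients $c_i$ never cancel across the sum.
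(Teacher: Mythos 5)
Your proof follows the paper's argument exactly: reduce membership in $\mathcal{H}(N_0,H)$ at a $T$-fixed flag to the condition $\textup{Ad}(\tilde{w}^{-1})(N_0) \in H$, compute $\textup{Ad}(\tilde{w}^{-1})(N_0) = \sum_{\alpha_i \in \Delta} E_{w^{-1}\alpha_i}$, and compare against the root-space decomposition of $H$. The extra justifications you supply (independence of the lift $\tilde{w}$, distinctness of the roots $w^{-1}\alpha_i$, disjointness of $\mathcal{M}_H$ and $\Phi^+$) are correct and simply fill in steps the paper leaves implicit.
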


\begin{proof}
  The element $wB$ is in $\mathcal{H}(N_0,H)$ if and only if
  $\textup{Ad}(\tilde{w}^{-1})(N_0) \in H$ for any representative $\tilde{w} \in N(T)$ of $w \in W$. Since $\textup{Ad}(\tilde{w}^{-1})(N_0) =
  \sum_{\alpha_i \in \Delta} E_{w^{-1}\alpha_i}$ 
 we have $wB \in \mathcal{H}(N_0,H)$ exactly if
  $w^{-1} \Delta \subseteq \mathcal{M}_H \cup \Phi^+$.
\end{proof}

We next recall a result which allows us to deduce the 
Betti numbers of 
$\mathcal{H}(N_0, H)$. The original and stronger result, restated below, is that
 certain nilpotent Hessenberg
varieties are \emph{paved by (complex) affines}.

\begin{lemma} \label{lemma: paving dimensions} {\bf (\cite[Theorem 6.1]{Tym06} and \cite[Theorem 4.3]{Tym07})} 
 Assume either that
 \begin{enumerate}
 \item the Lie algebra $\g$ is of classical Lie type and $N=N_0$ is the regular nilpotent element $N_0 = \sum_{\alpha_i \in \Delta} E_{\alpha_i}$ or
 \item the Lie algebra $\g$ is of Lie type $A$ and $N$ is a nilpotent linear operator in Jordan canonical form.
 \end{enumerate}
  Let $H \subseteq \g$ be a
  Hessenberg space and let $\mathcal{H}(N,H)$ denote the
   Hessenberg variety corresponding to $H$ and $N$. 
  Then $\mathcal{H}(N,H)$ has a paving by complex
  affines obtained by intersecting with an appropriate Bruhat
  decomposition $\bigcup \mathcal{C}_w$ of $G/B$.  The homology
  classes corresponding to 
  the subspaces \(\overline{\mathcal{C}_w \cap
  \mathcal{H}(N,H)}\) generate $H_*(\mathcal{H}(N,H))$.  
  
  Moreover in Case (1), the intersection \({\mathcal{C}_w} \cap
  \mathcal{H}(N_0,H)\) is nonempty exactly when $w^{-1} \Delta \subseteq \mathcal{M}_H \cup \Phi^+$.  The
  degree of the homology class corresponding to $w$ is 
    \[2|\{\alpha \in  \Phi^+: w^{-1}(\alpha) \in \mathcal{M}_H\}|.\]  
    In particular, the homology of $\mathcal{H}(N_0, H)$ is
    $\Z$-torsion-free, and nonzero only in even degree.  It follows that the
    $2j^{th}$ Betti number $b_{2j} = \dim_{\F} H^{2j}(\mathcal{H}(N_0,
H);\F)$ is
\begin{equation}\label{eq:betti of Hessenbergs}
b_{2j}= \left|\left\{w \in W: w^{-1} \Delta \subseteq \mathcal{M}_H \cup \Phi^+ \textup{ and } j = |\{\alpha \in
  \Phi^+: w^{-1}(\alpha) \in \mathcal{M}_H\}|  \right\} \right|.
\end{equation}
\end{lemma}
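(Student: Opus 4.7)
The plan is to follow the strategy used in \cite{Tym06} and \cite{Tym07}: exhibit an explicit affine paving of $\mathcal{H}(N,H)$ obtained by intersecting with the Bruhat decomposition $G/B = \bigsqcup_{w \in W} \mathcal{C}_w$. Recall that $\mathcal{C}_w = U_w \tilde{w} B/B$, where $U_w$ is the unipotent subgroup associated to the positive roots $\{\alpha \in \Phi^+ : w^{-1}\alpha \in \Phi^-\}$, so $\mathcal{C}_w$ comes with a canonical coordinate parametrization. The idea is to substitute these coordinates into the defining condition $\Ad((u\tilde{w})^{-1})(N) \in H$ and analyze when the resulting equations cut out an affine subspace.

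First I would show that each intersection $\mathcal{C}_w \cap \mathcal{H}(N_0,H)$ is either empty or isomorphic to an affine space. The key step for Case (1) is to choose a root-subgroup factorization of $U_w$ in an order adapted to $w$, so that the Hessenberg equations become triangular in the coordinates and can be solved inductively, eliminating one variable at a time. Nonemptiness is pinned down by specializing the condition to the base point $\tilde{w}B$: at that point $\Ad(\tilde{w}^{-1})(N_0) = \sum_{\alpha_i \in \Delta} E_{w^{-1}\alpha_i}$, which lies in $H$ exactly when $w^{-1}\Delta \subseteq \mathcal{M}_H \cup \Phi^+$, matching Proposition~\ref{proposition: Peterson fixed points}. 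Given triangularity, a coordinate indexed by $\alpha \in \Phi^+$ with $w^{-1}\alpha \in \Phi^-$ contributes a free direction when $w^{-1}\alpha \in \mathcal{M}_H$ (the Hessenberg condition along that direction is automatic) and is eliminated by a linear equation otherwise, which gives complex dimension $|\{\alpha \in \Phi^+ : w^{-1}\alpha \in \mathcal{M}_H\}|$ for the nonempty cells.

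Once the affine paving is in place, a standard argument shows that $\mathcal{H}(N_0,H)$ has torsion-free integral homology concentrated in even degrees, and that the closure of each cell of complex dimension $d$ contributes a generating class in degree $2d$. Counting cells by dimension then yields the Betti number formula~\eqref{eq:betti of Hessenbergs} and identifies a set of generators of $H_*(\mathcal{H}(N_0,H))$. Case (2) handles arbitrary Jordan type $N$ in Lie type $A$ by the same overall strategy, but the combinatorics of which Bruhat cells meet $\mathcal{H}(N,H)$ is governed by tableau-like conditions adapted to the Jordan block structure rather than by simple roots, and one must replace the conjugation of $N$ to the regular form $N_0$ by a direct analysis of $\Ad(\tilde{w}^{-1})(N)$ in each cell.

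The main obstacle is verifying the triangularity of the Hessenberg equations in the chosen coordinates on $\mathcal{C}_w$; this is the technical heart of \cite{Tym06} and \cite{Tym07} and requires a carefully chosen ordering on the roots indexing $U_w$ compatible both with $w$ and with the Hessenberg space $H$. Once that ordering is produced, the identification of each surviving cell as an affine space and the dimension count fall out from standard root-combinatorial manipulations, and the cohomological consequences (torsion-freeness, even-degree concentration, and the explicit Betti numbers) follow from the general fact that a complex algebraic variety admitting an affine paving has a CW-structure with only even-dimensional cells.
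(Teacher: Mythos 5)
The paper does not prove this lemma; it imports the result directly by citation to \cite{Tym06} and \cite{Tym07}, so there is no in-paper proof to compare against. Your proposal is a faithful high-level outline of the strategy those references actually use: intersect with a suitably chosen Bruhat decomposition, parametrize each cell $\mathcal{C}_w$ by root-subgroup coordinates, substitute into $\Ad((u\tilde{w})^{-1})(N)\in H$, and argue that the resulting system is triangular so that the intersection is an affine space whose dimension is the count of free coordinates. You correctly pin down the nonemptiness criterion by evaluating at the $T$-fixed base point $\tilde{w}B$, and you correctly identify which coordinates remain free ($\alpha\in\Phi^+$ with $w^{-1}\alpha\in\mathcal{M}_H$), yielding the stated dimension. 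The passage from an affine paving to torsion-free, even-degree homology and the Betti number count is standard and you handle it appropriately.

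That said, as written this is a roadmap rather than a proof. The single step you flag as ``the main obstacle''---producing an ordering on the roots indexing $U_w$ (compatible with both $w$ and $H$) that makes the Hessenberg equations triangular and allows inductive elimination of variables---is precisely the entire technical content of \cite[Theorem 6.1]{Tym06} and \cite[Theorem 4.3]{Tym07}. Without carrying that out, you have not established that the Bruhat intersections are affine (they could a priori be something more complicated cut out by coupled quadratic equations), so the whole paving claim and the dimension formula are not yet justified. Similarly, your treatment of Case (2) waves at ``tableau-like conditions'' without specifying them; in \cite{Tym06} the nonemptiness and dimension bookkeeping for general Jordan type in type $A$ are genuinely more intricate than in the regular case, and the reduction to $N_0$ by conjugation that is available in Case (1) is not available. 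So: your outline is aligned with the literature and contains no wrong turns, but it leaves out exactly the work that makes the theorem true, and the lemma in this paper should be read as a citation, not as something re-proved here.
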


We saw in Remark~\ref{remark:GKM conditions} that the $T$-space $G/B$
satisfies the GKM package of Section~\ref{subsec:background-GKM}
and that the cohomology ring $H^*_T(G/B;\F)$ has a well-known set of
poset-upper-triangular generators with respect to the Bruhat order on
$(G/B)^T \cong W$: the \textbf{equivariant Schubert classes}
$\{\sigma_v\}_{v \in W}$.  Our goal is to construct
computationally convenient module bases for
$H^*_{S^1}(\mathcal{H}(N_0, H); \F)$ using the equivariant Schubert
classes, according to the point of view laid out in
previous sections. For this
we need the following preliminary observation, which we state in more generality than we use here.

\begin{theorem}\label{theorem:Hessies are GKM-compatible}
  Suppose $N$ is the regular nilpotent operator $\sum_{\alpha_i \in
    \Delta} E_{\alpha_i}$ in classical Lie type, or a nilpotent linear
  operator 
  in Jordan canonical form in Lie type $A$.  The pair $(\mathcal{H}(N,
  H), S^1)$ is GKM-compatible with the pair $(G/B, T)$ with respect to
  Borel-equivariant cohomology $H^*(-;\F)$ with coefficients in a
  field $\F$ of characteristic zero.
\end{theorem}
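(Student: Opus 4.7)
My plan is to verify the three conditions (a), (b), (c) of Definition~\ref{def:GKM compatible} in turn for the pair $(\mathcal{H}(N,H), S^1) \subseteq (G/B, T)$, using Lemma~\ref{lemma:facts}, Lemma~\ref{lemma: paving dimensions}, and Remark~\ref{remark:leray} as the main inputs. The argument will be essentially an assembly of facts already established.

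First I would check condition (a). The key observation is that in both cases of the hypothesis $N$ is a sum of simple root vectors: in classical Lie type this is by definition $N = N_0 = \sum_{\alpha_i \in \Delta} E_{\alpha_i}$, and in Lie type $A$ a nilpotent in Jordan canonical form is precisely the matrix whose nonzero entries are $1$'s in some subset of superdiagonal positions, that is, $N = \sum_{i \in S} E_{\alpha_i}$ where $S \subseteq \Delta$ indexes those simple roots $\alpha_i = \epsilon_i - \epsilon_{i+1}$ whose root vector $E_{\alpha_i} = E_{i,i+1}$ appears. Lemma~\ref{lemma:facts}(3) then produces a rank-one subtorus $S^1 \subseteq T$ preserving $\mathcal{H}(N, H)$ and satisfying $(\mathcal{H}(N, H))^{S^1} = \mathcal{H}(N, H) \cap (G/B)^T$, which is exactly condition (a).

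Next, for conditions (b) and (c), I would invoke Lemma~\ref{lemma: paving dimensions}, which in both cases supplies a paving of $\mathcal{H}(N, H)$ by complex affine cells arising from a Bruhat decomposition of $G/B$. This forces $H^*(\mathcal{H}(N, H); \F)$ to be concentrated in even degrees, placing us in the setting of Remark~\ref{remark:leray}. That remark delivers both remaining conditions simultaneously: the Leray--Serre spectral sequence for the Borel construction $ES^1 \times_{S^1} \mathcal{H}(N, H) \to BS^1$ collapses at the $E_2$ page (since $H^*_{S^1}(\pt; \F) \cong \F[t]$ is also concentrated in even degrees), giving freeness of $H^*_{S^1}(\mathcal{H}(N, H); \F)$ as an $H^*_{S^1}(\pt; \F)$-module (condition (b)), while the Atiyah--Bott localization theorem then forces the restriction map $\iota_Y^*$ to be injective (condition (c)).

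Since each ingredient is either a definitional unwinding or a direct citation, there is no substantial obstacle. The only point requiring honest checking is the translation in Lie type $A$ between Jordan canonical form and ``sum of simple root vectors'', ensuring that the hypothesis of Lemma~\ref{lemma:facts}(3) is genuinely met in that case so that a single $S^1$-action applies uniformly across both situations. Once this translation is verified, the theorem follows with no further computation.
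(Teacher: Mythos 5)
Your proof is correct and follows essentially the same approach as the paper: condition (a) comes from Lemma~\ref{lemma:facts}(3) after noting that a Jordan canonical form in type $A$ is a sum of simple root vectors, while conditions (b) and (c) follow from the affine paving of Lemma~\ref{lemma: paving dimensions} combined with the spectral sequence and localization argument of Remark~\ref{remark:leray}. No substantive differences from the paper's proof.
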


\begin{proof} 
  A matrix in Jordan canonical form in Lie type $A$ is by definition a
  sum of simple root vectors such as the $E_{\alpha_i}$ above. 
  Equation~\eqref{eq:regHess-fixedpts} of Lemma~\ref{lemma:facts} Part
  (3) gives the first condition of GKM
  compatibility, namely Equation~\eqref{eq:Y-fixed} in the case of
  $\mathcal{H}(N, H)$.  The complex paving by affines of
  $\mathcal{H}(N, H)$ described in Lemma~\ref{lemma: paving
    dimensions} implies that the ordinary cohomology of
  $\mathcal{H}(N, H)$ is zero in odd degrees.  The result follows from the argument in
  Remark~\ref{remark:leray}. 
\end{proof}

In order to effectively compute a module basis for
$H^*_{S^1}(\mathcal{H}(N_0, H);\F)$, we need more information 
about the components $\sigma_v(w)$ of the equivariant Schubert
classes in the direct sum
\begin{equation}\label{eq:GB-inclusion}
H^*_T(G/B;\F) \into H^*_T((G/B)^T;\F) \cong \bigoplus_{w \in W}
H^*_T(\pt; \F) \cong \bigoplus_{w \in W} \Sym_{\F}(\t^*),
\end{equation}
where $\Sym_{\F}(\t^*)$ denotes the ring of polynomials with
coefficients in the field $\F$ on the Lie algebra $\t$.  
Billey gave a complete description of the polynomial $\sigma_v(w)$
for any $v, w \in W$ in arbitrary Lie type \cite[Theorem 4]{Bil99}. We
will only need the following consequences of her formula. 

\begin{proposition}\label{proposition: Billey for flags} {\bf
    (Corollaries of Billey's formula \cite[Theorem 4]{Bil99})}  
Let $v, w \in W$. Let $\sigma_v \in H^*_T(G/B;\F)$ be the equivariant Schubert class
corresponding to $v$. Then: 
\begin{itemize}
\item Given a reduced word decomposition of $w$, the component $\sigma_v(w)$ is a sum of terms, with one summand for
  each reduced subword of 
  $w$ that equals $v$. In particular, 
\[
\sigma_v(w) = 0
\]
if $w \not > v$ in Bruhat order. 
\item Suppose $w > v$ in Bruhat order. Each summand in $\sigma_v(w)$ is a monomial in the positive roots
  $\Phi^+$
   with a positive integer coefficient.
\end{itemize}
\end{proposition}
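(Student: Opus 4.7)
The plan is to deduce both bullets directly from Billey's formula as stated in \cite{Bil99}. Fix a reduced word decomposition $w = s_{i_1} s_{i_2} \cdots s_{i_\ell}$ of length $\ell = \ell(w)$. Billey's formula expresses
\[
\sigma_v(w) \;=\; \sum_{J} \prod_{j \in J} r_j,
\]
where the sum runs over subsets $J = \{j_1 < j_2 < \cdots < j_k\} \subseteq \{1, 2, \ldots, \ell\}$ such that $s_{i_{j_1}} s_{i_{j_2}} \cdots s_{i_{j_k}}$ is a reduced expression for $v$, and where the root attached to index $j$ is
\[
r_j \;:=\; s_{i_1} s_{i_2} \cdots s_{i_{j-1}} \bigl( \alpha_{i_j} \bigr).
\]
The first sentence of the first bullet is then just a restatement of Billey's formula, with the summands indexed by reduced subwords of $w$ equal to $v$.

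For the vanishing statement, I would invoke the standard subword characterization of Bruhat order (the so-called Subword Property, see e.g. Bj\"orner--Brenti): $v \leq w$ in Bruhat order if and only if some reduced expression for $v$ appears as a (not necessarily consecutive) subword of some, equivalently any, reduced expression for $w$. Consequently, if $w \not\geq v$, then no reduced subword of the fixed reduced word for $w$ is a reduced expression for $v$, the index set of the sum is empty, and $\sigma_v(w) = 0$.

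For the second bullet, the key observation is that each root $r_j$ appearing in Billey's formula lies in $\Phi^+$. This is a classical fact: for a reduced expression $w = s_{i_1} \cdots s_{i_\ell}$, the roots $\{r_j : 1 \leq j \leq \ell\}$ are precisely the positive roots inverted by $w^{-1}$, so in particular $r_j \in \Phi^+$ for every $j$. Therefore every summand $\prod_{j \in J} r_j$ in Billey's formula is a monomial in positive roots with coefficient $1$, and after collecting like monomials across different subwords $J$, the coefficients remain positive integers.

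The argument is essentially a direct unpacking of Billey's formula together with two standard facts from the combinatorics of Coxeter groups (the Subword Property and positivity of the inversion roots of a reduced expression); there is no substantive obstacle to overcome here, which is consistent with the statement being labeled a corollary of \cite[Theorem 4]{Bil99}.
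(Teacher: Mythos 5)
Your proof is correct, and it supplies precisely the argument that the paper takes as read: the paper states this proposition as an immediate corollary of Billey's Theorem 4 with no further justification, and your unpacking via Billey's formula, the subword characterization of Bruhat order, and positivity of the roots $r_j = s_{i_1}\cdots s_{i_{j-1}}(\alpha_{i_j})$ is exactly the intended reasoning. One small point: the paper writes the vanishing condition as ``$w \not> v$,'' which read literally would include $w = v$, yet $\sigma_v(v) \neq 0$; you quietly corrected this to $w \not\geq v$ in your argument, which is the right statement (the paper is evidently using $>$ loosely to mean $\geq$ here, as the second bullet likewise should allow $w = v$).
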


In this context, the
commutative diagram~\eqref{eq:main commutative diagram}
becomes
\begin{equation}\label{eq:comm diagram for Hess}
 \xymatrix{
H^*_T(G/B;\F) \ar @{^{(}->}[r] \ar[d]_{\pi_{G/B}} & H^*_T((G/B)^T;\F) \cong
\bigoplus_{w \in W} \Sym_{\F}(\t^*) \ar[d]^{\pi} \\
H^*_{S^1}(\mathcal{H}(N_0, H);\F) \ar @{^{(}->}[r] &
H^*_{S^1}((\mathcal{H}(N_0, H))^{S^1};\F) \cong \bigoplus_{\stackrel{w \in W \textup{ such that}}{w^{-1} \Delta
  \subseteq \mathcal{M}_H \cup \Phi^+}} \Sym_{\F}(\textup{Lie}(S^1)^*)
}
\end{equation}
where 
the left vertical arrow $\pi_{G/B}$ is
the composition of the natural maps \(H^*_T(G/B;\F) \to
H^*_{S^1}(G/B;\F)\) and \(H^*_{S^1}(G/B;\F) \to
H^*_{S^1}(\mathcal{H}(N_0, H);\F)\) as in
Section~\ref{subsec:subspace-setup}.  For \(v \in W\) let 
\[p_v := \pi_{G/B}(\sigma_v) \in H^*_{S^1}(\mathcal{H}(N_0, H);\F)\] 
denote 
the image of a Schubert class $\sigma_v$ under $\pi_{G/B}$.
Given this
setup, the following proposition---which holds in arbitrary Lie type---is a straightforward consequence of
Proposition~\ref{proposition: Billey for flags}.  

\begin{proposition} \label{proposition: Billey for nilps} 
  Let $H \subseteq \g$ be a
  Hessenberg space and let $\mathcal{H}(N_0,H)$ denote the
  regular nilpotent Hessenberg variety corresponding to $H$ and 
 $N_0$.
    Let \(w \in W\) satisfy \(w^{-1} \Delta
  \subseteq \mathcal{M}_H \cup \Phi^+.\) 
For \(v \in W\),
\begin{itemize}
\item  $p_v(w) = 0$ if $w \not > v$ in Bruhat order, and
\item $p_v(w) \neq 0$ if $w > v$ in Bruhat order.
\end{itemize}
\end{proposition}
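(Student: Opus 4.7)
The plan is to deduce this proposition directly by chasing the commutative diagram~\eqref{eq:comm diagram for Hess} and invoking Billey's formula (Proposition~\ref{proposition: Billey for flags}). Commutativity of the diagram forces the component $p_v(w) \in \Sym_\F(\textup{Lie}(S^1)^*)$ to equal the image of $\sigma_v(w) \in \Sym_\F(\t^*)$ under the natural restriction map $\pi \colon \Sym_\F(\t^*) \to \Sym_\F(\textup{Lie}(S^1)^*)$ induced by the inclusion $S^1 \hookrightarrow T$. Thus the whole problem reduces to analyzing this restriction map on the output of Billey's formula.

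For the vanishing statement, there is nothing to do: if $w \not > v$ in Bruhat order then $\sigma_v(w) = 0$ in $\Sym_\F(\t^*)$ by Proposition~\ref{proposition: Billey for flags}, and additivity of $\pi$ gives $p_v(w) = 0$.

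For the non-vanishing statement, suppose $w > v$. Billey's formula writes $\sigma_v(w)$ as a sum of monomials in the positive roots $\Phi^+$, each with a strictly positive integer coefficient; moreover each monomial comes from a reduced subword of a reduced decomposition of $w$ that equals $v$, so every summand has the same total degree $\ell(v)$. The key step, and essentially the only calculation, is to show that these summands cannot cancel after applying $\pi$. For this I would use the explicit construction of $S^1$ inside $T$ given in Lemma~\ref{lemma:facts}: by Equation~\eqref{eqn: pairing of circle} the pairing $\langle S, \alpha_i \rangle$ equals $1$ for every simple root, which means the restriction of each $\alpha_i \in \Delta$ to $\textup{Lie}(S^1)^*$ is the chosen generator $t$. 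Since any $\alpha \in \Phi^+$ is a non-negative integer combination of simple roots with at least one strictly positive coefficient, $\pi(\alpha) = c_\alpha \, t$ with $c_\alpha$ a positive integer (the height of $\alpha$).

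Consequently every monomial in positive roots of degree $k$ maps under $\pi$ to a positive integer multiple of $t^k$. Applied to the Billey expression for $\sigma_v(w)$, this shows $p_v(w)$ is a sum of positive integer multiples of $t^{\ell(v)}$, all of which have the same sign; since $\F$ has characteristic zero, this sum is nonzero in $\Sym_\F(\textup{Lie}(S^1)^*) \cong \F[t]$. I anticipate no real obstacle in this argument; the only point requiring care is confirming that all Billey summands are homogeneous of the same degree, so that no cancellation across degrees or against a zero coefficient is possible.
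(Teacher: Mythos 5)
Your proof follows the same route as the paper's: chase the commutative diagram, apply Billey's formula, and use Equation~\eqref{eqn: pairing of circle} to see that the restriction $\Sym_\F(\t^*)\to\Sym_\F(\Lie(S^1)^*)$ sends each simple root to $t$ so that Billey's positive summands cannot cancel. You spell out one step the paper leaves implicit, namely that a general positive root (not just a simple one) restricts to a positive integer multiple of $t$ via its height, which is exactly the right justification and does not change the argument's structure.
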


\begin{proof}
Since $S^1$ is a torus of rank one, the symmetric algebra
$\Sym_{\F}(\Lie(S^1)^*)$ may be identified with a polynomial ring in one
variable. Denote this variable $t$. 
Given $S^1 \subseteq T$ constructed in Lemma~\ref{lemma:facts} Part (3), consider the
natural map \(H^*_T(\pt;\F)  \to
H^*_{S^1}(\pt;\F)\).  The induced map \( \Sym_{\F}(\t^*) \to \Sym_{\F}(\Lie(S^1)^*)\) sends each simple root
$\alpha_i \in \t^* \subseteq \Sym_{\F}(\t^*)$ to $t \in  \Sym_{\F}(\Lie(S^1)^*)$ by Equation~\eqref{eqn: pairing of circle}.  Moreover, the arrow labeled $\pi$ in Equation~\eqref{eq:comm diagram for Hess} is defined by
restricting the class $(p(w))_{w \in W} \in H^*_T((G/B)^T;\F)$ to the components indexed by 
Weyl group elements with \(w^{-1} \Delta
  \subseteq \mathcal{M}_H \cup \Phi^+\).  In other words $\pi$ sends $(p(w))_{w \in W}$ to \(  (p(w))_{w \in W: w^{-1} \Delta
  \subseteq \mathcal{M}_H \cup \Phi^+} \). 
  Proposition~\ref{proposition: Billey for flags} now implies that $p_v(w)$ is either zero
  or a polynomial in $t$ with positive integer coefficients, so the claim follows.
\end{proof}

\subsection{The $S^1$-equivariant cohomology of the Peterson variety
  in classical Lie types}\label{subsec:petersons}

In this section, we explicitly build module bases for the
$S^1$-equivariant cohomology of \textbf{Peterson varieties} in
classical Lie type. These are special cases of the regular nilpotent
Hessenberg varieties in
Section~\ref{subsec:background-regnilp}, for which the Hessenberg space
is chosen to be
\begin{equation}\label{eq:def H Delta}
H_\Delta := \mathfrak{b} \oplus_{\alpha \in -\Delta}
\mathfrak{g}_{\alpha}.
\end{equation}
In other words \(\mathcal{M}_H = -\Delta.\) 
For notational simplicity, we fix $\mathfrak{g}$ and denote the
corresponding Peterson variety
\[
\mathcal{Y} := \mathcal{H}(N_0, H_\Delta).
\]

The set of $S^1$-fixed points $\mathcal{Y}^{S^1}$ is a key ingredient
in the combinatorial constructions from Section \ref{sec:pinball},
since it corresponds to the initial subset $\mathcal{J} \subseteq
\mathcal{I}$ with $\mathcal{I} = W \cong (G/B)^T$.
Proposition~\ref{proposition: Peterson fixed points} characterizes the
Weyl group elements $w \in W$ whose flags $wB$ are in an arbitrary
regular nilpotent Hessenberg variety; Proposition~\ref{proposition:
  fixed points enumeration} refines this characterization for Peterson
varieties.  We need the following lemma, which we give for
convenience, though it is probably familiar to experts.  We follow the
notation of Section~\ref{subsec:background-regnilp}. Also, given a
subset $J \subseteq \Delta$ we denote by $W_J$ the corresponding Weyl
group, and by $\Phi_J$ (respectively $\Phi_J^+$ or $\Phi_J^-$)
the corresponding root system (respectively positive or negative
roots).

\begin{lemma} \label{lemma: Peterson fixed point description} Let
  $\Phi$ be a finite root system of arbitrary Lie type with Weyl group
  $W$. Let \(\Delta = \{\alpha_i\}\) denote the simple roots in a
  choice of positive roots $\Phi^+$ of $\Phi$. Let \(w \in W\) and
  define
\begin{equation}\label{eq:def J}
J:= \{\alpha_i: w^{-1}(\alpha_i) < 0\}.
\end{equation}
Then 
\begin{equation}\label{eq:Delta condition}
w^{-1} \Delta \subseteq -\Delta
  \cup \Phi^+
\end{equation}
if and only if $w$ is the maximal element of the Weyl
  group $W_J$. 
\end{lemma}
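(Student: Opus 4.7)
The plan is to leverage the classical characterization: an element $u \in W$ is uniquely determined by its left inversion set $N(u) := \{\gamma \in \Phi^+ : u^{-1}(\gamma) \in \Phi^-\}$, together with the identity $N(w_0^J) = \Phi_J^+$. The two standard structural facts about the longest element of a parabolic subgroup that I will invoke are that $w_0^J$ is an involution sending $J$ bijectively onto $-J$, and that $W_J$ permutes $\Phi^+ \setminus \Phi_J^+$ among themselves. With these, the proof reduces to showing, under the hypotheses of each direction, that $N(w) = \Phi_J^+$.

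For the $(\Leftarrow)$ direction, take $w = w_0^J$, so $w^{-1} = w_0^J$ since $w_0^J$ is an involution. For $\alpha_i \in J$ we get $w^{-1}(\alpha_i) \in -J \subseteq -\Delta$, and for $\alpha_i \notin J$ the permutation property yields $w^{-1}(\alpha_i) \in \Phi^+ \setminus \Phi_J^+ \subseteq \Phi^+$; both cases give the required containment, and the first case confirms that the set $\{\alpha_i : w^{-1}(\alpha_i) < 0\}$ defined in the lemma from this $w$ is exactly the original $J$.

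For the $(\Rightarrow)$ direction, assume $w^{-1}\Delta \subseteq -\Delta \cup \Phi^+$ and set $J := \{\alpha_i : w^{-1}(\alpha_i) < 0\}$. The hypothesis forces $w^{-1}(\alpha_i) = -\alpha_{\sigma(i)} \in -\Delta$ for an injection $\sigma: J \to \Delta$. The inclusion $\Phi_J^+ \subseteq N(w)$ is then immediate by expanding $\gamma \in \Phi_J^+$ in simple roots of $J$ and applying $w^{-1}$. For the reverse inclusion, I will suppose for contradiction that $\gamma \in N(w) \setminus \Phi_J^+$. Writing $\gamma = \sum c_i \alpha_i$ with some $c_{i_0} > 0$ and $\alpha_{i_0} \notin J$, split
\[
w^{-1}(\gamma) = -\sum_{\alpha_i \in J} c_i \alpha_{\sigma(i)} + \sum_{\alpha_i \notin J} c_i w^{-1}(\alpha_i),
\]
where the first sum is supported on $\sigma(J)$ and each $w^{-1}(\alpha_i)$ in the second sum is a positive root with non-negative simple-root coefficients. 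Necessarily $J \subsetneq \Delta$ (else $\Phi_J^+ = \Phi^+$), so $\sigma(J) \subsetneq \Delta$; for any $\alpha_k \in \Delta \setminus \sigma(J)$, the $\alpha_k$-coefficient of $w^{-1}(\gamma)$ receives contribution only from the second sum and is therefore non-negative, but must be $\leq 0$ since $w^{-1}(\gamma) \in \Phi^-$. Hence every such coefficient vanishes, forcing $w^{-1}(\alpha_{i_0})$ to be supported on $\sigma(J)$. Writing $w^{-1}(\alpha_{i_0}) = \sum_{\alpha_k \in \sigma(J)} e_k \alpha_k$ with $e_k \geq 0$ and applying $w$---using $w(\alpha_k) = -\alpha_{\sigma^{-1}(k)} \in -J$ for $\alpha_k \in \sigma(J)$, which follows from $w(-\alpha_{\sigma(i)}) = \alpha_i$---exhibits $\alpha_{i_0}$ as a nonpositive integer combination of simple roots in $J$, whose $\alpha_{i_0}$-coefficient is zero, contradicting the left side having $\alpha_{i_0}$-coefficient $1$. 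This yields $N(w) = \Phi_J^+ = N(w_0^J)$, hence $w = w_0^J$.

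The main obstacle is the reverse inclusion $N(w) \subseteq \Phi_J^+$ in the $(\Rightarrow)$ direction; the key trick is to use the non-negativity of the simple-root coefficients of $w^{-1}(\alpha_i)$ for $\alpha_i \notin J$ to force $w^{-1}(\alpha_{i_0})$ into the span of $\sigma(J)$, and then push back through $w$ into the span of $-J$, where the $\alpha_{i_0}$-component must vanish.
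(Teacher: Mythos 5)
Your proof is correct. Both you and the paper handle the $(\Leftarrow)$ direction the same way (direct verification via the standard properties of $w_J$); the difference is in the forward direction. The paper's argument also first establishes that $w^{-1}$ sends $\Phi^+\setminus\Phi_J^+$ into $\Phi^+$ (which is equivalent to your $N(w)\subseteq\Phi_J^+$) via a similar coefficient-comparison contradiction, but then concludes $w=w_J$ by a two-step length argument: counting $\ell(w)=|\Phi_J^+|=\ell(w_J)$ and then showing $w^{-1}w_J$ sends no simple root to a negative root, hence is the identity. You instead invoke the uniqueness of Weyl group elements by their left inversion sets together with the classical identity $N(w_0^J)=\Phi_J^+$, which lets you finish the moment you've pinned down $N(w)$ and avoids the extra step of analyzing $w^{-1}w_J$ on each simple root. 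Your route is a bit cleaner for the final deduction; the paper's is more self-contained in that it only cites the standard facts about $w_J$ from Bourbaki and the ``no simple root to negative'' criterion for the identity rather than the inversion-set uniqueness theorem. One small point worth making explicit in a final write-up: in the reverse inclusion you obtain $w^{-1}(\alpha_{i_0})$ supported on $\sigma(J)$ with \emph{nonnegative} coefficients and it is a nonzero positive root, so after applying $w$ you get a nonzero nonpositive combination of simple roots in $J$, which cannot equal the simple root $\alpha_{i_0}\notin J$ by linear independence of $\Delta$ — you have this, but spelling out the nonvanishing is what rules out the degenerate case.
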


\begin{proof}
   Bourbaki proves that if $w_J$ is the (unique) maximal element of
  $W_J$ then $\ell(w_J) = \Phi^+_J$ and $w_J^{-1}$ sends $\Delta_J$ to
  $-\Delta_J$ \cite[Corollary 3 in VI.1.1.6]{Bou81}. 
  Every element of $\Phi^+_J$ is a linear combination of the simple
  roots $\Delta_J$ with nonnegative coefficients, so
  $w_J^{-1}(\Phi^+_J) = \Phi^-_J$ by linearity.  The length of $w_J$
  is the number of positive roots that $w_J^{-1}$ sends to negative
  roots \cite[Corollary 2 in VI.1.1.6]{Bou81} so $w_J^{-1}(\Phi^+ -
  \Phi^+_J) \subseteq \Phi^+$.  We conclude $w_J^{-1}\Delta \subseteq
  -\Delta \cup \Phi^+$ as desired.

Conversely, suppose $w\in W$ and $w^{-1} \Delta \subseteq -\Delta \cup \Phi^+$.   We first show that
  $w^{-1}(\Phi^+-\Phi_J^+) \subseteq \Phi^+$. 
  Choose $\alpha = \sum_{\alpha_i \in \Delta} c_i
  \alpha_i$ to be an arbitrary positive root with $w^{-1}(\alpha)<0$.
(In particular, each coefficient $c_i$ is non-negative.)  
Since $w^{-1}$ is linear on $\Phi$ we obtain 
\begin{equation}\label{eq:w inverse alpha negative}
w^{-1}(\alpha) =
  \sum_{\alpha_i \in \Delta} c_i
  w^{-1}(\alpha_i) = \sum_{\alpha_k \not \in J} c_k w^{-1}(\alpha_k) + \sum_{\alpha_j \in J}
  c_j w^{-1}(\alpha_j) < 0.
\end{equation}
Now let $\alpha_i \in \Delta$ with $\alpha_i \not \in J$, so $w^{-1}(\alpha_i)
  > 0$.  If the coefficient $c_i$ of $\alpha_i$ is strictly
  positive then
 \begin{equation}\label{eq:inequality}
0 < c_i w^{-1}(\alpha_i) \leq \sum_{\alpha_k \not \in J} c_k w^{-1}(\alpha_k)
  < \sum_{\alpha_j \in J} -c_j w^{-1}(\alpha_j),
\end{equation} 
where the last inequality follows from Equation~\eqref{eq:w inverse alpha
  negative}. Each summand in Equation~\eqref{eq:inequality} is non-negative by definition of $J$
and the fact that the coefficients $c_j, c_k$ are non-negative. 
For each $j \in J$ the root
  $-w^{-1}(\alpha_j)$ is in $\Delta$ by the hypothesis that $w^{-1}\Delta
  \subseteq -\Delta \cup \Phi^+$.  Equation~\eqref{eq:inequality} now implies that $w^{-1}(\alpha_i)$
  is a linear combination of $\{w^{-1}(\alpha_j)\}_{\alpha_j \in J}$.  This contradicts the fact that $w^{-1}(\Delta)$  is a
  linearly independent set of roots.  We conclude that $c_i = 0$ for
  all $i \not \in J$, from which it follows that $w^{-1}(\Phi^+ -
  \Phi_J^+) \subseteq \Phi^+$. 

  To complete the proof, we will show that $w^{-1}w_J = e$. We saw that $w^{-1}(\Phi^+ -
  \Phi_J^+) \subseteq \Phi^+$ and as before we know that
  $w^{-1}(\Phi^+_J) \subseteq \Phi^-$ by linearity. Hence
  $\ell(w) = |\Phi^+_J| = \ell(w_J)$.  
  Recall that $w_J = w_J^{-1}$ \cite[Corollary 3 in
  VI.1.1.6]{Bou81}.  We already saw that $w_J(\Phi^+_J)= -\Phi^+_J$
  and so  
  $w_J(\Phi^+-\Phi^+_J) = \Phi^+-\Phi^+_J$.
  We now show that $w^{-1} w_J (\alpha_i) > 0$ for all
  $i=1,2,\ldots,n$.  If $i \in J$ then since
  \[w^{-1}w_J(\Phi^+_J) = w^{-1}(-\Phi^+_J) \subseteq \Phi^+\] we know
  $w^{-1}w_J(\alpha_i) > 0$.  If $i \not \in J$ then
  $w^{-1}w_J(\alpha_i) = w^{-1}(\alpha)$ for some $\alpha \in \Phi^+ -
  \Phi^+_J$.  The element $w^{-1}$ has length $|\Phi^+_J|$ and
  sends exactly the positive roots $\Phi^+_J$ to negative roots, so
  $w^{-1}(\alpha) > 0$.  Thus $w^{-1}w_J$ sends no simple root to a
  negative root.  It follows that $w^{-1}w_J = e$ \cite[10.2,
  Corollary to Lemma C]{Hum75}, as desired.
\end{proof}

Combining the previous lemma and Proposition~\ref{proposition: Peterson
  fixed points} immediately gives the following. 

\begin{proposition}\label{proposition: fixed points enumeration}
Let $\g$ be of arbitrary Lie type. Fix the
Hessenberg space $H_\Delta := \mathfrak{b} \oplus_{\alpha \in -\Delta}
\mathfrak{g}_{\alpha}$
 and let $\mathcal{Y}$
be the Peterson variety corresponding to $H_\Delta$ 
and $N_0$. Let $\Phi^+ \subseteq
\Phi$ be the positive roots corresponding to $\mathfrak{b}$ in the root system $\Phi$ of $\g$
and let $\Delta \subseteq \Phi^+$ denote the simple roots. 
Then the $S^1$-fixed points $\mathcal{Y}^{S^1}$ of $\mathcal{Y}$ are
in one-to-one correspondence with subsets $J$ of $\Delta$, with
explicit bijection given by 
\begin{equation}\label{eq:bijection}
J \subseteq \Delta \longleftrightarrow \textup{  the maximal element $w_J$ in
 } W_J \longleftrightarrow w_JB \in \mathcal{Y}^{S^1}. 
\end{equation}
\end{proposition}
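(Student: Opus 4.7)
The proposition is essentially an assembly of three previously established facts, so my plan is to chain them together and then verify that the correspondence $J \mapsto w_J$ is indeed a bijection.

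First, I would identify the $S^1$-fixed locus of $\mathcal{Y}$ with a subset of $W$. By Lemma~\ref{lemma:facts}(3) (applied with $N = N_0$, which is a sum of simple root vectors), we have $\mathcal{Y}^{S^1} = \mathcal{Y} \cap (G/B)^T$. Since $(G/B)^T$ is identified with $W$ via $w \leftrightarrow wB$, this turns the problem into characterizing those $w \in W$ for which $wB \in \mathcal{Y}$.

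Next, I would apply Proposition~\ref{proposition: Peterson fixed points} to the Hessenberg space $H_\Delta$. By Equation~\eqref{eq:def H Delta} and~\eqref{eq:def-MH}, we have $\mathcal{M}_{H_\Delta} = -\Delta$, so Proposition~\ref{proposition: Peterson fixed points} asserts that $wB \in \mathcal{Y}$ precisely when $w^{-1}\Delta \subseteq -\Delta \cup \Phi^+$. Lemma~\ref{lemma: Peterson fixed point description} then says that this condition on $w$ holds if and only if $w = w_J$ is the maximal element of the parabolic subgroup $W_J$, where $J := \{\alpha_i \in \Delta : w^{-1}(\alpha_i) < 0\}$. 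Combining these steps yields a well-defined surjection $\{w_J : J \subseteq \Delta\} \twoheadrightarrow \mathcal{Y}^{S^1}$.

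Finally, I would verify that the assignment $J \mapsto w_J$ is a bijection from $2^{\Delta}$ onto this set. Injectivity is the only point that requires a remark: given $w_J$, the subset $J$ can be recovered as $\{\alpha_i \in \Delta : w_J^{-1}(\alpha_i) < 0\}$. This follows from the Bourbaki fact (already recalled in the proof of Lemma~\ref{lemma: Peterson fixed point description}) that $w_J^{-1}$ sends $\Delta_J$ to $-\Delta_J$ and sends $\Delta \setminus J$ into $\Phi^+$. Thus distinct subsets $J, J' \subseteq \Delta$ produce distinct maximal elements $w_J \neq w_{J'}$, and the chain of identifications in~\eqref{eq:bijection} is a bijection as claimed. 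There is no genuine obstacle here; the entire argument is a direct concatenation of the preceding results, with the only minor subtlety being the recovery of $J$ from $w_J$ to ensure the correspondence is one-to-one rather than merely surjective.
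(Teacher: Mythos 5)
Your proposal is correct and takes essentially the same approach as the paper, which dispatches the proposition in a single sentence by combining Lemma~\ref{lemma: Peterson fixed point description} and Proposition~\ref{proposition: Peterson fixed points}. You simply spell out that combination (including the implicit use of Lemma~\ref{lemma:facts}(3) and the recovery of $J$ from $w_J$ via the Bourbaki facts), which is a faithful expansion rather than a different argument.
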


Now assume $\g$ has classical Lie type. 
In the next result, we combine the concrete characterization of $\mathcal{Y}^{S^1}$ with
the Betti numbers from Lemma~\ref{lemma: paving dimensions}
to construct a rolldown $roll(w)\in W$ for each element $w$ in the initial subset $\mathcal{J} =
\mathcal{Y}^{S^1}$.   We show that the rolldowns can arise from a
successful game of upper-triangular Betti pinball, so the set
$\{p_{roll(w)}\}_{w \in \mathcal{Y}^{S^1}}$ forms an
$H^*_{S^1}(\pt;\F)$-module basis for $H^*_{S^1}(\mathcal{Y};\F)$. 

We denote by $s_i \in W$ the simple reflection corresponding to the simple root $\alpha_i$ in $\Delta$. 

\begin{theorem} \label{theorem: classical basis for Peterson} 
Let $\g$
  be of classical Lie type.   
Fix the
Hessenberg space $H_\Delta := \mathfrak{b} \oplus_{\alpha \in -\Delta}
\mathfrak{g}_{\alpha}$
 and let $\mathcal{Y}$
be the Peterson variety corresponding to $H_\Delta$ 
and $N_0$.
For each subset $J \subseteq \Delta$ let
  $w_J$ be maximal Weyl group element of $W_J$.
  Suppose $J = \{\alpha_{i_1}, \alpha_{i_2}, \ldots, \alpha_{i_s}: i_1
  < i_2 < \ldots < i_s\}$. Let 
\begin{equation}\label{eq:def vJ}
v_J := s_{i_1} s_{i_2} \ldots s_{i_s} \in W. 
\end{equation}
Then the association $w_J \mapsto v_J$ for $J \subseteq \Delta$ is a possible outcome of
  a successful game of upper-triangular Betti pinball, where $v_J =
  roll(w_J)$. 
In particular, the equivariant cohomology classes 
$\{p_{v_J} 
\} \subseteq H^*_{S^1}(\mathcal{Y})$
form a $H^*_{S^1}(\pt;\F)$-module basis for 
  $H^*_{S^1}(\mathcal{Y}; \F)$.
\end{theorem}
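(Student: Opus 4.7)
The plan is to invoke Proposition~\ref{prop:pinball-betti}(1): I will exhibit a total order on the initial subset $\mathcal{J} = \mathcal{Y}^{S^1} = \{w_J\}_{J \subseteq \Delta}$, compatible with the partial order induced from Bruhat order on $W$, and show that the prescribed assignment $w_J \mapsto v_J$ arises as the outcome of a successful game of upper-triangular Betti pinball with target Betti numbers $b_{2j} = \dim_{\F} H^{2j}(\mathcal{Y};\F)$. The ambient hypotheses of Proposition~\ref{prop:pinball-betti} are in place: the pair $(\mathcal{Y}, S^1)$ is GKM-compatible with $(G/B, T)$ by Theorem~\ref{theorem:Hessies are GKM-compatible}; the equivariant Schubert classes $\{\sigma_v\}_{v \in W}$ provide a rank-homogeneous poset-upper-triangular module basis of $H^*_T(G/B;\F)$ by Remark~\ref{remark:GKM conditions}(2); and $\mathcal{Y}$ admits a paving by complex affines (Lemma~\ref{lemma: paving dimensions}), so its odd cohomology vanishes.

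First I will compute the target Betti numbers. With $\mathcal{M}_H = -\Delta$, formula~\eqref{eq:betti of Hessenbergs} counts those $w \in W$ with $w^{-1}\Delta \subseteq -\Delta \cup \Phi^+$ for which $j = |\{\alpha \in \Phi^+ : w^{-1}(\alpha) \in -\Delta\}|$. By Proposition~\ref{proposition: fixed points enumeration} the first condition forces $w = w_J$ for some $J \subseteq \Delta$. A short computation using the facts that $w_J$ sends $\Phi_J^+$ bijectively onto $-\Phi_J^+$ and restricts to a bijection of $\Phi^+ \setminus \Phi_J^+$ with itself shows
\[
\bigl|\{\alpha \in \Phi^+ : w_J^{-1}(\alpha) \in -\Delta\}\bigr| \;=\; \bigl|\{\alpha_i \in \Delta : w_J(\alpha_i) < 0\}\bigr| \;=\; |J|,
\]
whence $b_{2j} = \binom{n}{j}$ for $n = |\Delta|$. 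On the pinball side, the assignment $J \mapsto v_J$ is injective, and $v_J = s_{i_1}\cdots s_{i_s}$ is a reduced expression (being a Coxeter element of the parabolic $W_J$) of Bruhat length $|J|$. Thus the rolldowns contribute exactly $\binom{n}{j}$ elements of rank $j$, matching $b_{2j}$.

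Next I will choose a total order $\prec$ on $\mathcal{J}$ by taking any linear extension of the partial order $w_{J'} \leq w_J \iff J' \subseteq J$ that sorts first by $|J|$ in increasing order, and I will verify that $v_J$ is an admissible rolldown of $w_J$ under the upper-triangular rules. Since $v_J \in W_J$ and $w_J$ is the longest element of $W_J$, we have $v_J \leq w_J$ in Bruhat order, so a saturated Hasse chain descends from $w_J$ to $v_J$. Any intermediate element $u$ on such a chain satisfies $v_J \leq u$, whence $\supp(u) \supseteq \supp(v_J) = J$. For any previously dropped pinball $w_{J'}$ (necessarily with $|J'| \leq |J|$ and $J' \neq J$), the principal order ideal $\mathcal{L}_{\mathcal{I}}(w_{J'})$ coincides with the parabolic subgroup $W_{J'}$, because $w_{J'}$ is the longest element of $W_{J'}$ and Bruhat descent preserves support. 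Membership $u \in W_{J'}$ would force $J \subseteq \supp(u) \subseteq J'$, which is impossible. Hence no wall of the form $i \mapsto i'$ with $i' \in \mathcal{L}_{\mathcal{I}}(w_{J'})$ blocks the chain, and since the map $J \mapsto v_J$ is injective no landing wall at a previously occupied slot blocks it either. The upper-triangular Betti pinball game is therefore legal and successful.

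The main obstacle is the third paragraph: verifying that the walls accumulated from earlier pinball steps do not interfere with the prescribed descent $w_J \searrow v_J$. The argument hinges on two classical Weyl-group facts — monotonicity of support under Bruhat order and the identification $\mathcal{L}_{\mathcal{I}}(w_{J'}) = W_{J'}$ — together with the careful choice of a total order that linearizes by $|J|$. Once these are in hand, Proposition~\ref{prop:pinball-betti}(1) immediately yields the desired $H^*_{S^1}(\pt;\F)$-module basis $\{p_{v_J}\}_{J \subseteq \Delta}$ of $H^*_{S^1}(\mathcal{Y};\F)$.
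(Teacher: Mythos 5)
Your proof is correct and follows essentially the same strategy as the paper's: compute the target Betti numbers from Equation~\eqref{eq:betti of Hessenbergs}, fix a compatible total order, verify that $v_J$ is a legal rolldown for $w_J$, and invoke Proposition~\ref{prop:pinball-betti}. The one noticeable divergence is in the wall-avoidance step: the paper observes that since every element $u$ on a saturated chain from $w_J$ to $v_J$ satisfies $v_J \leq u$, it is enough by transitivity of Bruhat order to check that the terminal slot $v_J$ itself satisfies $v_J \not\leq w_{J'}$, whereas you instead track $\supp(u)$ for each intermediate $u$ and use $\mathcal{L}_{\mathcal{I}}(w_{J'}) = W_{J'}$ together with monotonicity of support; both arguments are valid and rest on the same classical facts about parabolic subgroups and longest elements. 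Your choice to refine the total order by sorting by $|J|$ first is harmless but unnecessary, since the paper's transitivity reduction (and your own $J \subseteq J'$ contradiction) works for any linear extension of $J' \subset J$.
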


\begin{proof}
  We prove the claim by playing upper-triangular Betti pinball. The
  board $\mathcal{I}$ is the Weyl group $W$, identified with the set
  of $T$-fixed points on $G/B$ and equipped with Bruhat order. We use
  the rank function on $\mathcal{I} = W$ given
  by \(\rho(w) = \ell(w)\), the Bruhat length.  The initial subset is
  $\mathcal{J} = \{w_J: J \subseteq \Delta\} \cong \mathcal{Y}^{S^1}$.
  The inclusion \(J' \subseteq J\) implies that \(w_{J'} \in W_J\).
  This in turn implies $w_{J'} \leq w_J$ since $w_J$ is maximal in
  $W_J$.  In other words, Bruhat order induces the partial order on
  the initial subset \(\mathcal{J} \) given by
\begin{equation}\label{eq:partial order} 
w_{J'} < w_J \Leftrightarrow J' \subset J.
\end{equation}
Fix any total order $\prec$ of $\mathcal{J}$ subordinate to this partial order.

Specializing Formula~\eqref{eq:betti of Hessenbergs} in Lemma~\ref{lemma: paving dimensions}
to Peterson varieties, we see that the nonzero Betti numbers of $\mathcal{Y}$
are \(b_{2j} = \binom{|\Delta|}{j}\), namely the number of subsets $J \subseteq \Delta$ with $|J| = j$. 
These $b_{2j}$ are our target Betti numbers.

We now play upper-triangular Betti pinball. We will show that for each
$w_J$ our choice $v_J := roll(w_J)$ satisfies all the rules for basic
pinball, upper-triangular pinball, and Betti pinball.  Indeed, for all
$J$ we have \(v_J <
w_J\) in Bruhat order by construction, so $v_J$ is a possible
 basic pinball rolldown for $w_J$.
Second we prove that at each step of upper-triangular pinball, no wall
on the board prevents $w_J$ from rolling down to $v_J$. It suffices to
show that if $w_{J'} \prec w_J$ then the element $v_J \not < w_{J'}$
in Bruhat order. The reflection $s_{\alpha_i} < v_J$ 
  precisely when \(\alpha_i \in J\) by construction of the element $v_J$.
Hence \(v_J <
  w_{J'}\) if and only if \(J \subseteq J'\), from which it follows
  that if \(w_{J'} \prec w_J\) then \(v_J \not \leq w_{J'}.\) 
Third, we saw above that $\rho(v_J) = \ell(v_J) = |J|$ and there
are precisely $\binom{|\Delta|}{j}$ subsets $J$ with degree $|J|$, so 
the $v_J$ are also rolldowns in Betti pinball. 

Finally, since the equivariant Schubert classes $\{\sigma_w\}_{w \in W}$
are a rank-homogeneous poset-upper-triangular basis with respect to Bruhat
order, we conclude from Proposition~\ref{prop:pinball-betti} 
that the classes $\{p_{v_J}: J \subseteq
\Delta\}$ form a $H^*_{S^1}(\pt;\F)$-module basis for
$H^*_{S^1}(\mathcal{Y};\F)$, as desired. 
\end{proof}

\begin{remark}
A subset of the images $\{p_w\}_{w \in W}$ of the equivariant
Schubert classes generate the ring $H^*_{S^1}(\mathcal{Y};\F)$, so the ring map 
\[
\pi_{G/B}: H^*_T(G/B;\F) \to H^*_{S^1}(\mathcal{Y};\F)
\]
is surjective when $\mathcal{Y}$ is the Peterson variety corresponding
to $G$ of classical Lie type. (This is the map from
Diagram~\eqref{eq:comm diagram for Hess}, which is
Diagram~\eqref{eq:EGX-to-EHY} for the special case of regular nilpotent
Hessenberg varieties.) We note that Carrell and Kaveh have shown that
surjectivity of $\pi_{G/B}$ is equivalent to the statement that
$H^*_{S^1}(\mathcal{Y};\F)$ is generated by the Chern classes of
$B$-equivariant vector bundles \cite{CarKav08}.
\end{remark}

We can also construct the module basis  
  $\{p_{v_J}\}_{J \subseteq \Delta}$ in the above theorem 
from a matching compatible with degrees, as
discussed in Section~\ref{subsec:borel}.  The additional
ingredient which enables this construction is the
geometric data of the dimensions of the affine cells that pave
$\mathcal{Y}$, as recorded in Lemma~\ref{lemma: paving dimensions}.

\begin{theorem}\label{theorem:peterson-matching}
  Let \(\g, \mathcal{Y}, \{w_J: J \subseteq \Delta\}, \{v_J: J
  \subseteq \Delta\}\) be as in Theorem~\ref{theorem: classical basis
    for Peterson}. Then the $H^*_{S^1}(\pt;\F)$-module basis
  $\{p_{v_J}: J \subseteq \Delta\}$ 
of $H^*_{S^1}(\mathcal{Y};\F)$ can be obtained via a matching
compatible with degrees in the sense of
Theorem~\ref{theorem:matching}. 
\end{theorem}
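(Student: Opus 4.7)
The plan is to verify the hypotheses of Theorem~\ref{theorem:matching} with $X = G/B$, $Y = \mathcal{Y}$, $T' = S^1$, ambient basis $\{x_i\}_{i \in \mathcal{I}} = \{\sigma_w\}_{w \in W}$ given by the equivariant Schubert classes, index set $\mathcal{I} = W$ partially ordered by Bruhat order with rank $\rho_X = \ell$, and initial subset $\mathcal{J} = \mathcal{Y}^{S^1} = \{w_J : J \subseteq \Delta\}$ as identified in Proposition~\ref{proposition: fixed points enumeration}. GKM-compatibility of $(\mathcal{Y}, S^1)$ with $(G/B, T)$ is Theorem~\ref{theorem:Hessies are GKM-compatible}; the vanishing of $H^{\mathrm{odd}}(\mathcal{Y};\F)$ follows from the affine paving in Lemma~\ref{lemma: paving dimensions}; and the equivariant Schubert classes form a rank-homogeneous poset-upper-triangular basis with $\min(\sigma_w) = w$ by Remark~\ref{remark:GKM conditions}(2). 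It then remains to construct $\deg_Y$ and an injection $f$, and to verify upper-triangularity of $\{\overline{\sigma}_{f(j)}\}$.

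For the degree function, I set $\deg_Y(w_J)$ equal to the complex dimension of the affine cell $\mathcal{C}_{w_J} \cap \mathcal{Y}$ in the paving of Lemma~\ref{lemma: paving dimensions}. That lemma identifies this dimension with $|\{\alpha \in \Phi^+ : w_J^{-1}(\alpha) \in -\Delta\}|$, since $\mathcal{M}_{H_\Delta} = -\Delta$. I would then compute this cardinality using two standard facts about the longest element $w_J$ of $W_J$: it sends $\Phi_J^-$ bijectively to $\Phi_J^+$, and every element of $W_J$ permutes $\Phi^+ \setminus \Phi_J^+$ (because each reflection $s_k$ with $\alpha_k \in J$ preserves the coefficient of every $\alpha_i \notin J$ in the simple-root expansion of a root). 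A case split on whether $\alpha_i \in J$ then shows $w_J(-\alpha_i) \in \Phi^+$ exactly when $\alpha_i \in J$, so $\deg_Y(w_J) = |J|$. Since $b_{2k}(\mathcal{Y}) = \binom{|\Delta|}{k}$ by Equation~\eqref{eq:betti of Hessenbergs} and there are $\binom{|\Delta|}{k}$ subsets $J$ of size $k$, the cardinality condition of Theorem~\ref{theorem:matching} holds.

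Next I define the matching by $f(w_J) := v_J$. Because $v_J$ is a Coxeter element of $W_J$, the product $s_{i_1}\cdots s_{i_s}$ is reduced and $\ell(v_J) = |J|$, so distinct subsets $J$ produce distinct $v_J$; thus $f$ is injective and $\rho_X(f(w_J)) = |J| = \deg_Y(w_J)$. For upper-triangularity of $\{p_{v_J}\} = \{\overline{\sigma}_{f(w_J)}\}$, essentially every needed computation already appears in the proof of Theorem~\ref{theorem: classical basis for Peterson}. By Proposition~\ref{proposition: Billey for nilps}, $p_{v_J}(w_{J'}) = 0$ unless $w_{J'} \geq v_J$ in Bruhat order, and $p_{v_J}(w_J) \neq 0$ because $v_J \leq w_J$. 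The subword characterization of Bruhat order (using that $w_{J'}$ is the longest element of $W_{J'}$) gives $v_J \leq w_{J'} \iff v_J \in W_{J'} \iff J \subseteq J'$, which by Equation~\eqref{eq:partial order} is equivalent to $w_J \leq w_{J'}$ in the induced partial order on $\mathcal{J}$. Hence $p_{v_J}(w_{J'}) = 0$ for every $w_{J'} \in \mathcal{J}$ with $w_{J'} \not\geq w_J$, so each $p_{v_J}$ is a poset-flow-up with $\min = w_J$; the distinct minima ensure that the collection $\{p_{v_J}\}$ is poset-upper-triangular with respect to the partial order on $\mathcal{J}$, and in particular with respect to any compatible total order $\prec$. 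Theorem~\ref{theorem:matching} then delivers the conclusion.

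The main obstacle is the combinatorial identity $\deg_Y(w_J) = |J|$; once that is in place, every other hypothesis either comes for free or was already verified during the proof of Theorem~\ref{theorem: classical basis for Peterson}. In this sense the matching proof is a repackaging of the pinball basis, with the degree function extracted from the geometry of the affine paving rather than from the pinball dynamics.
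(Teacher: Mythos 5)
Your proof is correct and takes essentially the same route as the paper: define $\deg_{\mathcal Y}(w_J)$ via the affine-cell dimensions from Lemma~\ref{lemma: paving dimensions}, establish $\deg_{\mathcal Y}(w_J)=|J|$ (you spell out the root-theoretic computation that the paper delegates to Lemmas~\ref{lemma: paving dimensions} and~\ref{lemma: Peterson fixed point description}), take the matching $f(w_J)=v_J$, and reuse the upper-triangularity argument from the proof of Theorem~\ref{theorem: classical basis for Peterson} before invoking Theorem~\ref{theorem:matching}. The only difference is that you re-derive the Bruhat-order equivalence $v_J\leq w_{J'}\Leftrightarrow J\subseteq J'$ in place of the paper's direct citation of the earlier proof.
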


\begin{proof}
  Define a degree function $\deg_{\mathcal{Y}}: \mathcal{J} 
  \to \Z_{\geq 0}$ by
\[
\deg_{\mathcal{Y}}(w_J) = \left( \textup{ complex dimension of the affine
  cell } \overline{\mathcal{C}_{w_J}} \cap \mathcal{Y} \textup{ associated
  to } w_J \textup{ in Lemma~\ref{lemma: paving dimensions} }
\right). 
\]
 Lemmas~\ref{lemma: paving dimensions} and~\ref{lemma: Peterson fixed point description} together show that \(\deg_{\mathcal{Y}}(w_J) =
|J|.\) Take the rank function \(\rho: \mathcal{I} 
\to \Z_{\geq 0}\) to be the usual Bruhat length, namely \(\rho(w) =
\ell(w).\) (Bruhat length of $w$ equals
half the cohomology degree of the Schubert class $\sigma_w$.)  
This means that \(\deg_{\mathcal{Y}}(w_J) =
\rho(v_J).\)  In particular the map \(f: \mathcal{J} 
\to \mathcal{I} 
\) given by \(f(w_J) = v_J\) satisfies
\(\deg_{\mathcal{Y}}(w_J) = \rho(v_J) = |J|.\) The proof of 
Theorem~\ref{theorem: classical basis for Peterson} showed that the
$\{p_{v_J}\}$ are a rank-homogeneous poset-upper-triangular basis with respect to any total ordering compatible with $<$.  
The result
follows from Theorem~\ref{theorem:matching}. 
\end{proof}

In previous work \cite{HarTym09}, the authors constructed an
$H^*_{S^1}(\pt;\F)$-module basis for $H^*_{S^1}(\mathcal{Y};\F)$
without reference to poset pinball, in the case when $\g$ has Lie type
$A$. In fact, the formula for $v_J$ given in Equation~\eqref{eq:def
  vJ} generalizes to arbitrary Lie type the explicit formulas for what
was called $v_{\mathcal A}$ in earlier work \cite[Equation (2.7) and
Definition 4.1]{HarTym09}. We deduce that the basis discussed in
\cite{HarTym09} in fact arises from poset pinball.

Moreover, our previous paper \cite{HarTym09} used the poset pinball basis $\{p_{v_J}\}$
in Lie type $A$ to
explicitly analyze the structure constants of
  $H^*_{S^1}(\mathcal{Y})$ via a kind of Monk's formula in equivariant
  cohomology.  We conclude this section with a question
  for future work. 

\begin{question}
  What is an explicit combinatorial formula
  for the structure
  constants of $H^*_{S^1}(\mathcal{Y})$ with respect to the basis
  $\{p_{v_J}\}$ in each classical Lie type?
\end{question}

\section{Example: Springer varieties in type
    $A$}\label{sec:springer}

In this section we analyze a special class of nilpotent Hessenberg
varieties in Lie type $A$: {\em Springer varieties}, and in
particular the {\em subregular} Springer varieties. 
The flag variety $GL_n(\mathbb{C})/B$ can be
identified with  
\[
\Flags(\C^n) = \{ V_{\bullet}: 0 \subseteq V_1 \subseteq V_2 \subseteq \cdots
\subseteq V_{n-1} \subseteq \mathbb{C}^n \textup{ such that } \dim_{\C}(V_i) = i\}.
\]  
Suppose $N: \mathbb{C}^n \rightarrow
\mathbb{C}^n$ is a nilpotent linear operator and $\b$ is the standard Borel subalgebra of upper-triangular matrices in $\g$. The \textbf{Springer variety $\mathcal{S}_N$ associated to $N$} is the
Hessenberg variety associated to $N$ and the Hessenberg space $H =
\b$, namely
\begin{equation}\label{eq:def Springer} 
\mathcal{S}_N := \mathcal{H}(N, \b).
\end{equation}
In Lie type $A$, this can be expressed as 
\[
\mathcal{S}_N := \{ V_\bullet: NV_i \subseteq V_i \textup{ for all }
1 \leq i \leq n\} \subseteq \Flags(\C^n).
\]

Springer discovered that the symmetric group $S_n$ acts on the ordinary
cohomology $H^*(\mathcal{S}_N; \C)$ for any Springer variety
\cite{Spr76}. 
This representation is graded by the
degree of the cohomology classes.  Springer also showed that the
top-dimensional cohomology group is
an irreducible representation, and that any 
irreducible representation of $S_n$ arises in this way. Indeed, 
the irreducible representation corresponding
to a partition $\lambda = (\lambda_1 \geq \lambda_2 \geq \cdots \geq \lambda_s)$ arises from the top-dimensional cohomology
of the Springer variety $\mathcal{S}_N$ for $N$ with Jordan canonical
form given by Jordan blocks of size $\lambda_1, \lambda_2, \ldots,
\lambda_s$.

In this section, we use poset pinball to construct an explicit module basis for the
$S^1$-equivariant cohomology with complex coefficients of subregular
Springer varieties of Lie type $A$. Moreover, we
construct an $S_n$-representation  on this explicit module basis
and obtain an equivariant Springer
representation.  Goresky and
MacPherson give a related construction for a different
torus action \cite[Section 7]{GorMac10}.

\subsection{An $S^1$-action and the $S^1$-fixed points of Springer
  varieties} 

In this section, we describe an $S^1$-action on arbitrary Springer
varieties of Lie type $A$ and make some initial observations on their
fundamental properties. For instance, we will see that the fixed
points $\mathcal{S}_N^{S^1}$ may be identified with the set of
permutations whose descents are in positions given by the partition of
$n$ determined by the Jordan canonical form of $N$.  By contrast, Carrell obtains a similar result in general Lie type using a different torus action \cite{Car86}.  In Section~\ref{subsec:subregular}, we will
specialize to a particular class of nilpotent operators called the
\emph{subregular} operators.

Lemma~\ref{lemma:facts} shows that for any $g \in GL_n(\mathbb{C})$ the Springer
variety $\mathcal{S}_N$ is homeomorphic to $\mathcal{S}_{g^{-1}Ng}$.
We will assume without loss of generality
that $N$ is in Jordan canonical form, with Jordan blocks weakly decreasing in size.  We
denote by $\lambda_N$ both the partition of $n$ and the Young diagram
corresponding to this decomposition of $N$ into Jordan blocks.

In Lemma~\ref{lemma:facts} Part (3) we define a circle subgroup of the
standard maximal torus $T^n$ of diagonal matrices in $U(n,\C)$.  It
can be described very explicitly in this setting as 
\begin{equation}\label{eq:def S^1 for Springer}
S^1 := \left\{ \left. \begin{bmatrix} t^n & 0 & \cdots & 0 \\ 0 & t^{n-1} &  &
      0 \\ 0 & 0 & \ddots & 0 \\ 0 & 0 &  & t \end{bmatrix}
  \; \right\rvert \;  t \in \C, \; \|t\| = 1 \right\}  \subseteq T^n
\subseteq U(n,\C).
\end{equation}
The maximal torus $T^n$ acts canonically on $GL(n,\C)/B \cong
\Flags(\C^n)$ so $S^1 \subseteq T^n$ also acts naturally.  
In this case Lemma~\ref{lemma:facts} states
\[
\Flags(\C^n)^{S^1} = \Flags(\C^n)^{T^n}.
\]

We now show that the subgroup $S^1$ in Equation~\eqref{eq:def S^1 for Springer} preserves the
Springer variety $\mathcal{S}_N$. 

\begin{lemma}
Let $N$ be a nilpotent operator in Jordan canonical
form. Then the subgroup $S^1$ in Equation~\eqref{eq:def S^1 for Springer}
preserves the Springer 
  variety $\mathcal{S}_N \subseteq \Flags(\C^n)$.
\end{lemma}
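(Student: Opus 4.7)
The plan is to recognize this lemma as essentially a direct application of Lemma~\ref{lemma:facts} Part (3), combined with a concrete identification of the circle $S^1$ in Equation~\eqref{eq:def S^1 for Springer} with the subtorus constructed there.

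First I would observe that when $N$ is in Jordan canonical form with weakly decreasing block sizes, $N$ has the explicit form
\[
N = \sum_{i \in I} E_{i,i+1}, \qquad I \subseteq \{1,2,\ldots,n-1\},
\]
where $i \in I$ precisely when the $(i,i+1)$-entry lies strictly inside a Jordan block (as opposed to at a block boundary). Since $E_{i,i+1} = E_{\alpha_i}$ is the simple root vector for the simple root $\alpha_i$ of $\g = \mathfrak{sl}_n(\C)$, this exhibits $N$ as a sum of simple root vectors, which is the hypothesis required to invoke Lemma~\ref{lemma:facts} Part (3). Moreover, $H = \mathfrak{b}$ trivially satisfies the definition of a Hessenberg space.

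Next I would verify that the concretely defined $S^1$ in Equation~\eqref{eq:def S^1 for Springer} agrees with the rank-one subtorus produced in the proof of Lemma~\ref{lemma:facts} Part (3). For the element $t = \operatorname{diag}(t^n, t^{n-1}, \ldots, t) \in S^1$, the simple root $\alpha_i$ evaluates as
\[
\alpha_i(t) = \frac{t^{n-i+1}}{t^{n-i}} = t \qquad \text{for every } i = 1, \ldots, n-1,
\]
so $\phi(t) = (t,t,\ldots,t)$ lies on the diagonal in $(\C^*)^{n-1}$ and, being of unit length, sits inside the unit circle in that diagonal. Thus $S^1$ is exactly the unit-length real rank-one torus constructed in Lemma~\ref{lemma:facts} Part (3), and that lemma immediately gives that $S^1$ preserves $\mathcal{H}(N, \mathfrak{b}) = \mathcal{S}_N$.

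As a sanity check (and because it is short), I would also note the direct verification: for $t \in S^1$ and any $\alpha_i \in \Delta$, the computation above gives $\operatorname{Ad}(t^{-1})(E_{\alpha_i}) = t^{-1} E_{\alpha_i}$, so $\operatorname{Ad}(t^{-1})(N) = t^{-1} N$. Hence for any $gB \in \mathcal{S}_N$,
\[
\operatorname{Ad}((tg)^{-1})(N) = \operatorname{Ad}(g^{-1})\operatorname{Ad}(t^{-1})(N) = t^{-1}\operatorname{Ad}(g^{-1})(N) \in \mathfrak{b},
\]
since $\operatorname{Ad}(g^{-1})(N) \in \mathfrak{b}$ by hypothesis and $\mathfrak{b}$ is a $\C$-linear subspace. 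Therefore $tgB \in \mathcal{S}_N$, completing the proof. There is no serious obstacle here; the only point to be careful about is recording why $N$ in Jordan canonical form is genuinely a sum of simple root vectors, so that Lemma~\ref{lemma:facts} Part (3) applies.
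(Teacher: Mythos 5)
Your proposal is correct, and your ``sanity check'' at the end is in fact the same calculation as the paper's own proof, just phrased in Lie-theoretic language ($\operatorname{Ad}(\gamma(t)^{-1})N = t^{-1}N$) rather than in the paper's flag/subspace language (the paper shows $\gamma(t)^{-1}N\gamma(t)$ is a nonzero scalar multiple of $N$ and then uses linearity of each $V_i$). Your primary route---observe $N$ in Jordan canonical form is a sum of simple root vectors, then invoke Lemma~\ref{lemma:facts} Part~(3) after matching the concrete $S^1$ of Equation~\eqref{eq:def S^1 for Springer} with the abstract subtorus constructed there---is a valid and slightly more conceptual way to get the same conclusion; it buys you economy by reusing the general lemma, whereas the paper prefers to redo the two-line matrix computation explicitly in this concrete setting (which also has the small pedagogical advantage of not requiring the reader to verify the identification of tori). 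Both are correct, and the content is the same. One tiny quibble: you wrote $\g = \mathfrak{sl}_n(\C)$, but the ambient group in this section is $GL_n(\C)$, so $\g = \mathfrak{gl}_n(\C)$; this has no effect on the argument since the simple root vectors $E_{i,i+1}$ are the same.
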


\begin{proof}
Suppose $V_{\bullet} \in \mathcal{S}_N$ and let $\gamma(t)$ denote the diagonal matrix with entries $t^n, t^{n-1}, t^{n-2}, \ldots, t$ along the diagonal, as in Equation~\eqref{eq:def S^1 for Springer}.  Observe that 
\[N \left( \gamma(t) V_i \right) \subseteq \gamma(t)V_i\]
if and only if 
\[ \left( \gamma(t)^{-1}N\gamma(t) \right)V_i \subseteq V_i.\]
A simple calculation shows that
\[ \gamma(t)^{-1}N\gamma(t) = tN.\]
Moreover $(tN)V_i \subseteq V_i$ if and only if $NV_i \subseteq V_i$ since  $V_i$ is linear.  We conclude that $tV_{\bullet}$ is in $\mathcal{S}_N$ as desired.
\end{proof}

The following proposition is a summary of results in the literature,
phrased in our language. 

\begin{proposition}\label{prop:GKM compatibility for Springer}
  For each nilpotent operator $N: \mathbb{C}^n \rightarrow
  \mathbb{C}^n$ the Springer variety $\mathcal{S}_N$ has no
  odd-dimensional cohomology.  If $N$ is in Jordan canonical form and
  $S^1$ is as in Equation~\eqref{eq:def S^1 for Springer} then the pair
  $(\mathcal{S}_N,S^1)$ is GKM-compatible with $(GL_n(\mathbb{C})/B,
  T)$ with respect to Borel-equivariant cohomology $H^*(-; \F)$.
\end{proposition}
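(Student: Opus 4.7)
The plan is to recognize this proposition as essentially a specialization of Theorem~\ref{theorem:Hessies are GKM-compatible} together with the paving result of Lemma~\ref{lemma: paving dimensions}. The Springer variety $\mathcal{S}_N$ is by definition the nilpotent Hessenberg variety $\mathcal{H}(N, \b)$, so all the general machinery assembled for Hessenberg varieties in the preceding sections applies directly.

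First I would handle the vanishing of odd cohomology. By Lemma~\ref{lemma:facts} Part (2), any two nilpotent operators that are $GL_n(\C)$-conjugate give rise to isomorphic Springer varieties, so without loss of generality we may assume $N$ is in Jordan canonical form. Then Lemma~\ref{lemma: paving dimensions}, Case (2), applies and yields a paving of $\mathcal{S}_N = \mathcal{H}(N,\b)$ by complex affine cells. Since a complex affine paving produces a CW structure with only even-dimensional cells, the cohomology $H^*(\mathcal{S}_N;\F)$ vanishes in odd degrees, giving the first claim.

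For GKM-compatibility we verify the three conditions of Definition~\ref{def:GKM compatible} with $(X,G) = (GL_n(\C)/B, T)$ and $(Y,H) = (\mathcal{S}_N, S^1)$. A nilpotent matrix in Jordan canonical form has superdiagonal entries in $\{0,1\}$, so it is literally a sum of simple root vectors $E_{\alpha_i}$. This is exactly the hypothesis of Lemma~\ref{lemma:facts} Part (3), which supplies the circle subgroup $S^1 \subseteq T$ from Equation~\eqref{eq:def S^1 for Springer} and yields $\mathcal{S}_N^{S^1} = \mathcal{S}_N \cap (GL_n(\C)/B)^T$, establishing condition (a). Conditions (b) and (c) then follow by the Leray--Serre argument outlined in Remark~\ref{remark:leray}: since $H^*(\mathcal{S}_N;\F)$ is concentrated in even degrees (by the first part) and $\F$ has characteristic zero, the Leray--Serre spectral sequence for the Borel fibration $\mathcal{S}_N \to (\mathcal{S}_N)_{S^1} \to BS^1$ collapses, so $H^*_{S^1}(\mathcal{S}_N;\F)$ is a free module over $H^*_{S^1}(\pt;\F)$; and the localization theorem in Borel-equivariant cohomology then forces the restriction map $\iota^*_{\mathcal{S}_N}: H^*_{S^1}(\mathcal{S}_N;\F) \to H^*_{S^1}(\mathcal{S}_N^{S^1};\F)$ to be injective.

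There is really no single ``hard step'' here, since each ingredient has been set up earlier in the paper; the proof is an orchestration of existing results. The only point requiring a moment's care is observing that a Jordan-form nilpotent is actually a sum of simple root vectors in the Lie type $A$ sense, so that Lemma~\ref{lemma:facts} Part (3) applies; this is a direct check from the standard identification of $\g_{\alpha_i}$ with the $(i,i+1)$ matrix entry.
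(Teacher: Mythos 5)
Your proof is correct and essentially a different packaging of the same ideas the paper uses, with one genuinely different ingredient: for the vanishing of odd cohomology, the paper simply cites Spaltenstein \cite{Spa76}, whereas you derive it from the reduction-to-Jordan-form argument (Lemma~\ref{lemma:facts} Part (2)) plus the affine paving in Lemma~\ref{lemma: paving dimensions} Case (2). Your route is arguably more self-contained within the paper, and in fact matches how the paper itself proves the analogous vanishing claim inside Theorem~\ref{theorem:Hessies are GKM-compatible}. One small caveat worth noting: the ``nonzero only in even degree'' conclusion in the statement of Lemma~\ref{lemma: paving dimensions} is phrased explicitly only for Case (1), so when you apply it to Case (2) you are (correctly, but silently) invoking the standard fact that a paving by complex affines forces homology to be concentrated in even degree; it would be cleaner to flag that step rather than treat it as already stated. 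For the GKM-compatibility, your verification of condition (a) via Lemma~\ref{lemma:facts} Part (3) and conditions (b)--(c) via Remark~\ref{remark:leray} is exactly the paper's argument (and you are right that this half is really Theorem~\ref{theorem:Hessies are GKM-compatible} specialized to $H=\b$); if anything your write-up is more explicit than the paper's, which leaves condition (a) to be inferred from the surrounding discussion.
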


\begin{proof}
  Spaltenstein proved that the ordinary cohomology of Springer
  varieties is zero in odd degrees 
  \cite{Spa76}.
  The result follows from the argument in Remark~\ref{remark:leray}.
\end{proof}

We now compute the fixed points of the Springer variety
$\mathcal{S}_N$ with respect to this $S^1$-action.  
Given a partition $\lambda = (\lambda_1 \geq
\lambda_2 \geq \cdots \geq \lambda_s)$ of $n$ the permutation  \(w \in S_n\) has {\em descents in positions given by $\lambda$} if
\[
w(i) > w(i+1) \Rightarrow i \in \{\lambda_1, \lambda_1+\lambda_2,\lambda_1+\lambda_2+\lambda_3,
\cdots,   \lambda_1 + \lambda_2+\lambda_3 + \cdots + \lambda_s \}. 
\]
For example,  the permutation \(w
= (24581736)\) has descents in the positions given by $\lambda = (4,2,2)$.

\begin{theorem}\label{theorem:fixed points of Springers}
  Let $N: \mathbb{C}^n \rightarrow
  \mathbb{C}^n$ be a nilpotent operator in Jordan canonical
  form whose Jordan blocks weakly decrease in size. Let $\lambda_N$ 
  be the corresponding partition of $n$. The $S^1$-fixed points
  of $\mathcal{S}_N$ are in bijection with the 
  set 
\[
\{ w \in S_n: w^{-1} \textup{ has descents in the positions given by } \lambda_N \}.\]
The bijection sends the permutation $w$ to the fixed point  $wB $, where
$w$ also denotes the permutation matrix whose $i^{th}$ column is the standard basis vector $e_{w(i)}$ for all $i$.
\end{theorem}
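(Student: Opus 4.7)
The plan is to combine the general identification of $S^1$-fixed points for Hessenberg varieties with the explicit Jordan-block description of $N$, and then translate the resulting incidence condition into a descent condition on $w^{-1}$.

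First, by Lemma~\ref{lemma:facts}(3) and Equation~\eqref{eq:regHess-fixedpts}, applied to the Springer variety (which is the Hessenberg variety for $H=\mathfrak{b}$), we have
\[
\mathcal{S}_N^{S^1} \;=\; \mathcal{S}_N \cap \Flags(\C^n)^{T^n},
\]
and the right-hand side lies inside $\Flags(\C^n)^{T^n}=\{wB:w\in S_n\}$. Thus I need only determine which permutation flag $wB$ satisfies $NV_i\subseteq V_i$ for every $i$, where
\[
V_i \;=\; \mathrm{span}_{\C}\bigl(e_{w(1)},e_{w(2)},\ldots,e_{w(i)}\bigr).
\]

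Next I will use the hypothesis that $N$ is in Jordan canonical form with Jordan blocks weakly decreasing in sizes $\lambda_1\ge\lambda_2\ge\cdots\ge\lambda_s$. Writing the partial sums $\mu_0=0$, $\mu_r=\lambda_1+\cdots+\lambda_r$, the standard basis $\{e_1,\ldots,e_n\}$ partitions into blocks $\{e_{\mu_{r-1}+1},\ldots,e_{\mu_r}\}$, and $N$ acts by
\[
N e_j \;=\; \begin{cases} 0, & j\in\{\mu_0+1,\mu_1+1,\ldots,\mu_{s-1}+1\},\\ e_{j-1}, & \text{otherwise.}\end{cases}
\]
Let $\mathcal{T}:=\{\mu_{r-1}+1:1\le r\le s\}$ denote the set of \textbf{Jordan-block tops}. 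Since each $V_i$ is spanned by standard basis vectors, the condition $NV_i\subseteq V_i$ is equivalent to requiring that for every $j\in\{w(1),\ldots,w(i)\}$ with $j\notin \mathcal{T}$, the vector $e_{j-1}$ also lies in $V_i$, i.e.\ $j-1\in\{w(1),\ldots,w(i)\}$.

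Now I will rewrite this condition using $\sigma:=w^{-1}$, where $\sigma(j)$ records the position at which $j$ appears in the one-line notation of $w$. The condition $j\in\{w(1),\ldots,w(i)\}$ becomes $\sigma(j)\le i$, so the invariance of every $V_i$ is equivalent to
\[
\sigma(j-1)\le\sigma(j)\qquad\text{for all }j\notin\mathcal{T}\text{ (equivalently, for all }j-1\notin\{\mu_1,\ldots,\mu_{s-1}\}\text{).}
\]
Setting $k=j-1$, this says $\sigma(k)<\sigma(k+1)$ whenever $k\notin\{\lambda_1,\lambda_1+\lambda_2,\ldots,\mu_{s-1}\}$; equivalently, every descent of $\sigma=w^{-1}$ occurs at a position in $\{\lambda_1,\lambda_1+\lambda_2,\ldots\}$, which is precisely the definition of $w^{-1}$ having descents in the positions given by $\lambda_N$. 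This proves the bijection, sending $w\mapsto wB$.

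The only subtlety — and in some sense the main step — is the passage from the ``for all $i$'' incidence condition to a purely local condition on $\sigma(k)$ versus $\sigma(k+1)$; one direction is immediate by taking $i=\sigma(j)$, and the other follows by noting that $\sigma(j-1)<\sigma(j)\le i$ forces $\sigma(j-1)\le i$. Everything else is a bookkeeping check of conventions.
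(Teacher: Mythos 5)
Your proof is correct and follows essentially the same route as the paper's: reduce to the $T^n$-fixed points (permutation flags), test membership in $\mathcal{S}_N$, and unwind the resulting condition into a descent condition on $w^{-1}$. The only cosmetic difference is that you check membership via subspace stability ($NV_i\subseteq V_i$ on spans of standard basis vectors), while the paper conjugates the matrix $N$ by $w$ and demands upper-triangularity of $\sum_{i\notin A}E_{w^{-1}(i),w^{-1}(i+1)}$; these are the same computation in two notations.
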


\begin{proof}

Since $\Flags(\C^n)^{T^n} = \Flags(\C^n)^{S^1}$ it suffices to find the
intersection 
\[
\Flags(\C^n)^{T^n} \cap \mathcal{S}_N.
\]
The $T^n$-fixed points of
$\Flags(\C^n)$ consist precisely of the permutation flags
\(\{wB: w \in S_n\}\) where $w$ is a permutation matrix in $GL(n,\C)$ whose $i^{th}$ column has the standard basis vector $e_{w(i)}$. The definition of Springer varieties
in Equation~\eqref{eq:def Springer} says that $wB$ is in $\mathcal{S}_N$ exactly when ${w}^{-1}N{w}$ is  upper-triangular.  Let $E_{i,j}$ denote the $n \times n$
  matrix with $1$ in the $(i,j)$-th entry and $0$ in all other
  entries.  The matrix $N$ is in Jordan canonical form, so $N = \sum_{i \not \in A} E_{i,i+1}$  where
  \[A = \{ (\lambda_N)_1, (\lambda_N)_1+(\lambda_N)_2,  \cdots, (\lambda_N)_1 +(\lambda_N)_2+(\lambda_N)_3+\cdots+(\lambda_N)_s \}.\]
(In other words, the sum is over pairs $i,i+1$ in the same part of the partition $\lambda_N$.)  
This means ${w}B$ is in $\mathcal{S}_N$ 
  if and only if
 \[{w}^{-1} \sum_{i \not \in A} E_{i,i+1} {w} = \sum_{i \not \in A} E_{w^{-1}(i),w^{-1}(i+1)} \in \b,\] 
or equivalently
\[
i \not \in A \Rightarrow w^{-1}(i) <  w^{-1}(i+1).
\]
As desired, this implies  
  $w^{-1}$ has descents in the positions given by the
  partition $\lambda_N$. 
\end{proof}

\begin{example}
Let \(n=4\) and take $N$ to be the
matrix with $2$ Jordan blocks each of size $2$, so $N = E_{12} + E_{34}$. By
Theorem~\ref{theorem:fixed points of Springers}, the $S^1$-fixed points of
$\mathcal{S}_N$ are the inverses of the following permutations,
written in one-line notation: 
\[
1234, 1324, 1423, 2314, 2413, 3412.
\] 
Informally, the matrix $w^{-1}Nw$ is the sum of $E_{w^{-1}(i), w^{-1}(i+1)}$ over $i$ such that 
 $i,i+1$ are in the same part of the partition $\lambda_N$.
For example, the matrix corresponding to the fixed point
$w^{-1}=2314$ is $E_{23} + E_{14}$ while the matrix corresponding to
$w^{-1}=3412$ is $E_{34}+E_{12}$.
\end{example}

\subsection{The subregular Springer representation} A nilpotent linear operator \(N: \C^n \to \C^n\) is
\textbf{subregular} if the partition associated to its Jordan canonical form is $(n-1,1)$, namely it has one Jordan block of size $n-1$ and
one of size $1$. If $N$ is subregular, the Springer variety
$\mathcal{S}_N$ is called a \textbf{subregular Springer variety} and
Springer's representation
on  $H^*(\mathcal{S}_N;\C)$ is the \textbf{subregular Springer
  representation}.  In this section, we collect results about the subregular Springer variety and representation.
  
Theorem~\ref{theorem:fixed points of Springers} implies
that the $S^1$-fixed points of the subregular Springer variety are in
bijective correspondence with permutations whose descents are given by
the partition $(n-1,1)$. The one-line notation of such a
permutation $w$ increases in the first $n-1$
entries. Since the last entry can be any integer between $1$ and $n$,
the fixed points in one-line notation are precisely 
\[
\mathcal{S}_N^{S^1} = \{w_i := 1 \hsm 2 \hsm 3 \cdots \hsm i-1 \hsm
\hat{i} \hsm  i+1 \cdots n \hsm i \textup{ for each $i$ with } 1
\leq i \leq n\},
\]
where $\hat{i}$ indicates that the integer $i$ is skipped. Note that
$w_n$ is the identity element in $S_n$.

We describe Garsia-Procesi's construction of the subregular Springer
representation, using a classical description of irreducible
representations of $S_n$. A filling of the Young diagram $(n-1,1)$
with the numbers $1,2,\ldots,n$ without repetition is {\em row-strict}
if each row increases left-to-right.  (Hence a row-strict subregular filling
is either a Young tableau or has $1,2,3,\ldots,n-1$ in the top row and
$1$ in the bottom row.)

Let 
$M^{(n-1,1)}$ denote the complex vector space whose basis is the set of
row-strict fillings
with shape $(n-1,1)$. Define an $S_n$-action on $M^{(n-1,1)}$ as
follows.  Given $w \in S_n$ and a row-strict filling $T$ of shape
$(n-1,1)$, define the filling $w(T)$ by:
\begin{itemize}
\item For each $i=1,2,3,\ldots,n$, place $w(i)$ in the box where $T$
  had entry $i$.   
\item Reorder each row so it increases left-to-right.
\end{itemize}
By construction $w(T)$ is row-strict of shape $(n-1,1)$. This is a
well-defined action of $S_n$ on the set of row-strict fillings of
shape $(n-1,1)$, and extends by $\C$-linearity to a representation of
$S_n$ on $M^{(n-1,1)}$.  (These claims generalize to arbitrary
partitions $\lambda$ of $n$, see e.g. \cite[Section
7.2]{Ful97}.)

Combining several results (see Garsia-Procesi's summary \cite[page
84]{GarPro92} and e.g. Fulton's text for background \cite[Section
7.2]{Ful97}) yields the following.

\begin{proposition}\label{prop:Garsia-Procesi} {\bf (Garsia-Procesi)} 
  The subregular Springer representation on $H^*(\mathcal{S}_{(n-1,1)};\C)$ 
is isomorphic, as an ungraded
  representation, to the $S_n$-representation
  $M^{(n-1,1)}$ defined above.  Moreover, suppose $M^{(n-1,1)}$ is given
  the grading inherited from
  the cohomology ring $H^*(\mathcal{S}_{(n-1,1)};\C)$.  Then the set of $n-1$ vectors 
\[
v_j := \begin{array}{|c|c|c|c|} \cline{1-4} 1 & 2 & \cdots & n \\
  \cline{1-4} j & \multicolumn{3}{c}{} \\ \cline{1-1} \end{array}
- \begin{array}{|c|c|c|c|} \cline{1-4} 2 & 3 & \cdots & n \\
  \cline{1-4} 1 & \multicolumn{3}{c}{} \\ \cline{1-1} \end{array}
\]
form a $\C$-basis
  $\{v_2,v_3,\ldots,v_n\}$ for the top-degree graded piece of
  $M^{(n-1,1)}$. 
A basis for the zero-degree graded piece is given by the vector
\[
v_0 := \sum_{j=1}^n \begin{array}{|c|c|c|c|} \cline{1-4} 1 & 2 & \cdots
  & n \\ \cline{1-4} j & \multicolumn{3}{c}{} \\
  \cline{1-1} \end{array}. 
\]
\end{proposition}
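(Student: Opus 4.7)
The plan is to deduce this proposition by combining Garsia-Procesi's explicit analysis of Springer representations \cite{GarPro92} with a direct identification of the basis vectors $v_0$ and $\{v_j\}_{j=2}^n$ in the subregular case. The key simplification is that for the shape $(n-1,1)$ the graded character is supported in only two cohomological degrees, so the graded structure is essentially determined by the ungraded isotypic decomposition together with the degree in which each isotypic component sits.

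First, I would observe that $M^{(n-1,1)}$ is isomorphic, as an ungraded $S_n$-representation, to the defining permutation representation of $S_n$ on the set $\{1, 2, \ldots, n\}$. Indeed, a row-strict filling of shape $(n-1,1)$ is uniquely determined by its bottom-row entry $j$, since the remaining $n-1$ numbers must then fill the top row in increasing order. Under this bijection, the tableau with bottom entry $j$ corresponds to a basis vector $e_j$, and the prescribed $S_n$-action on fillings becomes $w \cdot e_j = e_{w(j)}$. This permutation representation decomposes as $\mathbf{1} \oplus V_{\mathrm{std}}$, where $V_{\mathrm{std}}$ denotes the $(n-1)$-dimensional standard representation.

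Second, I would analyze the geometric side. The subregular Springer variety $\mathcal{S}_{(n-1,1)}$ is a chain of $n-1$ copies of $\P^1$ meeting transversally at consecutive points (see e.g.\ \cite{Slo80}), so $H^0(\mathcal{S}_{(n-1,1)};\C) \cong \C$ and $H^2(\mathcal{S}_{(n-1,1)};\C) \cong \C^{n-1}$, with all other cohomology vanishing. Springer's theorem identifies $H^2(\mathcal{S}_{(n-1,1)};\C)$ with the irreducible representation $V_{(n-1,1)}$ corresponding to the partition $(n-1,1)$, which in type $A$ is precisely $V_{\mathrm{std}}$, while $H^0$ is the trivial representation. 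Thus $H^*(\mathcal{S}_{(n-1,1)};\C) \cong \mathbf{1} \oplus V_{\mathrm{std}}$ as ungraded $S_n$-representations, matching $M^{(n-1,1)}$ and proving the first assertion (alternatively, this can be cited directly from Garsia-Procesi).

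For the graded claims, the two isotypic constituents of $M^{(n-1,1)}$ lie in distinct cohomological degrees, so the ungraded isomorphism automatically refines to a graded isomorphism. The vector $v_0 = \sum_{j=1}^n e_j$ is manifestly $S_n$-invariant and so spans the trivial constituent, which must therefore lie in $H^0$. Under the identification $M^{(n-1,1)} \leftrightarrow \C^n$, the vectors $v_j$ become $e_j - e_1$ and hence lie in the sum-zero subspace; they are linearly independent for $j = 2, \ldots, n$, and since $\dim V_{\mathrm{std}} = n-1$ they form a basis of the standard constituent, which must lie in the top-degree piece $H^2$. The main obstacle to a completely self-contained proof is the construction of the graded isomorphism with enough precision to locate specific vectors in specific cohomological degrees, but in the subregular case this obstacle largely evaporates because only two nonzero graded pieces exist and they carry distinct $S_n$-representations, so isotypic and graded components automatically coincide.
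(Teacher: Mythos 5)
The paper does not prove this proposition; it states it as a consequence of results cited from Garsia--Procesi \cite{GarPro92} and Fulton \cite{Ful97}. Your proposal supplies a self-contained argument, and it is correct. You correctly observe that $M^{(n-1,1)}$ is the defining permutation representation $\C^n$, hence decomposes as $\mathbf{1}\oplus V_{\mathrm{std}}$; you then match this against the graded cohomology of $\mathcal{S}_{(n-1,1)}$ using Springer's theorem (with the same convention the paper adopts, namely that the top cohomology for Jordan type $\lambda$ carries the irreducible $V_\lambda$) together with $H^0 = \mathbf{1}$. Because $\mathbf{1}$ and $V_{\mathrm{std}}$ are non-isomorphic irreducibles sitting in distinct degrees, the isotypic decomposition pins down the grading, and the explicit vectors $v_0$ and $v_j = e_j - e_1$ visibly span these two pieces. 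The comparison with Garsia--Procesi: their result is obtained by constructing the $S_n$-equivariant graded quotient of a polynomial ring (the Tanisaki ideal presentation) and works uniformly for all $\lambda$, whereas your argument is a character-level shortcut that succeeds precisely because the subregular representation is multiplicity-free with exactly two graded pieces in distinct degrees; it would not immediately generalize to other Jordan types where a given irreducible can appear in several degrees.

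One small remark: the appeal to Slodowy's description of $\mathcal{S}_{(n-1,1)}$ as a transversal chain of $n-1$ copies of $\P^1$ is more than you need. The Betti numbers $b_0=1$, $b_2=n-1$, $b_{2k}=0$ for $k\geq 2$ already follow from facts available in the paper: $\mathcal{S}_N$ is connected, has vanishing odd cohomology (Spaltenstein, cited in Proposition~\ref{prop:GKM compatibility for Springer}), has complex dimension $\binom{2}{2}=1$ for Jordan type $(n-1,1)$, and admits an affine paving with exactly $n = |\mathcal{S}_N^{S^1}|$ cells by Lemma~\ref{lemma: paving dimensions} and Theorem~\ref{theorem:fixed points of Springers}. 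Either route gives the required Betti numbers; the chain-of-$\P^1$'s description is a valid but slightly overpowered input.
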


We can explicitly compute the character of the subregular Springer representation from this description. The representation
preserves degrees, so we analyze the characters on each degree separately.  Denote the character of the Springer representation on $H^{2i}(\mathcal{S}_N; \C)$ by $\chi^i: S_n \rightarrow \mathbb{Z}$.  For instance, if $w \in S_n$ the integer $\chi^1(w)$ is the trace of the linear operator on $H^2(\mathcal{S}_N; \C)$ corresponding to $w$.

\begin{corollary}\label{corollary:trace}
Let $S_n$ act on $H^*(\mathcal{S}_N;\C) \cong M^{(n-1,1)}$ via the
subregular Springer representation.  Then
\[ 
\chi^1(w)= \# \{\textup{ fixed points of } w\} -1 = \#\{ j \in \{1,2,\ldots,n\}: w(j)= j \} - 1
\]
for all \(w \in S_n\).  Also for all \(w \in S_n\)
\[
\chi^0(w)= 1.
\]
In particular the subregular Springer representation in the
zero-degree piece is the trivial representation.
\end{corollary}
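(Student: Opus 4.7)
The plan is to derive both trace formulas directly from the explicit bases of $M^{(n-1,1)}$ provided in Proposition~\ref{prop:Garsia-Procesi}, using the fact that the $S_n$-action on row-strict fillings of shape $(n-1,1)$ is essentially the standard permutation action. For each $j \in \{1,2,\ldots,n\}$, let $T_j$ denote the row-strict filling with bottom entry $j$ and top row the increasing arrangement of $\{1,2,\ldots,n\} \setminus \{j\}$. The set $\{T_j\}_{j=1}^{n}$ is a $\C$-basis for $M^{(n-1,1)}$. The first step will be to check from the definition of the $S_n$-action that $w \cdot T_j = T_{w(j)}$ for all $w \in S_n$ and $j$, since replacing the entry $i$ by $w(i)$ and sorting the top row simply permutes the fillings $T_j$ according to which integer ends up in the lower box.

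For the zero-degree piece, Proposition~\ref{prop:Garsia-Procesi} gives the basis vector $v_0 = \sum_{j=1}^n T_j$. The computation $w \cdot v_0 = \sum_j T_{w(j)} = \sum_j T_j = v_0$ is immediate, so $\chi^0(w) = 1$ for all $w$, establishing the second assertion of the corollary.

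For the top-degree piece (which equals $H^2(\mathcal{S}_N;\C)$ since $\mathcal{S}_N$ has complex dimension one), I will work in the basis $\{v_j = T_j - T_1\}_{j=2}^n$ from Proposition~\ref{prop:Garsia-Procesi}. Applying $w$ gives $w \cdot v_j = T_{w(j)} - T_{w(1)}$. Using $T_k = v_k + T_1$ for $k \geq 2$ and $T_1 = T_1$ itself, the coefficient of $v_j$ in this expression is $[w(j) = j] - [w(1) = j]$, where $[\,\cdot\,]$ is the Iverson bracket. Summing over $j = 2, \ldots, n$ yields
\[
\chi^1(w) = \sum_{j=2}^n [w(j)=j] - \sum_{j=2}^n [w(1)=j] = \bigl(\#\mathrm{Fix}(w) - [w(1)=1]\bigr) - \bigl(1 - [w(1)=1]\bigr) = \#\mathrm{Fix}(w) - 1,
\]
as claimed.

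No step is a serious obstacle; the mildly delicate point is the bookkeeping for the asymmetric basis $\{v_j\}_{j \geq 2}$, which treats the index $1$ differently from the others. A sanity check is that the answer matches the known character of the standard $(n-1)$-dimensional irreducible subrepresentation of the permutation representation of $S_n$ on $\C^n \cong \mathrm{span}\{T_1,\ldots,T_n\}$, which is exactly the subrepresentation cut out by the relation defining the $v_j$'s.
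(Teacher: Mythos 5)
Your proof is correct and follows exactly the route the paper intends: the paper's proof simply cites Sagan's Exercise 2.12.4 and says it is "a nice exercise for the reader, given the explicit basis for $H^2(\mathcal{S}_N;\C)$ described in Proposition~\ref{prop:Garsia-Procesi}." You have carried out that exercise in full, verifying $w\cdot T_j = T_{w(j)}$ and then tracking coefficients in the asymmetric basis $\{v_j\}_{j\ge 2}$, including the correct bookkeeping for the index $1$; the $\chi^0$ computation matches the paper's verbatim.
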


\begin{proof}
  The first part of the claim is an exercise from Sagan \cite[Exercise 2.12.4]{Sag91}, and a nice exercise for the
  reader, given the explicit basis for $H^2(\mathcal{S}_N;\C)$
  described in Proposition~\ref{prop:Garsia-Procesi}.

The second part can be seen by inspection.  By definition $w \cdot v_0
= v_0$ for all $w \in S_n$.  This means $\chi^0(w)=1$ for all $w$, so
the zero-degree piece is the trivial representation, as desired.
\end{proof}

\subsection{Lifting the Springer action to the $S^1$-equivariant cohomology of
  subregular Springer varieties}\label{subsec:subregular}

We now lift the Springer representation on the ordinary cohomology
$H^*(\mathcal{S}_N; \C)$ to an action of $S_n$ on the $S^1$-equivariant cohomology
$H^*_{S^1}(\mathcal{S}_N; \C)$ in the case when $N$ is subregular. To define 
the lift, we first build a
convenient $H^*_{S^1}(\pt;\C)$-module basis of
$H^*_{S^1}(\mathcal{S}_N;\C)$ using upper-triangular Betti pinball.

In this context, the
commutative diagram~\eqref{eq:main commutative diagram} becomes: 
\begin{equation} \label{eqn:Springer diagram}
\xymatrix{
H^*_T(GL(n,\C)/B;\C) \ar @{^{(}->}[r] \ar[d] & H^*_T((GL(n,\C)/B)^T;\C) \cong
\prod_{w \in S_n} H^*_T(\pt;\C) \cong \prod_{w \in S_n} \Sym(\t^*) \ar[d] \\
H^*_{S^1}(\mathcal{S}_N;\C) \ar @{^{(}->}[r] & H^*_{S^1}(\mathcal{S}_N^{S^1};\C)
\cong \prod_{w \in \mathcal{S}_N^{S^1}} H^*_{S^1}(\pt;\C) \cong \prod_{w \in \mathcal{S}_N^{S^1}} \Sym(\Lie(S^1)^*)
}
\end{equation}
where the left vertical arrow is the composition
\[H^*_T(GL(n,\C)/B;\C) \longrightarrow H^*_{S^1}(GL(n,\C)/B;\C) \longrightarrow H^*_{S^1}(\mathcal{S}_N;\C)\]
and the right vertical arrow is zero on each component corresponding to $w \not \in  \mathcal{S}_N^{S^1}$.
As before, we denote the equivariant Schubert class corresponding to
$w$ in $H^*_T(GL(n,\C)/B;\C)$ by $\sigma_w$ for each $w \in S_n$.  

For each $i=1,2,\ldots,n-1$, let $s_i$ be the permutation on
$\{1,2,\ldots,n\}$ that exchanges $i$ and $i+1$ and leaves the other
numbers fixed. (In general Lie type $s_i$ is the reflection
$s_{\alpha_i}$ for a choice of simple roots.)
The proof of the next theorem is similar to that of Theorem~\ref{theorem:
  classical basis for Peterson}. 

\begin{theorem} \label{theorem: pinball for subregular} 
Let $N$ be a subregular nilpotent linear operator $N: \C^n \to \C^n$ and let
$\mathcal{S}_N$ denote its associated subregular Springer variety.  
Then the association 
\[
w_i^{-1} \mapsto v_i := \begin{cases} e \quad \textup{ if } i=n \\ s_i \quad
  \textup{ if } 1 \leq i \leq n-1 \end{cases}
\]
is an outcome of
  a successful game of upper-triangular Betti pinball, where $v_i =
  roll(w_i^{-1})$. 
In particular, the classes 
$\{p_{v_i} \}_{i=1}^n = \{p_e, p_{s_1}, p_{s_2}, \ldots, p_{s_{n-1}}\}$
form an $H^*_{S^1}(\pt;\C)$-module basis for 
  $H^*_{S^1}(\mathcal{S}_N;\C)$.
\end{theorem}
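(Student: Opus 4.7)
The plan is to explicitly exhibit a successful outcome of upper-triangular Betti pinball on the board $\mathcal{I} = S_n$ (with Bruhat order and rank $\rho(w)=\ell(w)$), initial subset $\mathcal{J} = \mathcal{S}_N^{S^1}$, and target Betti numbers $\mathbf{b}=(1,n-1)$, realizing the prescribed assignment $w_i^{-1}\mapsto v_i$. The basis conclusion will then follow from Proposition~\ref{prop:pinball-betti}(1), using GKM-compatibility of $(\mathcal{S}_N, S^1)$ with $(GL(n,\C)/B, T)$ (Proposition~\ref{prop:GKM compatibility for Springer}) and the fact that the equivariant Schubert classes $\{\sigma_w\}_{w\in S_n}$ form a rank-homogeneous poset-upper-triangular basis of $H^*_T(GL(n,\C)/B;\C)$ (Remark~\ref{remark:GKM conditions}). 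I would first record that the Betti numbers are $b_0=1$ and $b_2=n-1$: by Proposition~\ref{prop:Garsia-Procesi} the Springer module $M^{(n-1,1)}$ has dimension $n$ with $\C$-basis $\{v_0\}$ in degree $0$ and $\{v_2,\ldots,v_n\}$ in degree $2$, while odd cohomology vanishes by Proposition~\ref{prop:GKM compatibility for Springer}.

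First I would compute $w_i^{-1}$ as a reduced word. A direct calculation from the cycle description $w_i=(i,i{+}1,\ldots,n)$ gives $w_i^{-1}=s_is_{i+1}\cdots s_{n-1}$ for $1\le i\le n-1$ (and $w_n^{-1}=e$), which is visibly a reduced word of length $n-i$. Since each $w_i^{-1}$ is a reduced subword of $w_{i-1}^{-1}$, the subword characterization of Bruhat order implies that $\mathcal{J}$ forms a chain
\[
e=w_n^{-1}<w_{n-1}^{-1}<\cdots<w_1^{-1}
\]
inside $S_n$. The induced partial order on $\mathcal{J}$ is therefore already a total order $\prec$, and I would drop the balls in order of increasing length.

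Next I would describe the trajectory for each ball and verify that the walls do not interrupt it. At step $k$ the ball $j_k:=w_{n-k+1}^{-1}=s_{n-k+1}s_{n-k+2}\cdots s_{n-1}$ follows the chain of cover relations obtained by successively deleting the rightmost letter,
\[
s_{n-k+1}\cdots s_{n-1} > s_{n-k+1}\cdots s_{n-2} > \cdots > s_{n-k+1}s_{n-k+2} > s_{n-k+1},
\]
coming to rest at $v_{n-k+1}=s_{n-k+1}$. Every intermediate vertex contains $s_{n-k+1}$ as its leftmost letter, while for each $\ell<k$ the principal order ideal $\mathcal{L}_{j_\ell}$ consists of reduced subwords of $s_{n-\ell+1}\cdots s_{n-1}$ and is therefore supported on simple reflections indexed by $\{n-\ell+1,\ldots,n-1\}$; since $n-\ell+1>n-k+1$, none of these ideals contains an intermediate vertex of the trajectory. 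The only cover of $s_{n-k+1}$ is $e$, which is walled off from step $1$, so the ball does stop at $s_{n-k+1}$. As for Betti walls, after step $1$ the target $b_0=1$ is filled, placing walls on all edges into the unique rank-$0$ slot $e$, but the trajectory never revisits $e$; the rank-$1$ target $b_1=n-1$ is only exhausted at the final step, so no premature Betti wall on a rank-$1$ slot appears. Together this shows that the trajectory is legal.

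Finally, the resulting rolldown set is $\{e,s_1,s_2,\ldots,s_{n-1}\}$, containing exactly one element of rank $0$ and $n-1$ elements of rank $1$, matching $\mathbf{b}=(1,n-1)$. Hence the game is a successful outcome of upper-triangular Betti pinball, and Proposition~\ref{prop:pinball-betti}(1) delivers the claimed $H^*_{S^1}(\pt;\C)$-module basis $\{p_{v_i}\}_{i=1}^{n}=\{p_e,p_{s_1},\ldots,p_{s_{n-1}}\}$. I expect the main obstacle to be the wall-avoidance argument in the middle paragraph; the key combinatorial observation that makes it go through is that the smallest-index simple reflection appearing in $j_k$ is strictly smaller than any simple reflection appearing in any $j_\ell$ with $\ell<k$, which separates the intermediate vertices of the current trajectory from the cumulative walled region.
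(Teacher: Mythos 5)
Your proposal is correct and follows essentially the same route as the paper's proof: the same reduced word $w_i^{-1} = s_i s_{i+1}\cdots s_{n-1}$, the same (forced) total order, the same rolldowns $v_i$, and the same appeal to Proposition~\ref{prop:pinball-betti}. Your middle paragraph is a bit more explicit than the paper's — you verify that the intermediate slots along the trajectory avoid the walled regions, whereas the paper only records that the terminal slot $v_i$ lies outside $\mathcal{L}_{w_j^{-1}}$ for earlier balls — but both rest on the same observation that $\mathcal{L}_{w_j^{-1}}$ involves only $s_\ell$ with $\ell \geq j > i$.
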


\begin{proof}
Recall that $w_n$ is the identity $e \in S_n$. For each \(i=1,2,\ldots,n-1\) note that
\begin{equation}\label{eq:reduced word decomposition of w_i}
w_i = s_{n-1} s_{n-2} \cdots s_{i+1}s_i
\end{equation}
is a reduced word decomposition of $w_i$. 
Thus for each $i=1,2,\ldots, n-1$, we have $w_i > s_{i}$ and $w_i \not
> s_j$ for any $j < i$. Moreover \(w_n^{-1} < w_{n-1}^{-1} < \cdots <
w_1^{-1}\) so Bruhat order totally orders the $S^1$-fixed points.
In particular the choice of 
total order required to play pinball is uniquely determined in this case.

  Now we play upper-triangular Betti pinball with board $S^n = \left( GL(n,\C)/B \right)^{S^1}$ and initial subset $\left(\mathcal{S}_N\right)^{S^1}$. 
We saw that $v_i  < w_i^{-1}$ for all $i$. In upper-triangular pinball, 
walls are never placed between $w_i^{-1}$ and $v_i$ 
because if $w_j < w_i^{-1}$ then $j>i$ and so $v_i \not < w_j^{-1}$. 
Finally \(\ell(v_n) = 0\) and \(\ell(v_i) = 1\) for \(i=1,2,\ldots,n-1\).  Comparing with 
the Betti numbers of $\mathcal{S}_N$ in Proposition~\ref{prop:Garsia-Procesi}, we conclude
 that the $\{v_i\}_{i=1}^n$  are a 
successful outcome of upper-triangular Betti pinball. 
Applying Proposition~\ref{prop:pinball-betti}  gives the
claim.
\end{proof}

With a basis for $H^*_{S^1}(\mathcal{S}_N;\C)$ in hand, we may now
discuss our construction of an $S_n$-representation, for which we
depend on a previous construction by Kostant and Kumar of a
$W$-representation on $H^*_T(G/B;\C)$ when $G$ is any Kac-Moody group
\cite{KosKum86}. We work in Lie type $A$, for which $W
=S_n$ and $G/B \cong \Flags(\C^n)$. 

In the case of $S_n$ acting on $H^*_T(GL(n,\C)/B;\C)$, Kostant and Kumar's action is defined as
follows. As before, denote the $u$-th coordinate of a class $\sigma \in
H^*_T(GL(n,\C)/B;\C)$ by $\sigma(u)$. Let \(w,u \in W\), and let $\sigma$ be any element of
$H^*_T(GL(n,\C)/B;\C)$. The element $w \cdot \sigma$ is defined by the
equation
\begin{equation}\label{eq:def K-K action}
(w \cdot \sigma)(u) := \sigma(uw).
\end{equation}
This Kostant-Kumar action is defined componentwise, so it commutes with
the natural maps induced on $H^*_{T}(GL(n,\C)/B;\C)$ and
$H^*_{T}((GL(n,\C)/B)^T;\C)$ by $S^1 \hookrightarrow T$.
This implies that Kostant-Kumar's  action descends to an action on
$H^*_{S^1}(GL(n,\C)/B;\C)$. 
(We warn the reader that not all $S_n$-actions on 
$H^*_{T}(G/B;\C)$ descend to $H^*_{S^1}(G/B;\C)$; 
Tymoczko analyzes another natural $S_n$-action
that does \emph{not} \cite{Tym08}.)

The main result of this section is that Kostant-Kumar's  action gives
rise to an $S_n$-action on $H^*_{S^1}(\mathcal{S}_N;\C)$ when
$\mathcal{S}_N$ is a subregular nilpotent Springer variety, and that
this action lifts the Springer representation to $H^*_{S^1}(\mathcal{S}_N;\C)$.
The first step is to show that Kostant-Kumar's  action on
$H^*_{S^1}(GL(n,\C)/B;\C)$ preserves the $H^*_{S^1}(\pt;\C)$-submodule
spanned by the elements corresponding to the rolldowns from
Theorem~\ref{theorem: pinball for subregular}.

The following proposition gives a special case of a more general
formula of Kostant-Kumar \cite[Proposition 4.24.g]{KosKum86}.  
The interested reader may also prove it using the following special cases of Billey's formula for the classes
$\sigma_{s_j}$: 
\[\sigma_{s_j}(w) = \alpha_j \textup{  for  } w \in \{s_j\} \cup \{ s_j s_i: i \neq j, i=1,\ldots, n-1\} \cup \{s_i s_j: i \neq j \pm 1, i \neq j, i = 1,\ldots, n-1\}\] 
and 
\[\sigma_{s_j}(w) = \alpha_i+\alpha_j \textup{  for  } w = s_{j-1}s_j
\textup{  or  } s_{j+1}s_j.\]

\begin{proposition}\label{prop:KK formula}
{\bf (Kostant-Kumar)} For each $i,j$ with $1 \leq i,j \leq n-1$ we have
\begin{itemize} 
\item if $i \neq j$ then
\[s_i \cdot \sigma_{s_j} = \sigma_{s_j},\]
\item if $i=j$ then
\[
s_j \cdot \sigma_{s_j} = \begin{cases} 
\alpha_j \sigma_e - \sigma_{s_j} + \sigma_{s_{j+1}} \quad \quad \quad \quad \quad \textup{if } j=1,\\
\alpha_j \sigma_e - \sigma_{s_j} + \sigma_{s_{j-1}} +
\sigma_{s_{j+1}} \quad \hspace{0.5em} \textup{if } j=2,3,\ldots, n-2, \\
\alpha_j \sigma_e - \sigma_{s_j} + \sigma_{s_{j-1}} \quad \quad \quad \quad \quad  \textup{if } j=n-1,
\\
\end{cases}
\]
\item and for all $w \in S_n$ 
\[w \sigma_e = \sigma_e.\]
\end{itemize} 
\end{proposition}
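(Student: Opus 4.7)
My plan is to verify each identity by expanding both sides in the equivariant Schubert basis of $H^*_T(GL(n,\C)/B;\C)$ and matching values at a small set of $T$-fixed points, using injectivity of the restriction map $\iota^*$ to detect equality.

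The identity $w \cdot \sigma_e = \sigma_e$ is immediate from the definition $(w \cdot \sigma)(u) = \sigma(uw)$: since $\sigma_e(u)=1$ for every $u \in S_n$, we have $(w \cdot \sigma_e)(u) = \sigma_e(uw) = 1 = \sigma_e(u)$.

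For the remaining identities, observe that the Kostant-Kumar action preserves cohomological degree, since it merely permutes evaluation points without altering the underlying polynomials. Hence $s_i \cdot \sigma_{s_j}$ is homogeneous of degree $2$. All equivariant Schubert classes $\sigma_w$ with $\ell(w) \geq 2$ have cohomological degree $\geq 4$, so we may write uniquely
\[
s_i \cdot \sigma_{s_j} \;=\; p \, \sigma_e \;+\; \sum_{k=1}^{n-1} c_k \, \sigma_{s_k}
\]
with $p \in \t^*$ and $c_k \in \C$. The coefficient $p$ is recovered from evaluation at $e$ and each $c_k$ from evaluation at $s_k$, using $\sigma_e(u)=1$ and $\sigma_{s_k}(s_\ell) = \alpha_k \delta_{k\ell}$. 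Since $(s_i \cdot \sigma_{s_j})(e) = \sigma_{s_j}(s_i)$ and $(s_i \cdot \sigma_{s_j})(s_\ell) = \sigma_{s_j}(s_\ell s_i)$, the whole verification reduces to reading off these specific values from the Billey-formula list recorded in the hint preceding the proposition.

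For $i \neq j$ one finds $\sigma_{s_j}(s_i) = 0$, and $\sigma_{s_j}(s_\ell s_i)$ vanishes when $\ell = i$ (because then $s_\ell s_i = e$) and when $\ell \notin \{i, j\}$, while it equals $\alpha_j$ when $\ell = j$; the resulting diagonal linear system gives $p = 0$, $c_j = 1$, and $c_k = 0$ otherwise, hence $s_i \cdot \sigma_{s_j} = \sigma_{s_j}$. For $i = j$ one finds $p = \sigma_{s_j}(s_j) = \alpha_j$, and $\sigma_{s_j}(s_\ell s_j)$ equals $0$ for $\ell = j$, equals $\alpha_{\ell} + \alpha_j$ for $\ell = j \pm 1$, and equals $\alpha_j$ otherwise; solving for the $c_k$ yields $c_j = -1$, $c_{j\pm 1} = 1$ whenever those indices exist, and $c_k = 0$ otherwise, reproducing the three cases of the formula. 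The boundary cases $j=1$ and $j=n-1$ are handled automatically, since the nonexistent indices $s_0$ and $s_n$ simply never appear in the verification.

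I expect the main obstacle to be the case-by-case invocation of Billey's formula for $\sigma_{s_j}(s_\ell s_i)$, especially keeping track of when $\ell$ and $i$ commute with $j$ versus when they give a non-commuting pair and introduce an extra $\alpha_{j \pm 1}$ summand; but this is a finite enumeration with no structural difficulty beyond careful bookkeeping.
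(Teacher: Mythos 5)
Your proof is correct, and it carries out the Billey-formula verification that the paper merely sketches as an alternative to citing Kostant--Kumar's general result \cite[Proposition 4.24.g]{KosKum86} directly. The steps you add to the paper's hint---noting that the action $(w\cdot\sigma)(u)=\sigma(uw)$ permutes coordinates and hence preserves degree, expanding the degree-$2$ class uniquely as $p\,\sigma_e+\sum_k c_k\,\sigma_{s_k}$, and then pinning down $p$ and each $c_\ell$ by evaluating at $e$ and at $s_\ell$ via $\sigma_{s_k}(s_\ell)=\alpha_k\delta_{k\ell}$---are exactly the bookkeeping needed to turn the listed Billey values into the stated formulas, so this is the same route the paper points the reader toward.
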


Our choice of $S^1$ induces the linear projection \(\t^* \to
\Lie(S^1)^*\) which sends the simple roots $\alpha_i$ to $t$, where
$t$ denotes the polynomial variable in $\Sym(\Lie(S^1)^*)$.  The
following corollary is immediate from this observation together with
the formulae in Proposition~\ref{prop:KK formula}.

  \begin{corollary}\label{corollary:KK on flags}
  The Kostant-Kumar action 
  of $S_n$ on $H^*_{S^1}(GL_n(\mathbb{C})/B;\C)$  preserves the $H^*_{S^1}(\pt;\C)$-submodule
  that is spanned by the images of the classes $\{\sigma_e, \sigma_{s_1}, \sigma_{s_2}, \ldots,
  \sigma_{s_{n-1}}\}$. 
  \end{corollary}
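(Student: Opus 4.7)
The plan is to reduce the claim to a finite verification using the formulas supplied by Proposition~\ref{prop:KK formula}. Let $M \subseteq H^*_{S^1}(GL_n(\mathbb{C})/B;\C)$ denote the $H^*_{S^1}(\pt;\C)$-submodule spanned by the images of $\sigma_e, \sigma_{s_1}, \ldots, \sigma_{s_{n-1}}$. Since the simple reflections $s_1, \ldots, s_{n-1}$ generate $S_n$, it will suffice to show $s_i \cdot M \subseteq M$ for each $i$; the general case then follows by induction on the length of a reduced word for $w \in S_n$.

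First I would record the $H^*_{S^1}(\pt;\C)$-linearity of the Kostant--Kumar action. This is immediate from the componentwise defining formula~\eqref{eq:def K-K action}: for any scalar $c \in H^*_{S^1}(\pt;\C)$, viewed as a constant element of the product $\prod_{u \in S_n} H^*_{S^1}(\pt;\C)$, and for any class $\sigma$, one has $(w \cdot (c\sigma))(u) = c \cdot \sigma(uw) = c \cdot (w \cdot \sigma)(u)$. In light of this linearity, it is enough to verify that $s_i \cdot \sigma_{s_j} \in M$ for all $j \in \{1, \ldots, n-1\}$ and that $s_i \cdot \sigma_e \in M$.

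Second, I would simply read off these particular values from Proposition~\ref{prop:KK formula} and apply the ring projection $H^*_T(\pt;\C) \to H^*_{S^1}(\pt;\C)$ induced by $S^1 \hookrightarrow T$, which sends each simple root $\alpha_k$ to the generator $t$. The cases $i \neq j$ and the identity-class case are trivial, while the case $i = j$ yields a combination of the shape $t\,\sigma_e - \sigma_{s_j} + \sigma_{s_{j \pm 1}}$, with the obvious endpoint modifications at $j = 1$ and $j = n-1$. Each of these expressions is manifestly an $H^*_{S^1}(\pt;\C)$-linear combination of the generators of $M$.

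I do not anticipate a substantive obstacle; the corollary is essentially a direct inspection of Kostant--Kumar's formula under the specialization $\alpha_k \mapsto t$. The only subtlety I would be careful to articulate is that the Kostant--Kumar action genuinely descends to $H^*_{S^1}(GL_n(\mathbb{C})/B;\C)$---a point already noted in the text preceding the corollary---so that the computations of Proposition~\ref{prop:KK formula} (stated over $H^*_T$) can be transported intact and interpreted in $H^*_{S^1}$ without ambiguity.
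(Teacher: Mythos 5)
Your proposal is correct and is essentially the same argument as the paper's: both rest on the $H^*(\pt)$-linearity of the Kostant--Kumar action (which follows from its componentwise definition) together with the explicit formulas of Proposition~\ref{prop:KK formula}, specialized via $\alpha_k \mapsto t$. The only cosmetic difference is that you verify linearity directly at the $S^1$-level, whereas the paper notes the $T$-level statement first and then pushes the span forward along the surjections; both routes are sound and reach the same conclusion.
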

  
  \begin{proof}
    By definition, Kostant-Kumar's  action of each $w\in S_n$ is an
    $H^*_T(\pt;\C)$-module homomorphism, in the sense that if $f \in
    \Sym(\t^*)$ and $\sigma \in H^*_T(GL_n(\C)/B; \C)$ then $w \cdot
    (f\sigma) = (f) (w \cdot \sigma)$.    
    Proposition~\ref{prop:KK formula}
    thus implies that the $H^*_T(\pt;\C)$-span of $\{\sigma_e,
    \sigma_{s_1}, \sigma_{s_2}, \ldots, \sigma_{s_{n-1}}\}$ is an
    $S_n$-subrepresentation of $H^*_T(GL_n(\C)/B; \C)$.  The ring
    homomorphism $H^*_T(\pt;\C) \rightarrow H^*_{S^1}(\pt;\C)$ is
    a surjection and the additive homomorphism \\ $H^*_T(GL_n(\C)/B; \C) \rightarrow
    H^*_{S^1}(GL_n(\C)/B; \C)$ 
    respects multiplication in the sense of
    Equation~\eqref{eq:module-twistedring}.  Hence the images of the
    classes $\{\sigma_e, \sigma_{s_1}, \sigma_{s_2}, \ldots,
    \sigma_{s_{n-1}}\}$ span an $H^*_{S^1}(\pt; \C)[S_n]$-submodule of
    $H^*_{S^1}(GL_n(\C)/B; \C)$.
 \end{proof}

As a consequence, we deduce that the formulae given in the following
corollary give well-defined actions on the ordinary and 
equivariant cohomology rings of the subregular Springer
varieties. For
each $w \in S_n$ we denote by $p_w$ the image in
$H^*_{S^1}(\mathcal{S}_N;\C)$ of $\sigma_w$ under the
left vertical arrow of~\eqref{eqn:Springer diagram}.

\begin{corollary}\label{corollary: KK on Springers} 
  Let $N$ be a subregular nilpotent linear operator $N: \C^n \to \C^n$
  and let $\mathcal{S}_N$ denote its associated subregular Springer
  variety.  Kostant-Kumar's $S_n$-action on $H^*_{T}(G/B;\C)$,
  described in Proposition~\ref{prop:KK formula}, naturally induces an
  $S_n$-representation on $H^*_{S^1}(\mathcal{S}_N;\C)$ as
  follows. For each $i,j$ with \(1 \leq i, j \leq n-1\) define: 
\begin{itemize} 
\item if $i \neq j$ then
\[
s_i \cdot p_{s_j} = p_{s_j},
\]
\item if $i=j$ then
\[
s_j \cdot p_{s_j} = \begin{cases} 
t p_e - p_{s_j} + p_{s_{j+1}} \quad \quad \quad \quad \quad  \textup{if } j=1, \\ 
tp_e - p_{s_j} + p_{s_{j-1}} + p_{s_{j+1}} \quad \hspace{0.5em} \textup{if } j=2,3,\ldots, n-2, \\
t p_e - p_{s_j} + p_{s_{j-1}}  \quad \quad \quad \quad \quad  \textup{if } j=n-1, \\
\end{cases}
\]
\item for all \(w \in S_n\)
\[
w \cdot p_e = p_e.
\]
\end{itemize}
This is a well-defined $S_n$-action on
$H^*_{S^1}(\mathcal{S}_N;\C)$. 
Morever, this action 
induces a well-defined $S_n$-representation on the ordinary cohomology
\[H^*(\mathcal{S}_N;\C) \cong H^*_{S^1}(\mathcal{S}_N;\C)/(t)H^*_{S^1}(\mathcal{S}_N;\C)\]
by setting $t=0$ in the previous formulae.
\end{corollary}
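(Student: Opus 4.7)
The plan is to transport Kostant--Kumar's $S_n$-action from a carefully chosen subrepresentation inside $H^*_{S^1}(GL_n(\C)/B;\C)$ to $H^*_{S^1}(\mathcal{S}_N;\C)$ via the pinball basis of Theorem~\ref{theorem: pinball for subregular}. Let $V$ denote the $H^*_{S^1}(\pt;\C)$-submodule of $H^*_{S^1}(GL_n(\C)/B;\C)$ spanned by the images of $\{\sigma_e,\sigma_{s_1},\ldots,\sigma_{s_{n-1}}\}$ under the forgetful map $H^*_T\to H^*_{S^1}$. Since the equivariant Schubert classes form a free $H^*_{S^1}(\pt;\C)$-basis of $H^*_{S^1}(GL_n(\C)/B;\C)$, the submodule $V$ is free of rank $n$ on these generators, and by Corollary~\ref{corollary:KK on flags} it is preserved by the Kostant--Kumar $S_n$-action.

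First I would observe that the restriction $\pi|_V:V\to H^*_{S^1}(\mathcal{S}_N;\C)$ of the natural ring map $\pi$ (sending $\sigma_w\mapsto p_w$) is an isomorphism of free $H^*_{S^1}(\pt;\C)$-modules: by Theorem~\ref{theorem: pinball for subregular}, the images $\{p_e,p_{s_1},\ldots,p_{s_{n-1}}\}$ form an $H^*_{S^1}(\pt;\C)$-basis of $H^*_{S^1}(\mathcal{S}_N;\C)$, so $\pi|_V$ sends a free basis to a free basis. I then transport the $S_n$-action from $V$ to $H^*_{S^1}(\mathcal{S}_N;\C)$ via this isomorphism; since $V$ is already a subrepresentation, all $S_n$-relations (quadratic, commutation, and braid) are automatically satisfied and one obtains a bona fide $S_n$-representation on $H^*_{S^1}(\mathcal{S}_N;\C)$. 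The explicit formulas in the corollary then follow from Proposition~\ref{prop:KK formula} applied to $\sigma_e$ and $\sigma_{s_j}$, combined with the fact (recorded in the proof of Proposition~\ref{proposition: Billey for nilps}) that the base-change map $\Sym_{\F}(\t^*)\to\Sym_{\F}(\Lie(S^1)^*)$ sends every simple root $\alpha_j$ to the variable $t$.

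For the descent to ordinary cohomology, I would verify that Kostant--Kumar's action on $H^*_T(GL_n(\C)/B;\C)$ is $H^*_T(\pt;\C)$-linear: from the defining formula~\eqref{eq:def K-K action}, for any $f\in H^*_T(\pt;\C)$ we compute componentwise that $(w\cdot(f\sigma))(u)=(f\sigma)(uw)=f\cdot\sigma(uw)=(f(w\cdot\sigma))(u)$. Naturality under $S^1\hookrightarrow T$ shows the transported $S_n$-action on $H^*_{S^1}(\mathcal{S}_N;\C)$ is $H^*_{S^1}(\pt;\C)=\C[t]$-linear, so it preserves the ideal $(t)H^*_{S^1}(\mathcal{S}_N;\C)$ and descends to an $S_n$-representation on the quotient $H^*_{S^1}(\mathcal{S}_N;\C)/(t)H^*_{S^1}(\mathcal{S}_N;\C)\cong H^*(\mathcal{S}_N;\C)$, with formulas obtained by setting $t=0$. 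The main point is conceptual rather than an obstacle: Corollary~\ref{corollary:KK on flags} isolates a finite-rank $S_n$-invariant subspace, while Theorem~\ref{theorem: pinball for subregular} identifies its image with all of $H^*_{S^1}(\mathcal{S}_N;\C)$; once these two inputs are in place, the verification of the formulas and of the descent is essentially formal.
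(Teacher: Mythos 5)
Your proof is correct and follows essentially the same strategy as the paper: isolate the $S_n$-invariant finite-rank submodule using Proposition~\ref{prop:KK formula} (equivalently Corollary~\ref{corollary:KK on flags}), identify it isomorphically with $H^*_{S^1}(\mathcal{S}_N;\C)$ via the pinball basis of Theorem~\ref{theorem: pinball for subregular}, transport the action, and descend to ordinary cohomology by $\C[t]$-linearity. You phrase the isomorphism as a map between $H^*_{S^1}(\pt;\C)$-modules $V \subseteq H^*_{S^1}(GL_n(\C)/B;\C)$ and $H^*_{S^1}(\mathcal{S}_N;\C)$, whereas the paper speaks of the $H^*_T(\pt;\C)$-submodule of $H^*_T(GL_n(\C)/B;\C)$ mapping onto $H^*_{S^1}(\mathcal{S}_N;\C)$; your formulation sidesteps the change-of-rings subtlety and is slightly cleaner, and you also spell out the componentwise verification of $H^*_T(\pt;\C)$-linearity of the Kostant--Kumar action, which the paper records but does not re-derive in this proof. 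These are minor refinements; the substance of the argument is the same.
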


\begin{proof}
From Theorem~\ref{theorem: pinball for subregular} we know that
$H^*_{S^1}(\mathcal{S}_N;\C)$ is a free $H^*_{S^1}(\pt;\C)$-module with
basis $\{p_e, p_{s_1}, \ldots, p_{s_{n-1}}\}$.  In addition
$H^*_{T}(GL(n,\C)/B;\C)$ is a free
$H^*_{T}(\pt;\C)$-module with module basis given by the equivariant Schubert classes $\{\sigma_w\}_{w \in S_n}$. 
By Proposition~\ref{prop:KK formula},  Kostant-Kumar's 
  action preserves
  the $H^*_{T}(\pt;\C)$-submodule of $H^*_{T}(GL(n,\C)/B;\C)$ generated
  by the degree-$0$ and degree-$2$ classes \(\{\sigma_e, \sigma_{s_1},
  \ldots, \sigma_{s_{n-1}}\}\).  By definition of the classes $\{p_w\}$, this submodule maps 
  isomorphically onto $H^*_{S^1}(\mathcal{S}_N;\C)$ under the
 natural map 
 \[H^*_{T}(GL(n,\C)/B;\C) \to H^*_{S^1}(\mathcal{S}_N;\C).\] 
 The action of $S_n$ on $H^*_{S^1}(\mathcal{S}_N;\C)$ is defined via this
  isomorphism. The explicit formulas to be proven follow immediately from
  Proposition~\ref{prop:KK formula} and the definition of the classes $p_w$.

  By Proposition~\ref{prop:GKM compatibility for Springer}  the
  $S^1$-equivariant cohomology of $\mathcal{S}_N$ is a free
  $H^*_{S^1}(\pt;\C)$-module.
Let $M = H^*_{S^1}(\mathcal{S}_N;\C)$ denote the $S^1$-equivariant
cohomology of the Springer variety $\mathcal{S}_N$ considered as an
$H^*_{S^1}(\pt;\C)$-module and let $t$ be
the degree $2$ polynomial variable in $H^*_{S^1}(\pt;\C) \cong \C[t]$.  The ordinary cohomology of
$\mathcal{S}_N$ is isomorphic to the quotient $M/(t)M$ \cite[Equation
1.2.4]{GKM}.
The
$H^*_{S^1}(\pt;\C)$-module structure on the quotient factors through
$\C$ via the ring homomorphism $H^*_{S^1}(\pt;\C) \cong \C[t] \to \C$
taking $t$ to $0$. In particular the images of the $H^*_{S^1}(\pt;\C)$-module
basis $\{p_e, p_{s_1}, \ldots, p_{s_{n-1}}\}$ in
$H^*_{S^1}(\mathcal{S}_N;\C)$ form a $\C$-module basis for
$H^*(\mathcal{S}_N;\C)$. The $S_n$-action defined on
the free $H^*_{S^1}(\pt;\C)$-module $H^*_{S^1}(\mathcal{S}_N;\C)$ is
$H^*_{S^1}(\pt;\C)$-linear and thus, via the quotient map taking $t$ to $0$, yields a well-defined action on
the free $\C$-module $H^*(\mathcal{S}_N;\C)$ as desired. 
\end{proof}

We refer to the $S_n$-actions on both
$H^*_{S^1}(\mathcal{S}_N;\C)$ and $H^*(\mathcal{S}_N;\C)$ 
as \textbf{Kostant-Kumar representations}. 
We now compute the character of the Kostant-Kumar
representation on the complex vector spaces $H^{2i}(\mathcal{S}_N;\C)$, denoted by $\psi^i$.  We then compare the
Kostant-Kumar representation with the Springer representation.

\begin{proposition}\label{prop:trace of Springer} 
Let $N$ be a subregular nilpotent linear operator $N: \C^n \to \C^n$ with Springer variety
$\mathcal{S}_N$. 
Let \(\psi^i: W \to \Z \) denote the character of the Kostant-Kumar  representation on $H^{2i}(\mathcal{S}_N;\C)$. Then for each $w\in S_n$
\[ 
\psi^1(w) = \# \{\textup{ fixed points of } w\} -1 = \#\{j \in \{1,2,\ldots,n\}:
w(j)=j \} - 1.
\]
The $S_n$-representation on $H^0(\mathcal{S}_N;\C)$ is the trivial
representation, hence for each $w \in S_n$
\[
\psi^0(w)  = 1.
\]
\end{proposition}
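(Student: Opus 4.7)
The plan is to compute the characters $\psi^0$ and $\psi^1$ directly from the explicit $S_n$-action formulas in Corollary~\ref{corollary: KK on Springers}, specialized to the quotient $H^*(\mathcal{S}_N;\C) \cong H^*_{S^1}(\mathcal{S}_N;\C)/(t)$ by setting $t = 0$. By Theorem~\ref{theorem: pinball for subregular}, the images of $\{p_e, p_{s_1}, \ldots, p_{s_{n-1}}\}$ form a $\C$-basis of $H^*(\mathcal{S}_N;\C)$, with $p_e$ spanning $H^0$ and $\{p_{s_1}, \ldots, p_{s_{n-1}}\}$ spanning $H^2$, since the Schubert class $p_w$ inherits degree $2\ell(w)$.

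The degree zero case is immediate: Corollary~\ref{corollary: KK on Springers} gives $w \cdot p_e = p_e$ for every $w \in S_n$, so the representation on $H^0(\mathcal{S}_N;\C)$ is trivial and $\psi^0(w) = 1$.

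For the degree two case, my strategy is to exhibit an $S_n$-equivariant isomorphism between $H^2(\mathcal{S}_N;\C)$ and the standard reflection representation $V_{\mathrm{std}} := \C^n / \C \cdot (1, 1, \ldots, 1)$. Let $\omega_j := [e_1 + e_2 + \cdots + e_j] \in V_{\mathrm{std}}$ denote the class of the $j$-th fundamental weight, with the convention $\omega_0 = 0$; note that $\omega_n = 0$ as well, since $e_1 + \cdots + e_n$ is the vector quotiented out. I will define $\Phi : H^2(\mathcal{S}_N;\C) \to V_{\mathrm{std}}$ by $\Phi(p_{s_j}) := \omega_j$, and verify $S_n$-equivariance by checking the relation on the generators $s_i$. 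For $i \neq j$, both $s_i \cdot \omega_j = \omega_j$ and $s_i \cdot p_{s_j} = p_{s_j}$, so $\Phi$ commutes. For $i = j$, a direct computation gives $s_j \cdot \omega_j = \omega_{j-1} - \omega_j + \omega_{j+1}$, matching the formula $s_j \cdot p_{s_j} = -p_{s_j} + p_{s_{j-1}} + p_{s_{j+1}}$ (at $t = 0$) uniformly across the interior case and the boundary cases $j = 1$ and $j = n-1$, thanks to the conventions $\omega_0 = \omega_n = 0$.

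Once the $S_n$-isomorphism $H^2(\mathcal{S}_N;\C) \cong V_{\mathrm{std}}$ is established, the character formula follows from the standard computation that the trace of $w$ acting on $\C^n$ by permutation equals $\#\{j : w(j) = j\}$, while the trace of $w$ on the invariant line $\C \cdot (1, 1, \ldots, 1)$ equals $1$; subtracting yields $\psi^1(w) = \#\mathrm{fix}(w) - 1$. I expect the main technical obstacle to be the uniform verification of $\Phi$'s equivariance at the boundary cases $j = 1$ and $j = n-1$, but this reduces to a straightforward calculation once the conventions $\omega_0 = \omega_n = 0$ are adopted.
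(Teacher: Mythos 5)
Your proof is correct but takes a genuinely different route from the paper's. The paper computes $\psi^1$ directly: it uses that the character is a class function, picks a representative of each conjugacy class written as a product of disjoint cycles, and (via the explicit formulas in Corollary~\ref{corollary: KK on Springers}) tracks the diagonal coefficient of $w \cdot p_k$ in the basis $\{p_{s_1},\ldots,p_{s_{n-1}}\}$ cycle-by-cycle, showing that the contribution to the trace is $+1$ exactly for each fixed point of $w$ not equal to a boundary artifact, and $-1$ net; summing gives $\#\mathrm{fix}(w) - 1$. Your proof instead identifies the module: you define $\Phi(p_{s_j}) = \omega_j$ into the reflection representation $\C^n / \C(1,\ldots,1)$, verify equivariance on the Coxeter generators $s_i$ (which suffices, as both sides are genuine linear group actions), and observe that the conventions $\omega_0 = \omega_n = 0$ make the boundary cases $j=1, n-1$ of the Kostant--Kumar formula collapse to the uniform identity $s_j\cdot\omega_j = \omega_{j-1} - \omega_j + \omega_{j+1}$. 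Since the $\omega_j$ form a basis of the $(n-1)$-dimensional standard representation, you obtain an $S_n$-isomorphism and read off the character as $\#\mathrm{fix}(w) - 1$. The trade-off: the paper's argument is self-contained and requires nothing beyond the formulas it has already derived, while your argument is shorter and more conceptual---it actually proves the stronger statement that $H^2(\mathcal{S}_N;\C)$ \emph{is} the standard representation, rather than merely that it has the same character (though over $\C$ these are equivalent)---and it would have made the paper's subsequent appeal to Garsia--Procesi and Corollary~\ref{corollary:trace} essentially redundant. Both approaches are complete and correct.
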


\begin{proof}
For the purposes of this proof we
use cycle notation for permutations, so
e.g. $(1,2,3,4)$ sends $1$ to $2$, $2$ to
$3$, $3$ to $4$, and $4$ to $1$. Each element of $S_n$ may be written as
a product of disjoint cycles, where the product is denoted by concatenation.
The character is a class function, so it suffices to compute
$\psi^1(w)$ on a representative of each
conjugacy class. Thus we may assume without loss of generality that $w$ has the form 
\begin{equation}\label{eq:w decomp into cycles}
w = (1, 2, \ldots, \mu_1)(\mu_1+1, \mu_1+2, \ldots, \mu_2) \cdots (\mu_{j-1}+1, \mu_{j-1}+2,
\ldots, \mu_j=n),
\end{equation}
for some $\mu_1 < \mu_2 < \cdots < \mu_j$ where cycles may
have length $1$.

Choose $a,b$ with $1 \leq a < b \leq n$.  A cycle $(a, a+1, \ldots, b-1, b)$  of length
at least $2$ has reduced word decomposition \(s_a
s_{a+1} \cdots s_{b-1}.\) Using this word and
the formula in Corollary~\ref{corollary: KK on Springers} we easily check that
\begin{equation}\label{eq:action under cycle}
(a, a+1, \ldots, b-1, b) \cdot p_k = 
\begin{cases} 
-p_k + \left( \textup{ $\Z$-linear combination of  } \{p_j\}_{j \neq k} \right), \quad \quad \textup{if }
  k=a, \\
\textup{ $\Z$-linear combination of } \{p_j\}_{j \neq k}, \quad \quad \quad \quad \hspace{2em}
\textup{if } a+1 \leq k \leq b-1, \\
p_k + \left( \textup{ $\Z$-linear combination of } \{p_j\}_{j \neq k}
\right), \quad \quad \hspace{0.75em} \textup{else.}
\end{cases}
\end{equation}
For any $a = 1,2,\ldots,n$, a cycle $(a)$ of length $1$ corresponds to
a fixed point of $w$. The cycle $(a)$ also denotes the identity element in
$S_n$ so 
\begin{equation}\label{eq: action under singleton}
(a) \cdot p_k = p_k \textup{ for all } k=1, 2, \ldots, n-1.
\end{equation}
For each $k=1,2,\ldots,n-1$ consider the basis element $p_k$. The index $k$
 appears in precisely one of the cycles in 
Equation~\ref{eq:w decomp into cycles}. From Equations~\eqref{eq:action under
  cycle} and~\eqref{eq: action under singleton} we conclude that, as desired,
\[
\psi^1(w) = \textup{ (number of cycles of length $1$) }
-1  = \# \{ j \in \{1,2,\ldots, n\}: w(j) = j \}  - 1.
\]
Finally, the class $p_e$ generates $H^0(\mathcal{S}_N;\C)$ and $w p_e =
p_e$ for all $w \in S_n$.  So $H^0(\mathcal{S}_N;\C)$ is the trivial
$1$-dimensional representation, and \(\psi^0(w) = 1\)
for all \(w \in S_n.\) 
\end{proof}

Finally, we observe that the Kostant-Kumar  $S_n$-representation on
$H^*(\mathcal{S}_N;\C)$ agrees with the Garsia-Procesi description of
the Springer representation \cite{GarPro92}.  In fact, since
$S_n$-representations are uniquely determined by their characters, the
following is immediate from Corollary~\ref{corollary:trace} and Proposition~\ref{prop:trace of Springer}.

\begin{corollary}\label{corollary:equivariant Springer rep}
Let $N$ be a subregular nilpotent linear operator $N: \C^n \to \C^n$ and let
$\mathcal{S}_N$ denote its associated subregular Springer variety. 
The
  $S_n$-representation on $H^*_{S^1}(\mathcal{S}_N;\C)$ constructed in
  Corollary~\ref{corollary: KK on Springers} lifts the
  classical subregular Springer representation on $H^*(\mathcal{S}_N;\C)$
  to $H^*_{S^1}(\mathcal{S}_N;\C)$ via the homomorphism $H^*_{S^1}(\mathcal{S}_N;\C)
  \to H^*(\mathcal{S}_N;\C)$. 
\end{corollary}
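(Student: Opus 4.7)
The plan is to follow the strategy explicitly hinted at in the text: compare characters and invoke the classical fact that finite-dimensional complex representations of finite groups are determined up to isomorphism by their characters.

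First, I would observe that Corollary~\ref{corollary: KK on Springers} has already done most of the work. It produces a well-defined $S_n$-action on the equivariant cohomology $H^*_{S^1}(\mathcal{S}_N;\C)$ which is $H^*_{S^1}(\pt;\C)$-linear, and it further shows that setting $t=0$ yields a well-defined induced action on the quotient
\[
H^*_{S^1}(\mathcal{S}_N;\C)/(t)H^*_{S^1}(\mathcal{S}_N;\C) \cong H^*(\mathcal{S}_N;\C).
\]
By construction, the natural map $H^*_{S^1}(\mathcal{S}_N;\C) \to H^*(\mathcal{S}_N;\C)$ is $S_n$-equivariant, so the induced action on ordinary cohomology is automatically a lift of \emph{some} $S_n$-representation; the only issue is to identify this induced representation with the classical Springer representation.

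Next, I would compare characters degree by degree. Both the classical Springer representation and the induced Kostant-Kumar representation preserve cohomological degree, so it suffices to check the characters agree on each homogeneous piece $H^{2i}(\mathcal{S}_N;\C)$. For subregular $N$, the only nonzero degrees are $i=0$ and $i=1$, since the basis $\{p_e, p_{s_1},\dots, p_{s_{n-1}}\}$ from Theorem~\ref{theorem: pinball for subregular} has exactly one class in degree $0$ and $n-1$ classes in degree $2$, matching the Betti numbers from Proposition~\ref{prop:Garsia-Procesi}. In degree $0$, Corollary~\ref{corollary:trace} gives $\chi^0(w)=1$ and Proposition~\ref{prop:trace of Springer} gives $\psi^0(w)=1$ for every $w \in S_n$. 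In degree $2$, the same two results both yield
\[
\chi^1(w) \;=\; \#\{j: w(j)=j\} - 1 \;=\; \psi^1(w).
\]
Hence $\chi^i = \psi^i$ for all $i$.

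Finally, since $S_n$ is a finite group and both representations are finite-dimensional over $\C$, Maschke's theorem and the fact that irreducible complex characters form a basis for class functions imply that the two representations are isomorphic on each graded piece, and hence isomorphic as graded $S_n$-representations. Combined with the $S_n$-equivariance of the quotient map $H^*_{S^1}(\mathcal{S}_N;\C)\to H^*(\mathcal{S}_N;\C)$ already established, this proves the Kostant-Kumar action on $H^*_{S^1}(\mathcal{S}_N;\C)$ is a genuine lift of the Springer representation, as claimed. No step looks genuinely difficult here; the only mild subtlety is making sure the character comparison in degree $2$ is phrased in terms of conjugacy-class representatives, which is already handled inside the proof of Proposition~\ref{prop:trace of Springer}.
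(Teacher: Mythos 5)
Your proposal is correct and follows essentially the same argument as the paper: Corollary~\ref{corollary: KK on Springers} supplies the well-defined action on the quotient, and the characters computed in Corollary~\ref{corollary:trace} and Proposition~\ref{prop:trace of Springer} agree degree by degree, so the two graded $S_n$-representations are isomorphic since complex representations of a finite group are determined by their characters. The paper records this in a single sentence; you have simply spelled out the same reasoning more explicitly.
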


\begin{remark} 
The extent to which the constructions in 
Section~\ref{subsec:subregular}  can be generalized
to other classes of Springer varieties is an open question.  
We have preliminary experimental evidence that suggests that 
module bases constructed via poset pinball for other  Springer
varieties are not in general
poset-upper-triangular with respect to Bruhat order. This may make
computations using them more difficult. We leave further
exploration to future work.  
\end{remark}

We conclude the paper with a concrete example of Betti pinball in the
Springer case which does yield a module basis, albeit not a
poset-upper-triangular one. 

\begin{example}\label{example:SprN=4,X=211}
In our last example, we play Betti pinball with
target Betti numbers $\mathbf{b} = (1, 3, 5, 3)$. 
For visual simplicity, not all edges in the poset are drawn above rank $2$.
The reader can verify that the rolldown set
  $\mathcal{R}(\mathcal{I}, \mathcal{J})$ in this case is a union of principal order
  ideals. 
The double lines indicate covering relations in the partial
order which cause the failure of poset-upper-triangularity of the
rolldown set. Specifically,
poset-upper-triangularity fails in this example because of the drop-downs
  labeled \(v_8, v_{10}, v_{11}\), and $v_{12}$ in the table.  
  
  This example is a pinball game arising from the
  Springer variety associated to the nilpotent matrix for the
  partition $(2,1,1)$ of $n=4$.  In this case we can check by direct computation
  that the rolldown set $\mathcal{R}(\mathcal{I}, \mathcal{J})$ yields
  a module basis for the submodule corresponding to $\mathcal{J}$,
even though it is not poset-upper-triangular.

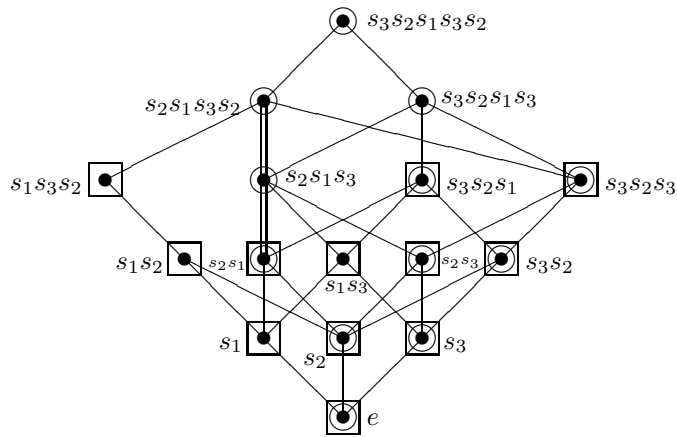
\begin{figure}[h]
\begin{picture}(230,165)(-40,0)
\put(75,0){\circle*{5}}
\multiput(45,30)(30,0){3}{\circle*{5}}
\multiput(15,60)(30,0){5}{\circle*{5}}
\multiput(-15,90)(60,0){2}{\circle*{5}}
\multiput(105,90)(60,0){2}{\circle*{5}}
\multiput(45,120)(60,0){2}{\circle*{5}}
\put(75,150){\circle*{5}}

\put(75,0){\line(-1,1){30}}
\put(75,0){\line(0,1){30}}
\put(75,0){\line(1,1){30}}

\put(45,30){\line(-1,1){30}}
\put(45,30){\line(0,1){30}}
\put(45,30){\line(1,1){30}}
\put(75,30){\line(-1,1){30}}
\put(75,30){\line(-2,1){60}}
\put(75,30){\line(1,1){30}}
\put(75,30){\line(2,1){60}}
\put(105,30){\line(-1,1){30}}
\put(105,30){\line(0,1){30}}
\put(105,30){\line(1,1){30}}

\put(-15,90){\line(1,-1){30}}
\put(165,90){\line(-2,-1){60}}
\put(165,90){\line(-1,-1){30}}
\put(44,90){\line(0,-1){30}}
\put(46,90){\line(0,-1){30}}
\put(45,90){\line(1,-1){30}}
\put(45,90){\line(2,-1){60}}
\put(105,90){\line(1,-1){30}}
\put(105,90){\line(-1,-1){30}}
\put(105,90){\line(-2,-1){60}}

\put(45,120){\line(-2,-1){60}}
\put(44,120){\line(0,-1){30}}
\put(46,120){\line(0,-1){30}}
\put(45,120){\line(4,-1){120}}
\put(105,120){\line(-2,-1){60}}
\put(105,120){\line(0,-1){30}}
\put(105,120){\line(2,-1){60}}
\put(75,150){\line(-1,-1){30}}
\put(75,150){\line(1,-1){30}}

\put(75,0){\circle{10}}
\put(75,30){\circle{10}}
\put(105,30){\circle{10}}
\put(45,60){\circle{10}}
\put(105,60){\circle{10}}
\put(135,60){\circle{10}}
\put(105,90){\circle{10}}
\put(165,90){\circle{10}}
\put(45,90){\circle{10}}
\put(45,120){\circle{10}}
\put(105,120){\circle{10}}
\put(75,150){\circle{10}}

\put(69,-6){\framebox(12,12)}
\multiput(39,24)(30,0){3}{\framebox(12,12)}
\multiput(9,54)(30,0){5}{\framebox(12,12)}
\put(-21,84){\framebox(12,12)}
\put(159,84){\framebox(12,12)}
\put(99,84){\framebox(12,12)}

\put(84,-3){$e$}
\put(113,26){$s_3$}
\put(60,20){$s_2$}
\put(28,26){$s_1$}
\put(-11,56){$s_1s_2$}
\put(144,56){$s_3s_2$}
\put(24,57){\tiny $s_2s_1$}
\put(112,58){\tiny $s_2s_3$}
\put(68,48){\small $s_1s_3$}
\put(-51,86){$s_1s_3s_2$}
\put(174,86){$s_3s_2s_3$}
\put(53,89){$s_2s_1s_3$}
\put(114,86){$s_3s_2s_1$}
\put(0,116){$s_2s_1s_3s_2$}
\put(112,119){$s_3s_2s_1s_3$}
\put(84,148){$s_3s_2s_1s_3s_2$}

\end{picture}
\caption{An example of Betti pinball for which the rolldown set is not
  poset-upper-triangular, but is a 
  union of principal order ideals.}\label{fig:SpringerN=4,YoungX=2,1,1}
\end{figure}

\renewcommand{\arraystretch}{1.3}
\begin{equation}\label{eq:SpringerN=4,YoungX=2,1,1-pinball}
\begin{array}{c||c|c|}
\mbox{pinball step} & w_k & v_k  \\ \hline \hline
1 & w_1 = e = [1, 2, 3, 4] &  v_1 = e = [1, 2, 3, 4] \\ \hline 
2 & w_2 = s_3 = [1, 2, 4, 3] & v_2 = s_3 = [1, 2, 4, 3]  \\ \hline 
3 & w_3 = s_2 = [1, 3, 2, 4]  & v_3 = s_2 = [1, 3, 2, 4]  \\ \hline 
4& w_4 = s_2 s_3 = [1, 3, 4, 2] &  v_4 = s_2 s_3 = [1, 3, 4, 2]  \\ \hline 
5 & w_5 = s_3 s_2 = [1, 4, 2, 3] &  v_5 = s_3 s_2 =  [1, 4, 2, 3] \\ \hline 
6 & w_6 = s_2 s_1 = [3, 1, 2, 4] & v_6 = s_1 = [2, 1, 3, 4] \\ \hline 
7 & w_7 = s_3 s_2 s_3 = [1, 4, 3, 2] &  v_7 = s_3 s_2 s_3 = [1, 4, 3, 2]\\ \hline 
8 & w_8 = s_2 s_1 s_3 = [3, 1, 4, 2] &  v_8 = s_1 s_3 = s_3 s_1 = [2, 1, 4, 3] \\ \hline 
9 & w_{9} = s_3 s_2 s_1 = [4, 1, 2, 3] & v_{9} = s_2 s_1 = [3, 1, 2, 4] \\ \hline 
10 & w_{10} = s_2 s_1 s_3 s_2 = [3, 4, 1, 2] &  v_{10} = s_1 s_2 = [2, 3, 1, 4] \\ \hline 
11 & w_{11} = s_3 s_2 s_1 s_3 = [4, 1, 3, 2] &  v_{11} = s_3 s_2 s_1 =
[4, 1, 2, 3] \\ \hline 
12 & w_{12} = s_3 s_2 s_1 s_3 s_2 = [4, 3, 1, 2] &  v_{12} = s_1 s_3
s_2 = [2, 4, 1, 3] \\ \hline 
\end{array}
\end{equation}

\end{example}

\def\cprime{$'$}


\begin{thebibliography}{10}

\bibitem{AtiBot84}
M.~F. Atiyah and R.~Bott.
\newblock The moment map and equivariant cohomology.
\newblock {\em Topology}, 23(1):1--28, 1984.

\bibitem{BiaBir76}
A.~Bia{\l}ynicki-Birula.
\newblock Some properties of the decompositions of algebraic varieties
  determined by actions of a torus.
\newblock {\em Bull. Acad. Polon. Sci. S\'er. Sci. Math. Astronom. Phys.},
  24(9):667--674, 1976.

\bibitem{Bil99}
S.~Billey.
\newblock Kostant polynomials and the cohomology ring of {G/B}.
\newblock {\em Duke Math. J.}, 96:205--224, 1999.

\bibitem{Bou81}
N.~Bourbaki.
\newblock {\em \'{E}l\'ements de math\'ematique}.
\newblock Masson, Paris, 1981.
\newblock Groupes et alg{\`e}bres de Lie. Chapitres 4, 5 et 6. [Lie groups and
  Lie algebras. Chapters 4, 5 and 6].

\bibitem{Car86}
J.~B. Carrell.
\newblock Orbits of the {W}eyl group and a theorem of {D}e{C}oncini and
  {P}rocesi.
\newblock {\em Compositio Math.}, 60(1):45--52, 1986.

\bibitem{CarKav08}
J.~B. Carrell and K.~Kaveh.
\newblock On the equivariant cohomology of subvarieties of a {$B$}-regular
  variety.
\newblock {\em Transform. Groups}, 13(3-4):495--505, 2008.

\bibitem{ColMcg93}
D.~H. Collingwood and W.~M. McGovern.
\newblock {\em Nilpotent orbits in semisimple {L}ie algebras}.
\newblock Van Nostrand Reinhold Mathematics Series. Van Nostrand Reinhold Co.,
  New York, 1993.

\bibitem{Ful97}
W.~Fulton.
\newblock {\em Young tableaux}, volume~35 of {\em London Mathematical Society
  Student Texts}.
\newblock Cambridge University Press, Cambridge, 1997.
\newblock With applications to representation theory and geometry.

\bibitem{GarPro92}
A.~M. Garsia and C.~Procesi.
\newblock On certain graded {$S_n$}-modules and the {$q$}-{K}ostka polynomials.
\newblock {\em Adv. Math.}, 94(1):82--138, 1992.

\bibitem{GolTol09}
R.~Goldin and S.~Tolman.
\newblock Towards generalizing {S}chubert calculus in the symplectic category,
  April 2009, math.SG/0904.1245.

\bibitem{GKM}
M.~Goresky, R.~Kottwitz, and R.~MacPherson.
\newblock Equivariant cohomology, {K}oszul duality, and the localization
  theorem.
\newblock {\em Invent. Math.}, 131:25--83, 1998.

\bibitem{GorMac10}
M.~Goresky and R.~MacPherson.
\newblock On the spectrum of the equivariant cohomology ring.
\newblock {\em Canad. J. Math.}, 62(2):262--283, 2010.

\bibitem{GS}
V.~W. Guillemin and S.~Sternberg.
\newblock {\em Supersymmetry and equivariant de {R}ham theory}.
\newblock Mathematics Past and Present. Springer-Verlag, Berlin, 1999.

\bibitem{HHH05}
M.~Harada, A.~Henriques, and T.~S. Holm.
\newblock Computation of generalized equivariant cohomologies of {K}ac-{M}oody
  flag varieties.
\newblock {\em Adv. Math.}, 197(1):198--221, 2005.

\bibitem{HarHol05}
M.~Harada and T.~S. Holm.
\newblock The equivariant cohomology of hypertoric varieties and their real
  loci.
\newblock {\em Comm. Anal. Geom.}, 13(3):527--559, 2005.

\bibitem{HarTym09}
M.~Harada and J.~Tymoczko.
\newblock A positive {M}onk formula in the {$S^1$}-equivariant cohomology of
  type {A} peterson varieties, http://arxiv.org/abs/0908.3517.

\bibitem{Hum75}
J.~E. Humphreys.
\newblock {\em Linear algebraic groups}.
\newblock Springer-Verlag, New York, 1975.
\newblock Graduate Texts in Mathematics, No. 21.

\bibitem{IkeNar09}
T.~Ikeda and H.~Naruse.
\newblock Excited {Y}oung diagrams and equivariant {S}chubert calculus.
\newblock {\em Trans. Amer. Math. Soc.}, 361(10):5193--5221, 2009.

\bibitem{Knu08}
A.~Knutson.
\newblock A compactly supported formula for equivariant localization, and,
  simplicial complexes of {B}ialynicki-{B}irula decompositions,
  http://arxiv.org/abs/0801.4092.

\bibitem{KnuTao03}
A.~Knutson and T.~Tao.
\newblock Puzzles and (equivariant) cohomology of grassmannians.
\newblock {\em Duke Math. J.}, 119(2):221--260, 2003.

\bibitem{KosKum86}
B.~Kostant and S.~Kumar.
\newblock The nil {H}ecke ring and cohomology of {$G$}/{$P$} for a
  {K}ac-{M}oody group {$G$}.
\newblock {\em Advances in Math.}, 62:187--237, 1986.

\bibitem{Kre05}
V.~Kreiman.
\newblock Schubert classes in the equivariant {$K$}-theory and equivariant
  cohomology of the {G}rassmannian, http://arxiv.org/abs/math/0512204.

\bibitem{Kum02}
S.~Kumar.
\newblock {\em Kac-{M}oody groups, their flag varieties and representation
  theory}, volume 204 of {\em Progress in Mathematics}.
\newblock Birkh\"auser Boston Inc., Boston, MA, 2002.

\bibitem{LakRagSan06}
V.~Lakshmibai, K.~N. Raghavan, and P.~Sankaran.
\newblock Equivariant {G}iambelli and determinantal restriction formulas for
  the {G}rassmannian.
\newblock {\em Pure Appl. Math. Q.}, 2(3, part 1):699--717, 2006.

\bibitem{May96}
J.~P. May.
\newblock {\em Equivariant homotopy and cohomology theory}, volume~91 of {\em
  CBMS Regional Conference Series in Mathematics}.
\newblock Published for the Conference Board of the Mathematical Sciences,
  Washington, DC, 1996.

\bibitem{Sag91}
B.~E. Sagan.
\newblock {\em The symmetric group}.
\newblock The Wadsworth \& Brooks/Cole Mathematics Series. Wadsworth \&
  Brooks/Cole Advanced Books \& Software, Pacific Grove, CA, 1991.
\newblock Representations, combinatorial algorithms, and symmetric functions.

\bibitem{Slo80}
P.~Slodowy.
\newblock {\em Four lectures on simple groups and singularities}, volume~11 of
  {\em Communications of the Mathematical Institute, Rijksuniversiteit
  Utrecht}.
\newblock Rijksuniversiteit Utrecht Mathematical Institute, Utrecht, 1980.

\bibitem{Spa76}
N.~Spaltenstein.
\newblock The fixed point set of a unipotent transformation on the flag
  manifold.
\newblock {\em Nederl. Akad. Wetensch. Proc. Ser. A {\bf 79}=Indag. Math.},
  38(5):452--456, 1976.

\bibitem{Spr76}
T.~A. Springer.
\newblock Trigonometric sums, {G}reen functions of finite groups and
  representations of {W}eyl groups.
\newblock {\em Invent. Math.}, 36:173--207, 1976.

\bibitem{Sta97}
R.~P. Stanley.
\newblock {\em Enumerative combinatorics. {V}ol. 1}, volume~49 of {\em
  Cambridge Studies in Advanced Mathematics}.
\newblock Cambridge University Press, Cambridge, 1997.
\newblock With a foreword by Gian-Carlo Rota, Corrected reprint of the 1986
  original.

\bibitem{Tym06}
J.~S. Tymoczko.
\newblock Linear conditions imposed on flag varieties.
\newblock {\em Amer. J. Math.}, 128(6):1587--1604, 2006.

\bibitem{Tym07}
J.~S. Tymoczko.
\newblock Paving {H}essenberg varieties by affines.
\newblock {\em Selecta Math. (N.S.)}, 13(2):353--367, 2007.

\bibitem{Tym08}
J.~S. Tymoczko.
\newblock Permutation actions on equivariant cohomology of flag varieties.
\newblock In {\em Toric topology}, volume 460 of {\em Contemp. Math.}, pages
  365--384. Amer. Math. Soc., Providence, RI, 2008.

\bibitem{Wil04}
M.~Willems.
\newblock Cohomologie et {$K$}-th\'eorie \'equivariantes des vari\'et\'es de
  {B}ott-{S}amelson et des vari\'et\'es de drapeaux.
\newblock {\em Bull. Soc. Math. France}, 132(4):569--589, 2004.

\bibitem{Wil06}
M.~Willems.
\newblock Cohomologie \'equivariante des tours de {B}ott et calcul de
  {S}chubert \'equivariant.
\newblock {\em J. Inst. Math. Jussieu}, 5(1):125--159, 2006.

\end{thebibliography}
\end{document}